
\documentclass[noams]{compositio}

\usepackage[mathscr]{eucal}
\usepackage{palatino, mathpazo, amsfonts, mathrsfs, amscd}
\usepackage{amssymb, amsmath}
\usepackage{stackrel}
\usepackage{bm}

\usepackage{tikz-cd}
\usepackage{tikz}
\usetikzlibrary{arrows}
\newtheorem{theorem}{Theorem}[section]
\newtheorem{lemma}[theorem]{Lemma}

\newtheorem{proposition}[theorem]{Proposition}
\newtheorem{corollary}[theorem]{Corollary}

\theoremstyle{remark}
\newtheorem{remark}[theorem]{Remark}

\newtheorem{definition}[theorem]{Definition}
\newtheorem{assumption}[theorem]{Assumption}

\numberwithin{equation}{subsection}

\newcommand{\sslash}{\mathbin{/\mkern-6mu/}}

\newcommand{\spec}{\operatorname{Spec}}

\newcommand{\ul}{\underline}
\newcommand{\wt}{\widetilde}

\newcommand{\cc}[1]{\mathcal{#1}}  
\newcommand{\f}[1]{\mathfrak{#1}}  

\newcommand{\CC}{\mathbb{C}}
\newcommand{\ZZ}{\mathbb{Z}}

\newcommand{\NN}{\mathbb{N}}
\newcommand{\II}{\mathbb{I}}

\newcommand{\PP}{\mathbb{P}}
\newcommand{\QQ}{\mathbb{Q}}

\newcommand{\RR}{\mathbb{R}}

\newcommand{\bq}{\bm{q}}
\newcommand{\bbr}{\bm{r}}

\newcommand{\bt}{{\bm{t}}}

\newcommand{\omlog}{\omega_{\cC, \op{log}}}

\newcommand{\ii}{\mathbb{1}}

\newcommand{\cH}{\cc H}
\newcommand{\cV}{\cc V}

\newcommand{\cC}{\cc C}
\newcommand{\cP}{\cc P}

\DeclareMathOperator{\tot}{tot}

\DeclareMathOperator{\eff}{Eff}

\DeclareMathOperator{\Sym}{Sym}

\newcommand{\br}[1]{\left\langle#1\right\rangle}  
\newcommand{\brr}[1]{\left\langle\!\left\langle#1\right\rangle\!\right\rangle}

\newcommand{\op}[1]{\operatorname{#1}}

\begin{document}

\title{Towards a mirror theorem for GLSMs}
\author{Mark Shoemaker}
\email{mark.shoemaker@colostate.edu}
\address{Colorado State University\\Department of Mathematics\\1874 Campus Delivery\\Fort Collins, CO, USA, 80523-1874
}
%
\classification{
14N35 (primary), 53D45, 14J33 (secondary). }
\keywords{GLSM, gauged linear sigma model, Landau-Ginzburg model, curve counting, LG/CY}
\thanks{This work was partially supported by NSF grant DMS-1708104 and Simons Foundation Travel Grant 958189.
}

\begin{abstract}
We propose a method for computing generating functions of genus zero invariants of a gauged linear sigma model $(V, G, \theta, w)$.  We show that certain derivatives of $I$-functions of quasimap invariants of $[V\sslash_\theta G]$ produce $I$-functions (appropriately defined) of the gauged linear sigma model.  When $G$ is an algebraic torus we obtain an explicit formula for an $I$-function, and check that it agrees with previously computed $I$-functions in known special cases. Our approach is based on a new construction of these invariants which applies whenever the evaluation maps from the moduli space are proper, and includes insertions from \emph{light} marked points, which may collide with each other and with base points.
\end{abstract}

\maketitle

\section{Introduction}
A \emph{gauged linear sigma model} (GLSM) consists, roughly, of a graded complex vector space $V$ acted on by a linear reductive group $G$, together with a choice of character $\theta \in \widehat G$, and a $G$-invariant function $w\colon V \to \CC$.  The data $(V, G, \theta, w)$ together define a GIT quotient $Y := [V\sslash_\theta G]$ and a function $w\colon Y \to \CC$ on the quotient.

GLSMs arise naturally in a number of contexts, for instance as the mirrors to Fano manifolds \cite{G2, HV}, as examples of noncommutative crepant resolutions \cite{Sha, Asp}, and as intermediate spaces appearing in wall crossing and correspondence results \cite{Wi2, Or, FK2}.  GLSMs provide a broad setting in which it is possible to define an enumerative curve counting theory.  They simultaneously generalize the FJRW theory of an isolated singularity (\emph{affine phase} GLSMs) as well as the Gromov--Witten theory of complete intersections (\emph{geometric phase} GLSMs).

Due to their increasingly prominent role in enumerative geometry and mathematical physics, a significant effort has been made over the last decade to provide a rigorous mathematical theory of curve counting invariants for general GLSMs.  
By the efforts of Fan--Jarvis--Ruan \cite{FJR2}, Tian--Xu \cite{TX0, TX2, TX3, TX}, the author together with Ciocan-Fontanine--Favero--Gu\'er\'e--Kim \cite{CFGKS}, Chang--Kiem--Li \cite{CKL, KL}, and Favero--Kim \cite{FKim}, GLSM invariants have now been defined in a variety of contexts and levels of generality.  As evidenced by the number and variety of approaches, the landscape is still developing rapidly.  The relationship between these different definitions is largely unexplored.

Despite the dramatic recent progress in defining GLSM invariants, with the exception of affine phases, geometric phases, and a class of Picard-rank one 
hybrid model phases \cite{ClRo}, there have been very few computations of these new invariants.  In this paper we develop a method for computing genus zero invariants for a large class of GLSMs.  We then define and compute new generating functions of GLSM invariants, which may be viewed as analogous to the big $I$-functions arising in mirror theorems in Gromov--Witten theory \cite{CCIT2, CKbig, CCKbig}.

Our method is based on a direct comparison between the genus zero GLSM invariants of $(V, G, \theta, w)$ and the Gromov--Witten (or more precisely quasimap) invariants of $[V \sslash_\theta G]$.  We prove that $I$-functions for the GLSM invariants of $(V, G, \theta, w)$ arise as derivatives of $I$-functions for the Gromov--Witten invariants of $[V\sslash_\theta G]$.
We expect this formulation to be useful in transporting known results in Gromov--Witten theory to the new setting of GLSMs.  We will explore such applications in future works.

\subsection{$0^+$-stability and light marked points}

In a series of papers, Ciocan--Fontanine-Kim \cite{CCKtoric, CKbig} together with Maulik \cite{CKM} and Cheong \cite{CCKbig} define $\epsilon$-stable quasimap invariants for GIT stack quotients $[V \sslash_\theta G]$, depending on a parameter $\epsilon \in \RR_{>0}$.  The invariants are defined as integrals over moduli spaces $Q_{h, n}^{\epsilon}([V \sslash_\theta G], d)$ which parametrizes a class of \emph{rational} maps to \\ $[V \sslash_\theta G]$ which may contain basepoints of length at most $1/\epsilon$.  
For $\epsilon$ sufficiently large, the invariants are referred to as $\infty$-stable, and coincide with Gromov--Witten invariants.  In the limit as $\epsilon$ approaches $0$, we refer to the invariants as $0^+$-stable quasimap invariants.  It was observed in \cite{CK1} that $0^+$-stable quasimap invariants are particularly relevant to mirror symmetry---in fact they are expected to coincide precisely with  $B$-model invariants.  

In \cite{FJR2}, Fan--Jarvis--Ruan construct moduli spaces of $\epsilon$-stable LG quasimaps, $LGQ_{h, n}^{\epsilon}([V \sslash_\theta G], d)$, used to define GLSM invariants for $(V, G, \theta, w)$.  These moduli spaces and the corresponding invariants also depend on a parameter $\epsilon$.  
As with quasimaps, the $0^+$-stable GLSM invariants are the most directly relevant to mirror symmetry.  

There is  a second reason to focus on $0^+$-stable GLSM invariants.  For most GLSMs, the $\epsilon$-stable invariants are \emph{only defined for $\epsilon = 0^+$}.   The existence of the moduli space $LGQ_{h, n}^{\epsilon}([V \sslash_\theta G], d)$ for $\epsilon > 0^+$ requires the existence of a \emph{good lift} of $\theta$ (see \cite[Definition~3.2.13]{FJR2}).  This condition turns out to be quite restrictive, especially outside the setting of geometric, affine, and hybrid model phases.  Therefore, a study of general GLSMs must necessarily work with $0^+$-stability.  This is the approach we take.

Quasimap invariants can be generalized further to allow for a second class of marked points 
 on the source curve which may coincide with each other and with base points, but which are still distinct from nodes and regular  marked points.  We call these new markings \emph{light} marked points to distinguish them from the usual (heavy) marked points.

In \cite{CKbig} it was observed that by adding insertions from light markings one can compute \emph{big} $I$-functions---generating functions which encode all of the genus zero Gromov--Witten theory of $[V \sslash_\theta G]$.  Light marked points also feature in the work of Losev--Manin \cite{LM} in the study of pencils of formal flat connections.

The GLSM $I$-functions we define also involve invariants with insertions from light markings.
Defining GLSM invariants with light point insertions appears to be a difficult problem in general, however we show that for the particular invariants of interest to us, a simple definition is possible (see Definition~\ref{d:lightinvts} and Remark~\ref{altcomp}).  

\subsection{Comparison of invariants}

Let $(V, G, \theta, w)$ be a GLSM.  In Gromov--Witten theory, the state space for $[V\sslash_\theta G]$ is given by the Chen--Ruan cohomology $H^*_{CR}([V\sslash_\theta G])$.  We will focus on the \emph{compact type} subspace, defined as the image of the natural map from compactly supported cohomology:
$$H^*_{CR,ct}([V\sslash_\theta G]) := \op{im}(H^*_{CR, cs}([V\sslash_\theta G]) \to H^*_{CR}([V\sslash_\theta G])).$$

The state space of the GLSM $(V, G, \theta, w)$ is given by
$$\cc H(Y, w) := H^*_{CR}(Y, w^{+\infty}),$$
where $w^{+\infty} = (\mathfrak{ Re}(w))^{-1}\left( M, \infty \right)$ for $M$ a sufficiently large real number.  There is a compact type subspace $\cc H_{ct}(Y, w) \subseteq \cc H(Y, w)$ as well, defined roughly as the space spanned by classes Poincar\'e dual to proper substacks supported in $w^{-1}(0)$ (Definition~\ref{d:ctglsm}).  There exists a natural map between the GLSM and Gromov--Witten compact type state spaces:
$$\phi^w: \cc H_{ct}(Y, w) \to H^*_{CR,ct}([V\sslash_\theta G]).$$

Our first result is a comparison between two-pointed genus zero GLSM and quasimap invariants with compact type insertions.
\begin{theorem}[Corollary~\ref{c:2pt}]  
Given classes $\gamma_1, \gamma_2 \in \cc H_{ct}(Y, w)$ and $d \in \op{Eff}([V/G])$, 
$$\br{\gamma_1 \psi^{k_1}, \gamma_2\psi^{k_2}}_{0,2,d}^{(Y, w)} = \br{\phi^w(\gamma_1)\psi^{k_1}, \phi^w(\gamma_2)\psi^{k_2}}_{0,2,d}^{Y},$$
where the left hand side is the ($0^+$-stable) GLSM invariant and the right hand side is the ($0^+$-stable) quasimap invariant.
\end{theorem}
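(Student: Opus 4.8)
The plan is to present both invariants as integrals over essentially the same moduli space and then to match the virtual classes and the insertions separately. I would begin by comparing the two moduli problems: the moduli space $LGQ^{0^+}_{0,2,d}$ underlying the GLSM invariants and the quasimap moduli space $Q^{0^+}_{0,2,d}([V\sslash_\theta G], d)$ are both built from $0^+$-stable quasimaps to the stack $[V/G]$ of fixed degree $d \in \op{Eff}([V/G])$ with two heavy markings, so I expect a natural identification (or at least a comparison morphism) between them, with the superpotential $w$ entering not as a constraint on the maps but only through the data used to define the invariants. This reduces the theorem to comparing the two virtual classes and the two integrands against a common space.

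Since $Y=[V\sslash_\theta G]$ is non-compact, the central issue is why either integral is defined, and this is exactly where the compact-type hypothesis does its work. For the right-hand side, $\phi^w(\gamma_i)$ lies in the compact-type cohomology $H^*_{CR,ct}(Y)$---it is Poincar\'e dual to a proper substack---so by the properness of the evaluation maps on the compact-type locus the product $\op{ev}_1^*(\phi^w\gamma_1)\,\op{ev}_2^*(\phi^w\gamma_2)$ has proper support against $[Q^{0^+}_{0,2,d}(Y)]^{\op{vir}}$ and the quasimap invariant is well-defined. For the left-hand side I would invoke the new construction of the GLSM invariant, which integrates the same type of insertion against the GLSM virtual class, again using properness of the evaluation maps to make sense of the pairing. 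The comparison of the two virtual classes is then the heart of the matter.

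I would match the insertions through the explicit description of $\phi^w$: a compact-type class in $\cc H_{ct}(Y,w)=H^*_{CR}(Y,w^{+\infty})$ represented by a cycle supported in $w^{-1}(0)$ is sent by $\phi^w$ to its ordinary compact-type class in $H^*_{CR,ct}(Y)$, and I would check that under the identification of the evaluation maps this converts $\op{ev}_i^*\gamma_i$ into $\op{ev}_i^*\phi^w(\gamma_i)$. The $\psi$-classes are intrinsic to the shared domain curves and require no adjustment, so once the virtual classes and insertions are identified the two correlators coincide term by term.

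The main obstacle I anticipate is the virtual-class comparison: one must show that the class defining the GLSM invariant and the ordinary quasimap virtual class of $Y$ produce the same number when paired with compact-type insertions. Concretely, if the GLSM class is built by cosection localization from $w$, I would use the defining property that the localized class pushes forward to the ordinary virtual class, and then argue that the compact-type support forces the two pairings to agree; the delicate points are controlling the supports so that no contribution is lost where the cosection degenerates, and verifying that the relative cohomology $H^*_{CR}(Y,w^{+\infty})$ feeding the GLSM insertions is faithfully carried by $\phi^w$ into the compactly supported theory of $Y$.
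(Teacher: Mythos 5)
Your overall strategy---identify the two moduli problems, match the virtual classes (using that the cosection-localized class pushes forward to the ordinary one), and match the insertions through $\phi^w$---is the same as the paper's, and the parts about properness of the evaluation maps, compact-type insertions, and the cosection pushforward are on target: they correspond to Lemma~\ref{l:propev}, Proposition~\ref{p:cosvir}, and Proposition~\ref{p:propevinvts}. But there is a genuine gap at your very first step. You assert that $LGQ^{0^+}_{0,2,d}$ and $Q^{0^+}_{0,2}(Y,d)$ ``are both built from $0^+$-stable quasimaps to the stack $[V/G]$,'' so that a natural identification should exist. That is not how the LG quasimap space is defined: it parametrizes principal $\Gamma$-bundles $\cP\to\cC$ equipped with an isomorphism $\xi_*(\cP)\cong\omlog^\circ$ (the R-charge twist), not $G$-bundles, and for general $(g,n)$ the two moduli spaces are genuinely different objects. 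The identification you need is a theorem, not an expectation, and it is precisely the content that makes the result special to genus zero with two heavy markings.

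The missing idea is Lemma~\ref{l:cantriv}: because every fiber of the universal curve over $\f M_{0,2}$ is a chain of $\PP^1$'s with the two markings at the ends (Lemma~\ref{l:chain}), the log-canonical bundle $\omlog$ has degree zero on every component and is \emph{canonically} trivialized by the section with residue $2\pi i$ at the first marking. This trivialization identifies $\f M_{0,2}(B\Gamma,d)^\theta_{\omlog}$ with $\f M_{0,2}(B\Gamma,d)^\theta_{\cc O_\cC}$, and then the extension $0\to\cP_G\to\cP_\Gamma\to\cc O^\circ_\cC\to 0$ lets one pass back and forth between $G$-bundles and $\Gamma$-bundles (Proposition~\ref{p:stackeq}), identifying the spaces of sections, the obstruction theories $\RR\pi_*(\cV_G)^\vee=\RR\pi_*(\cV_\Gamma)^\vee$, and the evaluation maps (Proposition~\ref{p:modint} and the proof of Corollary~\ref{c:2pt}). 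Without this step your comparison morphism does not exist, and your argument would appear to prove the statement for arbitrary $(g,n)$ with proper evaluation maps, which it should not.
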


We can extend the above result to include light marked points.  For the case of quasimaps, because light markings may coincide with base points, insertion classes are pulled back from the cohomology of the stack quotient: $H^*([V/G])$.  In the case of GLSMs, one may define light point insertions as coming from $H^*([ V^J/(G/\langle J\rangle)], w^{+\infty})$, where $J$ is a distinguished element of $G$ determined by the grading on $V$ (Definition~\ref{d:glsm0}).  
We prove the following:
\begin{theorem}[Theorem~\ref{t:complight}] \label{t:t1}
There exists a map 
$$\tau\colon H^*([V/G]) \to H^*([ V^J/(G/\langle J\rangle)], w^{+\infty})$$ such that, for $\gamma_1, \gamma_2 \in \cc H_{ct}(Y, w)$ and $\alpha_1, \ldots, \alpha_k \in H^*([V/G])$, 
\[\br{\gamma_1 \psi^{k_1}, \gamma_2\psi^{k_2}| \tau(\alpha_1), \ldots, \tau(\alpha_k)}_{0,2|k,d}^{(Y, w)} =  \br{\phi^w(\gamma_1)\psi_1^{k_1},\phi^w(\gamma_2)\psi_2^{k_2}| \alpha_1, \ldots, \alpha_k}_{0,2|k,d}^{Y},\]
where the left (resp. right)-hand side denotes the $0^+$-stable GLSM (resp. quasimap) invariant with two heavy point insertions and $k$ light point insertions.
\end{theorem}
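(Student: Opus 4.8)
The plan is to present both invariants as virtual integrals over a single pair of moduli spaces and to push all of the dependence on the superpotential $w$ back onto the two-marked-point comparison of Corollary~\ref{c:2pt}, so that the only genuinely new input is the behavior of the light evaluation maps. Write $\mathfrak Q := Q^{0^+}_{0,2|k}([V\sslash_\theta G],\beta)$ for the quasimap space with two heavy and $k$ light points and $\mathfrak L := LGQ^{0^+}_{0,2|k}([V\sslash_\theta G],\beta)$ for its GLSM counterpart, as in Definition~\ref{d:lightinvts}. The two are joined by the forgetful morphism $\pi\colon \mathfrak L \to \mathfrak Q$ that discards the $p$-fields, and the GLSM virtual class on $\mathfrak L$ is produced from the quasimap one on $\mathfrak Q$ by the cosection (matrix factorization) construction built from $w$. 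The key structural point is that the light points, together with their evaluation morphisms, live on the base curve and are pulled back along $\pi$: light points may collide with base points and with one another, but they impose no condition on the $p$-fields, so they commute with $\pi$ and leave the cosection and the two heavy insertions $\phi^w(\gamma_1),\phi^w(\gamma_2)$ untouched.

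First I would construct $\tau$ from the geometry of the light evaluation targets. On the quasimap side the $i$-th light insertion is pulled back by $\op{ev}_i\colon \mathfrak Q \to [V/G]$, while on the GLSM side the distinguished central element $J$ forces the light evaluation $\widehat{\op{ev}}_i\colon \mathfrak L \to [V^J/(G/\langle J\rangle)]$ into the $J$-fixed sector. These targets are linked by the natural span $[V^J/(G/\langle J\rangle)] \leftarrow [V^J/G] \hookrightarrow [V/G]$, in which the left arrow is the rigidification of the trivially acting $\langle J\rangle$ (an isomorphism on rational cohomology) and the right arrow is the closed immersion of the $J$-fixed locus. I would define $\tau$ as the immersion pullback $H^*([V/G]) \to H^*([V^J/G])$, followed by the inverse of the rigidification isomorphism and the passage into the relative group $H^*([V^J/(G/\langle J\rangle)], w^{+\infty})$, the last step using that $w$ is compatible with the immersion of $V^J$. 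The light evaluation on $\mathfrak L$ factors through $[V^J/G]$ compatibly with $\pi$, so that $\widehat{\op{ev}}_i^{\,*}\tau(\alpha_i)$ and $\pi^*\op{ev}_i^{\,*}\alpha_i$ enter the respective virtual integrals as the same class; this is the mechanism matching the light insertions on the two sides.

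Next I would compare the virtual integrals. Because the light data and the classes $\pi^*\op{ev}_i^{\,*}\alpha_i$ are pulled back along $\pi$ and never meet the $p$-field directions, the cosection-localized class on $\mathfrak L$ pushes forward under $\pi$ exactly as in the two-marked-point case, and the two heavy insertions are translated by $\phi^w$ in precisely the same way; this step is the light-point-decorated version of the argument establishing Corollary~\ref{c:2pt}. Pairing the product of the pulled-back light classes and the heavy classes against $[\mathfrak L]^{\op{vir}}$, using the projection formula along $\pi$ together with the identity of the previous paragraph, then matches the two invariants term by term.

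The main obstacle I anticipate is the interaction of light points with base points in the presence of $w$. A light point may collide with a base point, where the map to $[V\sslash_\theta G]$ degenerates, so one must check that $\widehat{\op{ev}}_i$ is well defined and proper into the fixed sector $[V^J/(G/\langle J\rangle)]$ and that $\tau(\alpha_i)$, as a class in $H^*(-, w^{+\infty})$, pulls back compatibly with the cosection precisely there; this is exactly where the properness hypothesis emphasized throughout the paper is indispensable. Pinning down $\tau$ itself—verifying that the composite through the rigidification gerbe and the immersion of $V^J$ lands in the relative group and is compatible with the compact-type support conditions—is the other delicate point, and I would settle it before carrying out the moduli-theoretic comparison.
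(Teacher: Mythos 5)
There is a structural gap in the central step. You model the relation between the two moduli spaces as a forgetful morphism $\pi\colon \mathfrak L \to \mathfrak Q$ ``discarding the $p$-fields,'' with the GLSM virtual class obtained from the quasimap one by cosection localization, and you then compare the integrals by pushing the cosection-localized class forward along $\pi$ and applying the projection formula. Neither ingredient is available here, and neither is how the left-hand side is defined. The GLSM light-point invariant (Definition~\ref{d:lightinvts}) is an integral against the \emph{non-localized} virtual class of $QLG_{0,2|k}(Y,d)$; no cosection enters. Moreover both spaces are, by construction, two-heavy-marked-point spaces for the product target $Y\times(\PP^0)^k$, so the actual mechanism is Proposition~\ref{p:modint}: in genus zero with exactly two heavy markings, $\omega_{\pi,\op{log}}$ is canonically trivialized on chains of $\PP^1$'s (Lemma~\ref{l:cantriv}), hence $\f M_{0,2}(BG,d)^\theta \cong \f M_{0,2}(B\Gamma,d)^\theta_{\omlog}$, the $\Gamma$-bundle carries no extra data beyond the $G$-bundle, and $QLG_{0,2|k}(Y,d)$ and $Q_{0,2|k}(Y,d)$ are \emph{canonically isomorphic with equal virtual classes}. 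Your $\pi$ is this isomorphism, not a fibration with $p$-field fibers, and the assertion that ``the cosection-localized class on $\mathfrak L$ pushes forward under $\pi$ exactly as in the two-marked-point case'' is precisely the statement that needs proof; the mechanism you describe would not establish it.

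Two smaller points. First, the middle term of your span should be the $\CC^*_R$-fixed locus $V_0$, not the $J$-fixed locus $V^J = V_0\oplus V_{d_w}$: the light evaluation \eqref{e:lightev} factors through the projection $[V/\Gamma]\to[V_0/\Gamma]$, and the paper's $\tau$ is $j'_*\circ (r^*)^{-1}\circ i^*$ with $i\colon [V_0/G]\hookrightarrow [V/G]$ the inclusion and $j'_*$ the Gysin-type \emph{pushforward} from $[V_0/(G/\langle J\rangle)]$ into $H^*([V^J/(G/\langle J\rangle)], w^{+\infty})$; your ``passage into the relative group'' must be this pushforward, not a pullback, and with your choice of $V^J$ in the middle the composite does not agree with the class the light evaluation actually pairs against. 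Second, the properness worry about $\hat{ev}_j$ is moot: the light evaluations land in the \emph{stack} quotient precisely because light points may sit at base points, and the properness emphasized in the paper concerns the heavy evaluations to $IY$, which is already handled by Corollary~\ref{c:2pt}.
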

While the context of this theorem may seem rather restrictive (genus zero with two heavy marked points with compact type insertions), these are \emph{precisely} the set of invariants needed to construct  big $I$-functions for GLSMs (restricted to the compact type subspace).  Theorem~\ref{t:t1}  provides a method for computing these GLSM generating functions.  
\subsection{A mirror theorem for GLSMs}
What is meant precisely by a mirror theorem in Gromov--Witten theory varies across the literature.  Here we state our main results, and explain in what sense they can be viewed as giving a mirror theorem for GLSMs.

The original mirror theorem \cite{G1, LLY} was a correspondence between the $J$-function and $I$-function of the quintic threefold $Q$.  The former is a generating function of Gromov--Witten invariants, while the latter was defined in terms of period integrals on the mirror, and has a combinatorial expression in terms of hypergeometric series.  The original mirror theorem states that $J^Q$ is equal to $I^Q$ after a change of variables and scaling by an invertible function in the Novikov variables.  

A more modern formulation of the mirror theorem uses the language of Givental's symplectic formalism and applies to more general targets \cite{Bro, CCIT2} and to \emph{big} $I$-functions \cite{CKbig, CCKbig, Wang, Zhou}.  In this context, a mirror theorem for $Y$ amounts to finding an explicit function $I^Y$ (known as an $I$-function) which lies on the \emph{overruled Lagrangian cone} $\cc L^Y$.  Given such a statement, one can then recover Gromov--Witten invariants of $Y$ from $I^Y$  via a process of Birkhoff factorization. 

A fundamental result from the theory of  quasimaps is that  $I$-functions for $Y$ may be defined directly in terms of $0^+$-stable quasimap invariants \cite{CK1, CKbig, Zhou}.  More precisely, $I$-functions for $Y$ often take the form
\begin{equation}\label{e:IY1} I^Y = S^{Y}(q, f(\bt), z)P(q, \bt, z),\end{equation}
where $S^Y$ is the $0^+$-stable quasimap generating function in genus zero with two heavy points and arbitrarily many light points:
\begin{equation}\label{e:Sq0} S^Y(q, \bt, z)(\gamma) := \sum_{i \in \tilde I} \tilde \gamma_i \brr{ \frac{\tilde \gamma^i}{z- \psi}, \gamma}_{0, 2}^{Y}(\bt),\end{equation}
$f \in H^*([V/G])[[t^i]]$ is a formal  cohomology-valued function satisfying $f(0) = 0$, and $P(q, \bt, z) \in H^*_{CR}(Y)[[q]][[t^i]][z]$ is some cohomology-valued function with only positive powers of $z$ (Definition~\ref{d:BigI}).

In this paper we take \eqref{e:IY1} as the \emph{definition} of an $I$-function for $Y$.  This definition has the advantage of easily generalizing to the GLSM setting, even in cases where the Lagrangian cone for the GLSM is not defined (i.e. when $\infty$-stable GLSM invariants do not exist).  We define an $I$-function for the GLSM $(V, G, \theta, w)$ to be a function $I^{(Y, w)}$ taking the form
\begin{equation} I^{(Y, w)} = S^{(Y, w)}(q, f(\bt), z)P(q, \bt, z) \end{equation}
where $S^{(Y, w)}(q, f(\bt), z) $ is the $0^+$-stable GLSM generating function in genus zero with two heavy points and arbitrarily many light points analogous to \eqref{e:Sq0}, $f \in H^*([ V^J/(G/\langle J\rangle)], w^{+\infty})[[t^i]]$ is a formal  cohomology-valued function satisfying $f(0) = 0$, and $P(q, \bt, z) $ lies in $ \cc H(Y, w)[[q]][[t^i]][z]$  (Definition~\ref{d:BigIGLSM}).
If $P(q, \bt, z) $ is sufficiently general, 
one can, in theory, recover $S^{(Y, w)}(q, f(\bt), z)$ from an $I$-function $I^{(Y, w)}$ via Birkhoff factorization.   

A particular $I$-function for $Y$, denoted $ \II^Y(q, \bt, z)$, was defined in \cite{CKbig, CCKbig} using localization on the quasimap graph moduli space
(see \eqref{e:I} for the precise definition). 
Our  main result provides a way of obtaining an $I$-function for the GLSM $(V, G, \theta, w)$ in terms of    $ \II^Y(q, \bt, z)$ for $Y = [V\sslash_\theta G]$.
\begin{theorem}[Theorem~\ref{t:dI}] \label{t:dI0}
  Given a class $\rho \in H^*([V/G])$, if
$z \partial_\rho \II^Y(q, \bt, z)$ lies in cohomology of compact type, then $\sigma^w\left(z \partial_\rho \II^Y(q, \bt, z)\right)$ is a GLSM $I$-function for $(V,G, \theta, w)$.
\end{theorem}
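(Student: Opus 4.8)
The plan is to combine the structural properties of the quasimap $I$-function $\II^Y$ with the invariant comparison of Theorem~\ref{t:t1}, reading off the GLSM $I$-function directly from the resulting series. Since $\II^Y$ is by construction an $I$-function for $Y$, I would first write it in the form $\II^Y = S^Y(q, f(\bt), z)\,P(q, \bt, z)$ guaranteed by \eqref{e:IY1}, with $f(0) = 0$ and $P$ having only non-negative powers of $z$. The first step is then to show that applying $z\partial_\rho$ preserves this shape. Differentiating through the change of variables $\bt \mapsto f(\bt)$ and using that $S^Y$ is a fundamental solution of the Dubrovin connection---so that $z\partial_a S^Y(q, u, z) = S^Y(q, u, z)\circ A_a(q,u)$ for a $z$-independent operator $A_a$ given by quantum multiplication in the $u$-coordinates---one obtains
\[
z\partial_\rho \II^Y = S^Y(q, f(\bt), z)\,\widetilde P(q, \bt, z), \qquad \widetilde P := \Big(\sum_a (\partial_\rho f^a)\,A_a\Big) P + z\partial_\rho P ,
\]
and $\widetilde P$ again has only non-negative powers of $z$. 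Thus $z\partial_\rho \II^Y$ is once more a big $I$-function for $Y$; this is the precise sense in which a derivative of a quasimap $I$-function is again an $I$-function.

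Next I would invoke the hypothesis that $z\partial_\rho \II^Y$ takes values in the compact-type cohomology $H^*_{CR,ct}([V\sslash_\theta G])$. This is exactly the condition under which $\sigma^w$ is defined on the series and, more importantly, under which Theorem~\ref{t:t1} becomes applicable: the two heavy insertions encoded by $S^Y(q, f(\bt), z)$ may be taken to be of compact type, while the light background is built from classes in $H^*([V/G])$, on which $\tau$ acts. I would then upgrade Theorem~\ref{t:t1} from an equality of individual invariants to the operator identity
\[
\sigma^w\circ S^Y(q, u, z) = S^{(Y,w)}(q, \tau(u), z)\circ \sigma^w
\]
valid on compact-type inputs, where $S^{(Y,w)}$ is the GLSM generating function analogous to \eqref{e:Sq0}. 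Establishing this requires matching the dual-basis sums defining the two $S$-matrices, hence compatibility of $\sigma^w$ (equivalently $\phi^w$) with the Poincar\'e pairings on the respective compact-type state spaces, the compact-type hypothesis ensuring that only compact-type basis elements contribute.

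Granting the operator identity, the conclusion follows by substitution. Applying $\sigma^w$ to the expression for $z\partial_\rho \II^Y$ and commuting it past $S^Y$ gives
\[
\sigma^w\big(z\partial_\rho \II^Y\big) = S^{(Y,w)}\big(q, \tau(f(\bt)), z\big)\,\sigma^w\big(\widetilde P\big).
\]
Setting $g(\bt) := \tau(f(\bt))$ and $P_w := \sigma^w(\widetilde P)$, one checks that $g(0) = \tau(f(0)) = 0$ since $\tau$ is linear and $f(0)=0$, and that $P_w$ lies in $\cc H(Y,w)[[q]][[t^i]][z]$ with only non-negative powers of $z$ because $\sigma^w$ is $z$-independent. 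By Definition~\ref{d:BigIGLSM} this exhibits $\sigma^w(z\partial_\rho \II^Y)$ as a GLSM $I$-function for $(V, G, \theta, w)$.

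I expect the main obstacle to be the promotion of Theorem~\ref{t:t1} to the operator identity relating $S^Y$ and $S^{(Y,w)}$. The comparison controls only invariants with exactly two heavy compact-type points and arbitrarily many light points, whereas the $S$-matrices are assembled from sums over full cohomology bases together with their Poincar\'e duals. Reconciling these requires showing that the compact-type hypothesis forces the non-compact-type basis contributions to drop out on both sides, and that $\phi^w$ and $\sigma^w$ intertwine the GLSM and Gromov--Witten pairings on compact type so that dual bases correspond; one must also check that the compact-type condition imposed on the \emph{output} $z\partial_\rho\II^Y$ legitimately transfers to the \emph{input} $\widetilde P$ of the $S$-matrix, so that the substitution in the previous paragraph is valid. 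The remaining manipulations---the chain rule through $f$, the Dubrovin-connection identity, and the bookkeeping of $z$-powers---are routine once this pairing-compatibility is in place.
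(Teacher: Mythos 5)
Your overall architecture matches the paper's: (a) express $z\partial_\rho\II^Y$ as $S^Y$ applied to something polynomial in $z$; (b) transfer the compact-type hypothesis from the $I$-function to that polynomial part; (c) commute $\sigma^w$ past $S^Y$ to produce $S^{(Y,w)}$ and read off Definition~\ref{d:BigIGLSM}. Steps (b) and (c) are exactly the paper's Lemma~\ref{l:Ict} (via the inverse operator $L^Y$ and Lemma~\ref{l:ct}) and Proposition~\ref{p:Srel} (which follows from Theorem~\ref{t:complight} together with the pairing compatibilities \eqref{e:prodcomp}--\eqref{e:paircomp} and the compact-type dual-basis form of $S^Y$ in Lemma~\ref{l:ct}); the ``main obstacle'' you flag is thus already resolved in Sections~\ref{s:ctcpp}--\ref{s:GF}, and your worry about the input of the $S$-matrix being compact type is precisely Lemma~\ref{l:Ict}.

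The genuine gap is in step (a). You derive $z\partial_\rho\II^Y = S^Y(q,f(\bt),z)\widetilde P$ with $\widetilde P$ polynomial in $z$ from the identity $z\partial_a S^Y(q,u,z) = S^Y(q,u,z)\circ A_a(q,u)$ with $A_a$ independent of $z$, and you call this routine. But here the derivative is in the \emph{light-point} variables $t^j$ of the $(0^+,0^+)$ quasimap $S$-operator, not the heavy-point variables of Gromov--Witten theory, so this is not the standard Dubrovin connection: establishing a $z$-free $A_a$ would require a quantum product and splitting/WDVV analysis for the moduli $Q_{0,2|k}(Y,d)$ that neither the paper nor the cited references hand you in this form. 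This assertion is in fact the entire content of the theorem's hardest input. The paper avoids it (Proposition~\ref{p:Iderv}) by $\CC^*$-localization on the quasimap graph space: the polynomial part $P_\rho$ is \emph{defined} directly as a graph-space generating function $\sum_i\gamma_i\brr{\gamma^i\otimes p_\infty\,|\,\rho\otimes p_0}^{QG(Y)^{0^+,0^+}}_{0,1|1}(\bt)$, which manifestly has only nonnegative powers of $z$ since it is defined before localization, and localization then identifies it with ${S^Y}^*(q,\bt,-z)(z\partial_\rho\II^Y)$. Note also that for $\II^Y$ one has $f(\bt)=\bt$, so the chain-rule bookkeeping through a general $f$ is unnecessary. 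Your proof would be complete if you either proved the light-variable quantum differential equation or replaced that step with the graph-space residue construction.
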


The above theorem is completely general.  However, in  the abelian case we  can say more.  Let $(V, G, \theta, w)$ be a GLSM for which $G$ is a connected torus.  
Through a nontrivial application of Theorem~\ref{t:dI0},  we obtain an explicit formula for a GLSM  $I$-function under a mild condition on the action of $G$ on $V$.  See Section~\ref{s:GF} for an explanation of notation.
\begin{theorem}[Corollary~\ref{c:Ifctn}]\label{t:tor0} Let $(V, G, \theta, w)$ be a GLSM for which $G$ is a torus ($\cong (\CC^*)^k$).  
Suppose the coordinate $x_i$ on $V = \spec(\CC[x_i]_{i=1}^r)$ has weight $c_i$ with respect to the grading on $V$.  
Let $\CC^*_R$ denote the one-dimensional torus, acting on $V$ with weights $(c_1, \ldots, c_r)$, and let $\Gamma := G \cdot \CC^*_R$.
If $\left(\CC[x_i]_{1\leq i\leq r}\right)^\Gamma = \CC$, then 
\begin{align} \nonumber
\II^{(Y, w)}(\bt, q, z) :=& \sum_{d \in\eff(V, G, \theta)}q^d \exp \left(  \frac{1}{z} \sum_{j=1}^l t^j p_j\left(\eta_k + z\langle d, \eta_k\rangle\right) \right) \\
&\sigma^w\left(  
\frac{\prod_{i| c_i \neq 0,  \langle d, \rho_i\rangle\leq 0} \prod_{ \langle d, \rho_i\rangle \leq \nu\leq 0} (\rho_i + ( \langle d, \rho_i\rangle - \nu)z)}
{\prod_{i| c_i \neq 0, \langle d, \rho_i\rangle> 0} \prod_{ 0 < \nu < \langle d, \rho_i\rangle} (\rho_i + ( \langle d, \rho_i\rangle - \nu)z)}  \nonumber \right. \\
&\left. 
 \frac{\prod_{ i| c_i = 0, \langle d, \rho_i\rangle< 0} \prod_{ \langle d, \rho_i\rangle \leq \nu< 0} (\rho_i + ( \langle d, \rho_i\rangle - \nu)z)}
{\prod_{ i| c_i = 0, \langle d, \rho_i\rangle> 0} \prod_{ 0 \leq \nu < \langle d, \rho_i\rangle} (\rho_i + ( \langle d, \rho_i\rangle - \nu)z)} \ii_{g_d^{-1}} \right) \label{e:IYW0}
\end{align}
is a GLSM $I$-function for $(Y, w)$.
\end{theorem}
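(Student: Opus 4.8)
The plan is to derive \eqref{e:IYW0} from the general statement of Theorem~\ref{t:dI0} by inserting the known closed form of the quasimap big $I$-function $\II^Y$ of the toric target $Y = [V\sslash_\theta G]$ and computing the effect of the operator $z\partial_\rho$ term by term. First I would record the explicit toric quasimap big $I$-function: for $G$ a torus this is produced by localization on the quasimap graph space as in \cite{CKbig, CCKbig} (and recalled in \eqref{e:I}), and its degree-$d$ contribution is $q^d$ times the light-point exponential $\exp\!\big(\frac1z\sum_j t^j p_j(\eta_k + z\langle d,\eta_k\rangle)\big)$, times the box class $\ii_{g_d^{-1}}$, times a product over all coordinates $x_i$ of a single hypergeometric factor. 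After regularization this factor is $\prod_{\langle d,\rho_i\rangle\le\nu<0}(\rho_i+(\langle d,\rho_i\rangle-\nu)z)\big/\prod_{0\le\nu<\langle d,\rho_i\rangle}(\rho_i+(\langle d,\rho_i\rangle-\nu)z)$, that is, precisely the ``$c_i=0$'' factor appearing in \eqref{e:IYW0}, uniformly in $i$.

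Next I would compute $z\partial_\rho\II^Y$. The key point is that on the degree-$d$ summand the operator $z\partial_\rho$ acts by multiplication by $(\rho+\langle d,\rho\rangle z)$: differentiating the light-point exponential in a divisor direction $\rho$ brings down the class $\rho$, while the divisor equation encoded in the shift $\eta_k+z\langle d,\eta_k\rangle$ supplies the $\langle d,\rho\rangle z$ term. Taking $\rho=\rho_i$ for a coordinate with $c_i\neq0$ therefore multiplies the degree-$d$ summand by $(\rho_i+\langle d,\rho_i\rangle z)$, and the index shift $\nu\mapsto\langle d,\rho_i\rangle-\nu$ shows that this inserts exactly the boundary term at $\nu=0$: it restores $(\rho_i+\langle d,\rho_i\rangle z)$ into the numerator when $\langle d,\rho_i\rangle\le0$ and removes it from the denominator when $\langle d,\rho_i\rangle>0$, converting the ``$c_i=0$'' factor into the ``$c_i\neq0$'' factor of \eqref{e:IYW0}. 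Carrying this out once for each $P$-field direction (each $i$ with $c_i\neq0$) accounts for the full extra product $\prod_{i\mid c_i\neq0}(\rho_i+\langle d,\rho_i\rangle z)$; since each such step stays within the hypotheses, this is the ``nontrivial'' iterated application of Theorem~\ref{t:dI0}.

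The main obstacle will be verifying the compact-type hypothesis of Theorem~\ref{t:dI0}, namely that $z\partial_\rho\II^Y$ lies in $H^*_{CR,ct}$. This is exactly where the condition $(\CC[x_i]_{1\le i\le r})^\Gamma=\CC$ is needed: it says $\spec\big((\CC[x_i])^\Gamma\big)$ is a point, so that the critical locus of $w$ is proper over the affine quotient. I would use this to show that the twisting factor $\prod_{i\mid c_i\neq0}(\rho_i+\langle d,\rho_i\rangle z)$ makes each summand Poincar\'e dual to a proper substack supported in $w^{-1}(0)$, hence of compact type in the sense of Definition~\ref{d:ctglsm}. Granting this, Theorem~\ref{t:dI0} applies and $\sigma^w(z\partial_\rho\II^Y)$ is a GLSM $I$-function, which by the previous step equals \eqref{e:IYW0}. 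The remaining bookkeeping---that the box classes $\ii_{g_d^{-1}}$ and twisted sectors are matched correctly under $\sigma^w$, and that the light-point exponential is transported verbatim---should be routine once the compact-type statement is in hand.
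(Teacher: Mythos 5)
Your overall route is the same as the paper's: write down the explicit toric $\II^Y$ from \eqref{e:IY}, observe that $z\partial_{\rho_i}$ acts on the degree-$d$ summand as multiplication by $(\rho_i+\langle d,\rho_i\rangle z)$ so that differentiating in all directions $i$ with $c_i\neq 0$ converts the uniform hypergeometric factor into the two-case factor of \eqref{e:IYW0}, and then invoke Theorem~\ref{t:dI0}. That bookkeeping is correct (one framing quibble: the paper applies Theorem~\ref{t:dI0} once with the single class $\rho_{\hat I}=\prod_{i\in\hat I}\rho_i$, using $z\partial_{\rho_{\hat I}}\II^Y=\prod_{i\in\hat I}z\partial_{\rho_i}\II^Y$, rather than iterating the theorem, which would not literally make sense since after one application you no longer have a quasimap $I$-function to differentiate).

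The genuine gap is in your verification of the compact-type hypothesis, which is the only nontrivial input (Proposition~\ref{p:narrowI} in the paper). Your plan --- that the factor $\prod_{i\mid c_i\neq 0}(\rho_i+\langle d,\rho_i\rangle z)$ makes each summand Poincar\'e dual to a proper substack --- fails as stated: for $\langle d,\rho_i\rangle\neq 0$ this factor is not the divisor class $\rho_i$ (it carries the shift $\langle d,\rho_i\rangle z$), so multiplying by it is not intersecting with $\{x_i=0\}$ and produces nothing supported on a smaller substack. The paper's argument splits $\hat I$ according to the sign of $\langle d,\rho_i\rangle$ on each degree-$d$, sector-$g$ summand: for $i$ with $\langle d,\rho_i\rangle=0$ the derivative really is cup product with $\rho_i$, hence a pushforward $s_{0*}s_0^*$ from $\{x_i=0\}_{i\in\hat I^0_d(g)}$; for $i$ with $\langle d,\rho_i\rangle\neq 0$ no geometric interpretation is used at all --- instead Lemma~\ref{l:properlemma} shows the fixed locus $F^{k,d}_{1,0}$ already maps to $\spec\big(\CC[x_i]^G_{\{i\mid\langle d,\rho_i\rangle=0\}}\big)$, because sections of the negative-degree line bundles vanish on the irreducible fixed curves and a $d(\CC^*)$-invariance argument eliminates the positive-degree coordinates from the ring of invariants. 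Only after combining these two mechanisms does the hypothesis $\CC[x_i]^G_{\{i\notin\hat I\}}=\CC$ (equivalently $(\CC[x_i])^\Gamma=\CC$ for $\hat I=\{i\mid c_i\neq 0\}$) force the relevant base $\spec\big(\CC[x_i]^G_{i\in I(g)\setminus\hat I}\big)$ to be a point, making the support proper. Your gloss that the hypothesis ``says the critical locus of $w$ is proper over the affine quotient'' also misattributes its role: properness of $Z(dw)$ is a standing GLSM axiom and is not what is being used here. You would need to supply the sign-splitting and the degree/invariant-theory argument to close this step.
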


In Section~\ref{s:EXS} we compute  $\II^{(Y, w)}$ in the particular cases of FJRW theory (including non-concave examples), Gromov--Witten theory (including non-convex examples), and hybrid theories.  We show, in each case, that \eqref{e:IYW0} agrees with previously computed $I$-functions.

Mirror constructions for toric GLSMs have been proposed by Gross--Katzarkov--Ruddat \cite{GKR} and Clarke \cite{Clarke} based on a duality of polyhedral cones.  A proposal for mirrors of nonabelian GLSMs has recently been put forth by Gu--Parsian--Sharpe and  Gu--Guo--Sharpe \cite{GPS, GGS}.  As is the case in Gromov--Witten theory, we expect the components of $\II^{(Y, w)}(\bt, q, z)$ to coincide with oscillatory integrals of the mirror GLSM.  Integral formulas for
$I$-functions for certain GLSMs have been obtained in a physics context by Knapp--Romo--Scheidegger \cite{KRS}.  Since the original posting of this paper,  an alternative  computation of GLSM $I$-functions using loop spaces has been given by Aleshkin--Liu in \cite{AL} to prove a Higgs--Coulomb correspondence.  It will be interesting to compare these different approaches.

\subsection{Plan of paper}

In Section~\ref{s:setup} we define a gauged linear sigma model and describe its associated state space.  In Section~\ref{s:comptyp} we propose a definition of GLSM invariants in the particular case when the evaluation maps are proper and the insertions are of compact type.  This section also contains a comparison between our definition and the original definition of \cite{FJR2b}.  Sections~\ref{s:g02mp} and~\ref{s:lp} consider the special case of genus zero GLSM invariants with exactly two heavy marked points, where we prove our comparison result relating  GLSM invariants to associated quasimap invariants.  In Section~\ref{s:GF} we define $I$-functions for a general GLSM and show how to compute these generating functions from known $I$-functions in Gromov--Witten theory.  Finally, in Section~\ref{s:EXS} we compare our formula with previous computations in the case of affine, geometric, and hybrid model GLSMs.

\section{GLSM setup and the state space}\label{s:setup}
In this section we define a gauged linear sigma model and set notation.  We also describe the various state spaces which will be used throughout the paper.

\subsection{Setup}
\begin{definition}\label{d:glsm0}
A \emph{gauged linear sigma model} (GLSM) $(V, G, \theta, w)$ consists of the following data:
\begin{enumerate}
\item a finite-dimensional $\ZZ$-graded complex vector space, $$V=\oplus_{i \in\ZZ_{\geq 0}}V_i,$$ where the grading is induced by the action  of a one-dimensional torus 
$\CC^*_R  \subseteq GL(V)$ (called the \emph{R-charge});
\item an action by a linearly reductive group $G  \subseteq GL(V)$;
\item a choice of character $\theta \in \widehat G_\QQ := \mathrm{Hom}(G,\CC^*) \otimes_\ZZ \QQ$; 
\item a $G$-invariant polynomial function $w \colon  V \to \CC$, homogeneous of degree $d_w > 0$ with respect to the grading action $\CC^*_R$,
\end{enumerate}
satisfying the following requirements:
 \begin{itemize} 
\item $G$ and $\CC^*_R$ commute: $g\cdot \lambda = \lambda \cdot g$ for all $g \in G$ and $\lambda \in \CC^*_R$;
\item the intersection $G \cap \CC^*_R$ is cyclic of order $d_w$ with generator $J$,
the diagonal element in $GL(V)$ which acts on $V$ by multiplying $V_n$ by $e^{2 \pi i n/d_w}$; 
\item there are no strictly semistable points for the linearization of the $G$-action on $V$ given by $\theta$:
\begin{equation}\label{nostrictsemistablepts}
V^{ss}(\theta)=V^s(\theta);
\end{equation}
\item
the critical locus $Z(dw) \subset Y$ is a non-empty proper over $\spec \CC$;
\item the graded pieces $V_i$ of $V$ are empty for $i<0$ and $i>d_w$:  $$V = \oplus_{0 \leq i \leq d_w} V_i;$$
\item the group $\CC^*_R$ preserves the $\theta$-semistable  locus $V^{ss}(\theta)$.
\end{itemize}
We denote by $\Gamma$ the group $G \cdot \CC^*_R.$
\end{definition}

\begin{remark}
In some contexts (\cite{FJR2, FKim}), a GLSM is also required to have a \emph{good lift} of $\theta$, that is, a character $\nu \in \widehat \Gamma$ such that $\nu|_G = \theta$ and $V^{ss}(\nu) = V^{ss}(\theta)$.  A good lift is necessary for defining $\epsilon$-stable GLSM invariants for $\epsilon > 0^+$.  In this paper we work exclusively with $0^+$-stable invariants, so we do not require a good lift.  Indeed one of the main goals of this paper is to compute GLSM invariants in the absence of a good lift.
\end{remark}

Let $Y$ denote the GIT stack quotient
$$Y  := [V \sslash_\theta G].$$ 
The function $w\colon V \to \CC$ descends to a function on $Y$, called the \emph{potential function}. By abuse of notation we will also denote the potential function by $w$.  

Given a smooth orbifold $Y$ and a function $w: Y \to \CC$, the pair $(Y, w)$ is known as a \emph{Landau--Ginzburg (LG) model}.  A \emph{map of LG models} $$j': (Y, w) \to (Y', w')$$ is a mophism $j: Y \to Y'$ such that 
$$w' \circ j = w.$$

We say that the GLSM $(V, G, \theta, w)$ \emph{represents} the LG model $(Y, w)$ if \\ $[V \sslash_\theta G] = Y$ and the potential $w: Y \to \CC$ is induced by the $G$-equivarient function on $V$ of the same name.  We note that two different GLSMs can represent the same LG model.  In this case one expects a relationship between the associated GLSM invariants.  By abuse of notation we will sometimes speak of the GLSM invariants of $(Y, w)$ when the particular GLSM $(V, G, \theta, w)$ representing $(Y, w)$ has been fixed.

We record for future use the following map of exact sequences
\begin{equation}\label{e:xi}
\begin{tikzcd}
0 \ar[r] & \langle J \rangle \ar[r] \ar[d]&  \CC^*_R \ar[r, "(-)^{d_w}"] \ar[d] & \CC^* \ar[r] \ar[d, "\op{id}"]& 0 \\
0 \ar[r] &G \ar[r] & \Gamma \ar[r, "\xi"] & \CC^* \ar[r] & 0
\end{tikzcd}
\end{equation}
where the homomorphism $\xi$ is defined by $\xi(g \cdot \lambda) = \lambda^{d_w}$ for $g \in G$, $\lambda \in \CC^*_R$.

\subsection{The state space}
In this section we introduce the state spaces used for GLSM and quasimap invariants.  

\begin{remark}
The potential $w\colon Y \to \CC$ pulls back to $IY$, the inertia stack of $Y$. We will also denote this pullback by $w$.  Unless otherwise specified, $H_*$ will denote Borel--Moore homology, and all (co)homology groups will have coefficients in $\CC$.
\end{remark}

\begin{definition}
The \emph{GLSM state space} of $(V, G, \theta, w)$ is defined as
$$\cc H(Y, w) := H^*_{CR}(Y, w^{+\infty}) = H^*(IY, w^{+\infty}),$$
where $w^{+\infty} := (\mathfrak{ Re}(w))^{-1}\left( M, \infty \right)$ for $M$ a sufficiently large real number.

The (Gromov--Witten theory) state space of $Y$ is given by
$$H^*_{CR}(Y) :=  H^*(IY).$$
\end{definition}
There is a map 
$$\phi^w\colon \cc H(Y, w) \to H^*_{CR}(Y),$$
defined by pullback via the inclusion of pairs $(IY, \emptyset) \hookrightarrow (IY, w^{+\infty})$.
The kernel of this map is sometimes referred to as the \emph{broad sectors}.

Consider a component $Y_g$ of $IY$, with $g$ a representative of a conjugacy class of $G$.  
Let $j\colon Z \hookrightarrow Y_g$ be a smooth and proper substack of $Y_g$.  Assume that $w|_Z = 0$.  In this case there exists a map of LG models:
$$ j'\colon (Z, 0) \hookrightarrow (Y_g, w) \hookrightarrow (IY, w).$$

  By capping with the fundamental class $\mu_{Y_g} \in H_*(Y_g)$, we obtain an isomorphism as in \cite[Section~19.1]{Fulton}: \begin{equation}\label{e:BMduality}
H^*(Y_g, Y_g\setminus Z) \xrightarrow{\cong} H_*(Z).\end{equation}
The pushforward $j_*\colon H^*(Z) \to H^*_{CR}(Y)$ may be defined as the composition
$$H^*(Z) \xrightarrow{\cong} H_*(Z) \xrightarrow{\cong} H^*(Y_g, Y_g\setminus Z) \to H^*(Y_g) \subset H^*_{CR}(Y),$$
where the first map is Poincar\'e duality, the second is the inverse to \eqref{e:BMduality}, and the third is the natural pullback of inclusion of pairs.

By assumption on $Z$, we have the inclusion $w^{+\infty} \subset IY \setminus Z$.  This allows the following definition:
\begin{definition}\label{d:BMpush}
Define the pushforward $$j'_*\colon H^*(Z) \to \cc H(Y, w)$$ as the composition
$$H^*(Z) \xrightarrow{\cong} H_*(Z) \xrightarrow{\cong} H^*(Y_g, Y_g\setminus Z) \to H^*(Y_g, w^{+\infty}) \subset \cc H(Y, w).$$
\end{definition}
\begin{definition}\label{d:ctglsm}  
Define the \emph{compact type GLSM state space} $\cc H_{ct}(Y, w) \subset \cc H(Y, w)$ as the subspace spanned by classes of the form $j'_*(\alpha)$ for $\alpha \in H^*(Z)$ and $Z$ a smooth and proper substack of $IY$ lying in $w^{-1}(0)$.
\end{definition}

Let $$\phi^{cs}\colon H^*_{CR, cs}(Y) \to H^*_{CR}(Y),$$
denote the natural map from 
 compactly supported Chen--Ruan cohomology $H^*_{CR, cs}(Y)$ to Chen--Ruan cohomology.
\begin{definition}\label{d:ctgw}
Define the \emph{compact type Gromov--Witten theory state space} $H^*_{CR,ct}(Y)$ as the image $\op{im}(\phi^{cs})  \subset H^*_{CR}(Y)$.  
\end{definition}

We observe the following:
\begin{lemma}\label{l:jfactors}
For $Z$ a smooth, proper substack of $w^{-1}(0)$ in $IY$, the pushforward $j_*\colon H^*(Z) \to H^*_{CR}(Y)$ factors as
$$j_* = \phi^w \circ j'_*.$$
\end{lemma}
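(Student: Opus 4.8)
The plan is to observe that both $j_*$ and $j'_*$ are built from the \emph{same} pair of duality isomorphisms followed only by a restriction map, so that the identity $j_* = \phi^w \circ j'_*$ reduces to the contravariant functoriality of relative cohomology applied to a single commuting triangle of inclusions of pairs. Concretely, write $\Psi\colon H^*(Z) \to H^*(Y_g, Y_g\setminus Z)$ for the composition of the Poincar\'e duality isomorphism $H^*(Z) \xrightarrow{\cong} H_*(Z)$ with the inverse of \eqref{e:BMduality}. By the very definitions of $j_*$ and $j'_*$, both factor through $\Psi$: one has $j_* = r \circ \Psi$ and $j'_* = r'\circ \Psi$, where $r\colon H^*(Y_g, Y_g\setminus Z)\to H^*(Y_g)$ and $r'\colon H^*(Y_g, Y_g\setminus Z)\to H^*(Y_g, w^{+\infty})$ are the respective restriction-of-pairs maps (followed, in each case, by the inclusion of the $Y_g$-summand into $H^*_{CR}(Y)$, resp. into $\cc H(Y,w)$). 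It therefore suffices to prove that $r = \phi^w \circ r'$.

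First I would record the chain of inclusions of pairs. Since $Z \subset w^{-1}(0)$, the inclusion $w^{+\infty}\subset IY\setminus Z$ noted before Definition~\ref{d:BMpush} restricts to $w^{+\infty}\cap Y_g \subset Y_g\setminus Z$, yielding inclusions of pairs
$$(Y_g,\emptyset)\xrightarrow{a}(Y_g, w^{+\infty})\xrightarrow{b}(Y_g, Y_g\setminus Z),$$
whose composite $b\circ a$ is the inclusion $c\colon (Y_g,\emptyset)\hookrightarrow (Y_g, Y_g\setminus Z)$. Applying contravariant cohomology gives $a^*\circ b^* = c^*$. By construction $r = c^*$, $r' = b^*$, and the $Y_g$-component of $\phi^w$ is exactly $a^*$; here I would use that $IY = \bigsqcup_g Y_g$ is a disjoint union of components, so Chen--Ruan cohomology splits as a direct sum and the pullback $\phi^w$ induced by $(IY,\emptyset)\hookrightarrow(IY, w^{+\infty})$ acts componentwise. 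Combining, $\phi^w\circ r' = a^*\circ b^* = c^* = r$, which is the desired identity, and hence $j_* = \phi^w\circ j'_*$.

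The only point requiring care is the compatibility of $\phi^w$ with the direct-sum decompositions of $\cc H(Y,w)$ and $H^*_{CR}(Y)$ over the components of $IY$, together with the verification that the maps $r$ and $r'$ are literally the restriction maps appearing in the definition of $j_*$ and in Definition~\ref{d:BMpush}; both checks are immediate once the inclusion $w^{+\infty}\subset IY\setminus Z$ is in hand. I do not anticipate any genuine obstacle here: the lemma is formal, and its entire content is the existence of the map of pairs $b$, which is precisely what the hypothesis $Z\subset w^{-1}(0)$ provides.
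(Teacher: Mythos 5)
Your proposal is correct and follows essentially the same route as the paper: the paper's proof likewise notes that $j_*$ and $j'_*$ share the common factor through $H^*(Y_g, Y_g\setminus Z)$ and reduces the claim to the commutativity of the triangle $H^*(Y_g, Y_g\setminus Z) \to H^*(Y_g, w^{+\infty}) \to H^*(Y_g)$, which is exactly your identity $r = \phi^w \circ r'$ coming from the composable inclusions of pairs. Your extra remarks on the componentwise splitting over $IY = \bigsqcup_g Y_g$ are a harmless elaboration of what the paper calls "immediate from the definitions."
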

\begin{proof}
The result is immediate from the definitions above and the obvious commutativity of
\[
\begin{tikzcd}
H^*(Y_g, Y_g\setminus Z) \ar[dr] \ar[r] & H^*(Y_g, w^{+\infty}) \ar[d] \\
& H^*(Y_g).
\end{tikzcd}
\]
\end{proof}

If $j\colon Z \hookrightarrow IY$ is the inclusion of a smooth proper substack, then $j_*(\alpha)$ lies in $H^*_{CR,ct}(Y)$.  By the lemma above, this implies that 
$$ \phi^w \left(\cc H_{ct}(Y, w)\right) \subseteq H^*_{CR,ct}(Y).$$

\subsection{Compact type cup product and pairing}\label{s:ctcpp}
We recall here some facts about compact type cohomology $H^*_{CR,ct}(Y)$ from \cite{Shoe1}.   Note that in \cite{Shoe1}, the compact type cohomology was referred to as \emph{narrow} cohomology.  In this paper we have adopted the language appearing in \cite{FJR2}.

\begin{definition}
Given a class $\gamma \in H^*_{CR,ct}(Y)$, we say $\wt \gamma \in H^*_{CR, cs}(Y)$ is a \emph{lift} of $\gamma$ if $$\phi^{cs}(\wt \gamma) = \gamma.$$
\end{definition}
In addition to the usual cup products:
\begin{align*}
\cup \colon &H^*_{CR}(Y) \times H^*_{CR}(Y) \to H^*_{CR}(Y),  \\
\cup \colon &H^*_{CR, cs}(Y) \times H^*_{CR}(Y) \to H^*_{CR,cs }(Y),
\end{align*}
there is a cup product
$
\cup_{cs}\colon H^*_{CR, ct}(Y) \times H^*_{CR, ct}(Y) \to H^*_{CR,cs}(Y)
$ 
which multiplies a pair of elements from compact type cohomology to an element of compactly supported cohomology.  It is
given by 
$$\gamma_1 \cup_{cs} \gamma_2 := \wt \gamma_1 \cup \gamma_2$$
where $\gamma_1, \gamma_2 \in H^*_{CR,ct}(Y)$ and $\wt \gamma_1$ is a lift of $\gamma_1$.  One can check that if $\kappa \in \ker(\phi^{cs})$ and $\gamma \in H^*_{CR, ct}(Y)$, then
$\kappa \cup \gamma = 0$ in $H^*_{CR, ct}(Y)$, from which it follows that $\cup_{cs}$ is well-defined (See \cite{Shoe1} for details and a proof in de Rham cohomology when $Y$ is smooth, a similar argument holds in singular cohomology).

Recall the Chen--Ruan pairing $\br{ -,-}^Y$.  For $\wt \gamma_1 \in H^*_{CR, cs}(Y)$ and $ \gamma_2 \in H^*_{CR}(Y)$, 
$$\br{\wt \gamma_1,  \gamma_2}^Y := 
\int_{IY} \wt \gamma_1 \cup I_*( \gamma_2),$$
where $I\colon IY \to IY$ is the natural involution.  By \cite{ChenR1}, this pairing is nondegenerate.
We also define a compact type pairing $\br{ -,-}^{Y, cs}$ on $H^*_{CR,ct}(Y)$ by
$$\br{\gamma_1, \gamma_2}^{Y, cs} := \int_{IY} \gamma_1 \cup_{cs} I_*(\gamma_2),$$
By \cite[Corollary~2.7]{Shoe1}, this pairing is nondegenerate.

It is immediate from the definitions that the product and pairing on $H^*_{CR,ct}(Y)$ are compatible with the map $\phi^{cs}$.  That is, for $\wt \gamma_1, \wt \gamma_2 \in H^*_{CR, cs}(Y)$, 
\begin{equation}\label{e:prodcomp}
\wt \gamma_1 \cup \wt \gamma_2 = \phi^{cs}(\wt \gamma_1) \cup \wt \gamma_2  = \phi^{cs}(\wt \gamma_1) \cup_{cs} \phi^{cs}(\wt \gamma_2),
\end{equation}
and
\begin{equation}\label{e:paircomp}
\langle \wt \gamma_1, \phi^{cs}(\wt \gamma_2)\rangle^Y = \langle \phi^{cs}(\wt \gamma_1) , \phi^{cs}(\wt \gamma_2)\rangle^{Y, cs}
\end{equation}
where $\langle -,-\rangle^Y$ denote the usual Chen--Ruan pairing between $H^*_{CR, cs}(Y)$ and $H^*_{CR}(Y)$.

\section{Compact type GLSM invariants}\label{s:comptyp}
In this section we provide a definition of GLSM invariants with compact type  insertions in the special case that the evaluation maps are proper.  We will see in later sections that this is sufficient for a mirror theorem.

\subsection{Review of quasimap invariants}\label{s:qmap}

We denote the moduli space of genus $h$, $n$-pointed, degree $d$ quasimaps to $Y = [V \sslash_\theta G]$ by $Q_{h,n}(Y, d)$.  In this paper we will only work with $0^+$-stability, so we omit it from the notation.  We give a construction of $Q_{h,n}(Y, d)$ and the corresponding quasimap invariants below.  This particular presentation of the moduli space and virtual class appear in \cite{CCKtoric} in the toric setting.  In this generality, the moduli space and virtual class were originally constructed in \cite{CKM}.

\begin{definition}\label{d:frakG}
Let $\f M_{h,n}$ denote the moduli stack of genus $h$, $n$-pointed, pre-stable orbi-curves. By an orbi-curve, we mean a twisted curve together with a section of each gerbe markings (see \cite[Definition~3.1.1]{CFGKS}).  Let $\Sigma_i \to \f M_{h,n}$ denote the $i$th marked point gerbe and let $\Sigma = \coprod_{i=1}^n \Sigma_i$.  Let $\sigma_i: \Sigma_i \to \f C$ denote the the inclusion of the $i$th marked point.

Let $\f M_{h,n}(BG)$ denote the moduli space of genus $h$, $n$-pointed, prestable orbi-curves $\cC$, together with a principal $G$-bundle $\cP$ such that the induced morphism $[\cP]\colon \cC \to BG$ is representable.  

Define the \emph{degree} of a principal $G$-bundle $\cP \to \cC$ to be the element  $d \in \hom_\ZZ(\widehat G, \QQ)$ such that, for each character $\zeta \in \widehat G$, 
$$d(\zeta) = \deg(\cP \times_G \CC_\zeta),$$
where $\CC_\zeta$ is the representation corresponding to $\zeta$.

For $d \in \hom_\ZZ(\widehat G, \QQ)$, define $\f M_{h,n}(BG, d)$ to be the open and closed substack of $\f M_{h,n}(BG)$ for which the principal bundle has degree $d$.
Denote by $\f M_{h,n}(BG, d)^\theta$ the open substack consisting of pairs $(\cC, \cP)$ such that $\cC$ has no rational tails and, on each irreducible component $\cC '$ of $\cC$ for which $\omlog|_{\cC '}$ is trivial,  the degree 
of $$\cc L_\theta := \cP \times_G \CC_\theta$$ restricted to $\cC '$ is positive.
Let $\pi\colon \f C \to \f M_{h,n}(BG, d)^\theta$ denote the universal curve and let $\cP_G \to \f C$ be the universal principal $G$-bundle.  Denote by  $\cV_G$ the vector bundle $ \cP_G \times_G V$. 
\end{definition}

\begin{definition}\label{d:sections}
Define the \emph{space of sections} of $\cV_G$ to be
\[ \op{tot}(\pi_*\cV_G) := \op{Spec} \left( \Sym \left( \RR^1 \pi_*(\omega_\pi \otimes \cV_G^\vee)\right)\right),\]
lying over $\f M_{h,n}(BG, d)^\theta$.
\end{definition}
Closed points of $\op{tot}(\pi_*\cV_G)$ consist of a marked curve $\cC$ together with a section $u_G \in \Gamma(\cC, \cV_G|_\cC)$, which defines a map $[u_G]\colon \cC \to [V/G]$ to the stack quotient.  The \emph{base points} of $[u_G]$ consist of the preimage of the unstable locus $[u_G]^{-1}( [V^{us}(\theta)/G])$.

\begin{definition}\label{d:qmod}
The stack $Q_{h,n}(Y, d)$ is the open substack of $\op{tot}(\pi_*\cV_G)$ satisfying the condition that the basepoint locus of $[u_G]$ is a finite set, disjoint from the nodes and markings of $\cC$.  It is a Deligne--Mumford stack, proper over $\spec( (\Sym^\bullet V^\vee)^G)$ \cite[Theorem~2.7]{CCKbig}.
\end{definition}

\begin{remark}
In case that $Y$ is an orbifold, there is a slight difference in the definition of the moduli space of stable quasimaps of from Definition~\ref{d:qmod} and that originally defined in \cite{CCKbig} due to our convention that the gerbe markings have sections (Definition~\ref{d:frakG}).  This does not effect the quasimap invariants, due to the correction factor $\bbr$ appearing in Definition~\ref{d:qinvts}.
\end{remark}

The moduli space $Q_{h,n}(Y, d)$ admits a natural relative  obstruction theory over $\f M_{h,n}(BG, d)^\theta$ given by $\RR \pi_*(\cV_G)^\vee.$ This defines a virtual class 
$$[Q_{h,n}(Y, d)]^{vir}$$ by \cite{BF}.
By choosing an appropriate $\pi$-acyclic resolution of $\cV_G$, we can realize this virtual class more concretely.
We make use of the following:
\begin{proposition}\cite[Section~3.4]{CFGKS}\label{p:res1}
There exists a resolution of $\cV_G$ by a two-term complex of vector bundles $\cc A \xrightarrow{\delta} \cc B$ such that:
\begin{itemize}
\item $\cc A$ and $\cc B$ are $\pi$-acyclic;
\item \label{i2} there exists a restriction map $\cc A \to \cV_G|_{\Sigma}$ such that the composition $\cV_G \to \cc A \to \cV_G|_{\Sigma}$ is the usual restriction, and $\pi_*(\cc A) \to \pi_*(\cV_G|_{\Sigma})$ is surjective.
\end{itemize}

\end{proposition}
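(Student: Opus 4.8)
The plan is to reduce the statement to a relative form of Serre's construction of acyclic resolutions over the base $S := \f M_{h,n}(BG, d)^\theta$, and to meet the two bullets by superposing two ingredients: a ``positivity'' bundle that exhibits $\cV_G$ as a subbundle of a $\pi$-acyclic bundle with locally free cokernel, and a ``marking'' bundle that produces the restriction map to $\cV_G|_\Sigma$. Throughout I would fix a $\pi$-relatively ample line bundle $\cc{O}(D)$ on the universal orbi-curve $\f C$, with $D$ effective and supported away from the nodes and from the markings $\Sigma$, so that $\cc{O}(D)|_\Sigma$ is canonically trivial. On a twisted curve such a line bundle exists, and relative Serre vanishing, cohomology-and-base-change, and relative global generation for large twists hold as usual (using that the gerbe markings carry sections).

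For the positivity piece, for $m \gg 0$ the sheaf $\cV_G^\vee(mD)$ is $\pi$-globally generated, so I would take the induced surjection $\pi^* E \otimes \cc{O}(-mD) \twoheadrightarrow \cV_G^\vee$, with $E := \pi_*(\cV_G^\vee(mD))$ locally free by cohomology and base change. Dualizing gives a subbundle inclusion $\iota_0 \colon \cV_G \hookrightarrow \cc A_0 := \pi^* E^\vee \otimes \cc{O}(mD)$ with locally free cokernel $\cc B_0$. For $m \gg 0$ the bundle $\cc A_0$ is $\pi$-acyclic (projection formula plus Serre vanishing), and then $\cc B_0$ is $\pi$-acyclic as well, since $\RR^1\pi_* \cc A_0 \to \RR^1 \pi_* \cc B_0 \to \RR^2\pi_* \cV_G = 0$ on a curve. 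This already yields a two-term acyclic resolution $[\cc A_0 \to \cc B_0]$ of $\cV_G$; the real content of the second bullet is to upgrade it so as to control $\cV_G|_\Sigma$.

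The naive way to obtain the restriction map is to use $\cV_G(mD)$ together with $r = \mathrm{res}_\Sigma \colon \cV_G(mD) \to \cV_G(mD)|_\Sigma = \cV_G|_\Sigma$: here $r$ composed with the canonical inclusion $\mathrm{can}\colon \cV_G \hookrightarrow \cV_G(mD)$ is exactly the usual restriction, and $\pi_* r$ is surjective for $m \gg 0$ (equivalently $\RR^1\pi_*(\cV_G(mD - \Sigma)) = 0$). The difficulty — and what I expect to be the main obstacle — is that the cokernel of $\mathrm{can}$ is torsion, so this choice is not a complex of vector bundles. I would resolve this tension by taking $\cc A := \cc A_0 \oplus \cV_G(mD)$ with the diagonal inclusion $\iota := (\iota_0, \mathrm{can})$, letting $\delta$ be the quotient onto $\cc B := \cc A / \iota(\cV_G)$, and setting $r := (0, \mathrm{res}_\Sigma)$. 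The key observation is that, since $\iota_0$ is already a subbundle inclusion, the combined map $\iota$ is a subbundle inclusion too, and its cokernel sits in an extension $0 \to \cV_G(mD) \to \cc B \to \cc B_0 \to 0$; hence $\cc B$ is locally free and $\pi$-acyclic. With these choices $r \circ \iota = \mathrm{res}_\Sigma \circ \mathrm{can}$ is the usual restriction, while $\pi_* r$ is surjective because its second component is. Thus $[\cc A \xrightarrow{\delta} \cc B]$ is the desired resolution.

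The essential insight is therefore that the two requirements pull in opposite directions — local freeness of the cokernel wants $\cV_G$ embedded as a subbundle, while the restriction map wants the torsion-producing twist $\cV_G(mD)$ — and that both can be met at once by superposing the constructions and exploiting that the subbundle property is inherited by the diagonal inclusion. The remaining points I would still need to verify are standard but not entirely formal: relative positivity and Serre-type vanishing on twisted orbi-curves, and confirming that $m$ can be chosen uniformly over $S$. Since $S$ is not quasi-compact, I would either work component-by-component over loci of fixed combinatorial type, or argue via an ind/limit structure, and then check throughout that the relevant pushforwards remain locally free and compatible with base change.
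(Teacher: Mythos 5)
The paper offers no proof of this proposition---it is imported from \cite[Section~3.4]{CFGKS} (Lemma~3.4.1 there, which the appendix also invokes by name)---so the comparison is with that construction. The reference takes $D$ an effective relative Cartier divisor on $\f C$, avoiding nodes and markings, with $\cc O(D)$ $\pi$-relatively ample, and sets $\cc A:=\cV_G(mD)$, $\cc B:=\cV_G(mD)/\cV_G$ for $m\gg 0$; the quotient is then a sheaf supported on $mD$, finite and flat over the base, which is all that is ever used downstream (e.g.\ in Proposition~\ref{p:cosvir} and the appendix one only needs $\pi$-acyclicity, local freeness of the pushforwards, and the restriction map). Your construction is a genuine variant built to satisfy the \emph{literal} statement that $\cc B$ be locally free on $\f C$: you correctly identify that the single twist produces a torsion cokernel, and your fix---superposing the subbundle embedding $\cV_G\hookrightarrow \pi^*E^\vee\otimes\cc O(mD)$ obtained by dualizing the relative evaluation surjection with the summand $\cV_G(mD)$ that carries the restriction map---is sound: the diagonal inclusion is again a subbundle inclusion, $\cc B$ is an extension of $\cc B_0$ by $\cV_G(mD)$ and hence locally free and $\pi$-acyclic, and the second bullet is inherited from the second summand exactly as you argue. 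What your version buys is honest local freeness of $\cc B$ on the universal curve (stronger than needed); what it costs is a larger $\cc A$ and the extra dualization step. The caveats you flag are the substantive ones and are shared with the reference: global existence of $D$ and uniformity of $m$ over the non--finite-type base $\f M_{h,n}(BG,d)^\theta$. The standard remedy, which completes your argument, is that $Q_{h,n}(Y,d)$ is of finite type (being proper over the affine quotient) and lies over a finite-type open substack of $\f M_{h,n}(BG,d)^\theta$, so the resolution only ever needs to be constructed there, where relative Serre vanishing, global generation, and the choice of $D$ avoiding the (fiberwise finite) sets of nodes and markings behave as you assume.
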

Let $A = \pi_*(\cc A)$ and $B = \pi_*(\cc B)$.  Let $p_A\colon \op{tot}(A) \to \f M_{h,n}(B\Gamma, d)^\theta$ denote the projection.
There exists an open subset $U \subset \tot(A)$ such that $Q_{h,n}(Y, d)$ can be realized as $\{\beta = 0\} \subset U$ where $\beta = p_A^*(\delta) \circ \op{taut}_A $ is the section of $ E = p_A^*(B)$ induced by $\delta$ (see, for instance, the proof of \cite[Theorem~3.2.1]{CCKtoric}).

The two-term complex $$[E^\vee \xrightarrow{p_A^*(\delta)^\vee} p_A^*(A)^\vee = \Omega^1_{U/ \f M_{h,n}(BG, d)^\theta}]$$ gives a resolution of 
$\RR \pi_*(\cV_G)^\vee$ over $U$, and thus restricts to the relative perfect obstruction theory described above.  
In this case, the virtual class may be realized as a localized top Chern class (See \cite[Section~6]{BF} for details).  
Consider the fiber square
\[
\begin{tikzcd}
Q_{h,n}(Y, d) \ar[r] \ar[d] & U \ar[d, "\beta"] \\
U \ar[r, "0"] & \tot(E),\end{tikzcd}
\]
where $\beta$ and $0$ are the maps induced by the corresponding sections of $E$.
\begin{definition}\label{d:virtq} Define the virtual class $[Q_{h,n}(Y, d)]^{vir}$ to be the localized top Chern class
$$[Q_{h,n}(Y, d)]^{vir}:= e_{loc}(E, \beta) = \op{cl}\left(0^!( [U])\right),$$
where $0^!: A_*(U) \to A_*(Q_{h,n}(Y, d))$ is the Gysin pullback and \\ $\op{cl}: A_*(Q_{h,n}(Y, d)) \to H_*(Q_{h,n}(Y, d))$  is the map, described in \cite[Section~19.1]{Fulton}, which gives the corresponding class in Borel--Moore homology.
\end{definition} 
We take this as the definition.  It coincides with the (image in homology of the) virtual class of \cite{BF} using the relative obstruction theory given above, thus is independent of the resolution $[A \to B]$ chosen in Proposition~\ref{p:res1}.

We define quasimap invariants by integrating against the virtual class.  
We require the following lemma.
\begin{lemma}\label{l:propev}
The evaluation map $ev_i\colon Q_{h,n}(Y, d) \to IY$ is proper for $1 \leq i \leq n$.
\end{lemma}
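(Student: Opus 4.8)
The plan is to deduce properness of $ev_i$ from the properness of the structure morphism to the affine quotient, using the cancellation property of proper morphisms. Set $S := \spec((\Sym^\bullet V^\vee)^G)$. By Definition~\ref{d:qmod} the structure morphism $p\colon Q_{h,n}(Y, d) \to S$ is proper. The inclusion of invariant functions $(\Sym^\bullet V^\vee)^G \hookrightarrow \Sym^\bullet V^\vee$ induces an affinization morphism $\varpi\colon Y = [V\sslash_\theta G]\to S$ (factoring through $[V/G]\to S$), and composing with the projection $IY \to Y$ yields a morphism $I\varpi\colon IY \to S$. I would then prove that $ev_i$ is proper by establishing the two facts $I\varpi \circ ev_i = p$ and that $I\varpi$ is separated, after which the cancellation property gives the conclusion.

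The key step is the identity $I\varpi \circ ev_i = p$. Unwinding the definitions, a point of $Q_{h,n}(Y, d)$ is a prestable orbi-curve $\cC$ together with a section $u$ defining $[u]\colon \cC \to [V/G]$, and the structure morphism $p$ records the point of $S$ obtained by evaluating the $G$-invariant functions on $u$. Since $\cC$ is proper and connected while $S$ is affine, the composite $\cC \xrightarrow{[u]} [V/G] \to S$ is constant, and this constant value is by definition $p$ of the quasimap. On the other hand, because the $i$th marking is disjoint from the basepoint locus, $[u]$ is defined there and lands in $Y$, so $ev_i$ of the quasimap is the associated point of $IY$; applying $I\varpi$ records the image of that point under $\varpi$, which is precisely the same constant value. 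Hence $I\varpi \circ ev_i = p$. Carrying this identification out in families rather than pointwise is the step that requires the most care, in particular tracking that the affinization is well defined even along the basepoints and that passage to the inertia stack only forgets the data lying over $Y$; I expect this bookkeeping to be the main obstacle.

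It remains to observe that $I\varpi$ is separated. Since $V^{ss}(\theta) = V^s(\theta)$, all stabilizers of the $G$-action on the stable locus are finite, so $Y$ is a separated Deligne--Mumford stack; consequently the projection $IY \to Y$ is finite, in particular separated. As $\varpi\colon Y \to S$ is projective over $S$, it is separated, and therefore the composite $I\varpi\colon IY \to S$ is separated. Applying the cancellation property of proper morphisms to the factorization $Q_{h,n}(Y,d) \xrightarrow{ev_i} IY \xrightarrow{I\varpi} S$---using that $I\varpi\circ ev_i = p$ is proper while $I\varpi$ is separated---we conclude that $ev_i$ is proper, as claimed.
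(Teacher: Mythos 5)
Your proposal is correct and is essentially the paper's own argument: the paper likewise invokes properness of $Q_{h,n}(Y,d)$ over $\spec((\Sym^\bullet V^\vee)^G)$ from \cite[Theorem~2.7]{CCKbig}, observes that this structure morphism factors through $ev_i$, and concludes by the cancellation property (Stacks Project Lemma 01W6). You have merely spelled out the factorization and the separatedness of $IY$ over the affine quotient, which the paper leaves implicit.
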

\begin{proof}

By  \cite[Theorem~2.7]{CCKbig}, $Q_{h,n}(Y, d)$ is a proper Deligne-Mumford stack over the affine quotient $\spec( (\Sym^\bullet V^\vee)^G)$. The map $$Q_{h,n}(Y, d) \to \spec( (\Sym^\bullet V^\vee)^G)$$ factors through $ev_i$, therefore by \cite[\href{https://stacks.math.columbia.edu/tag/01W6}{Lemma 01W6}]{stacks-project}, $ev_i$ is proper.
\end{proof}
For $1 \leq i\leq n$ let $\bbr_i$ be the locally constant function on $Q_{h,n}(Y, d)$ with value given by the order of the  automorphism group at the $i$th marking on each connected component (see \cite[Section 2.1]{Ts}).  Let $\bbr$ denote the product $$\bbr = \prod_{i=1}^n \bbr_i.$$ The quasimap invariant $\br{ \gamma_1 \psi^{k_1}, \ldots, \gamma_n \psi^{k_n}}_{h,n,d}^Y$ is well defined if either:
\begin{itemize}
\item at least one class $\gamma_i$ lies in compactly supported cohomology;
\item at least two classes $\gamma_i, \gamma_j$ lie in compact type cohomology.
\end{itemize}
\begin{definition}\label{d:qinvts}
Given $\gamma_1 \in H^*_{CR, cs}(Y)$ and $\gamma_2, \ldots, \gamma_n \in H^*_{CR}(Y)$, define the quasimap invariant $\br{ \gamma_1 \psi^{k_1}, \ldots, \gamma_n \psi^{k_n}}_{h,n,d}^Y$ to be 
$$ \int_{[Q_{h,n}(Y, d)]^{vir}} 
\bbr \cdot
{ev_1^c}^*(\gamma_1) \cup \psi_1^{k_1} \cup \prod_{2 \leq i \leq n}   ev_i^*(\gamma_i) \cup \psi_i^{k_i}, $$
where ${ev_1^c}^*$ is the pullback in compactly supported cohomology. 
Given $\gamma_1, \gamma_2 \in H^*_{CR, ct}(Y)$ and $\gamma_3, \ldots, \gamma_n \in H^*_{CR}(Y)$, define $\br{ \gamma_1 \psi^{k_1}, \ldots, \gamma_n \psi^{k_n}}_{h,n,d}^Y$ to be
$$ \int_{[Q_{h,n}(Y, d)]^{vir}} 
\bbr \cdot
{ev_1}^*(\gamma_1) \cup_{cs} {ev_2}^*(\gamma_2) \cup \psi_1^{k_1} \cup \psi_2^{k_2} \cup \prod_{3 \leq i \leq n}   ev_i^*(\gamma_i) \cup \psi_i^{k_i}. $$
\end{definition}

\begin{definition}
An element $d \in \hom_\ZZ(\widehat G, \QQ)$ is $(V, G, \theta)$-effective if it arises as a finite sum of elements $d_i$ for which there exists a stable quasimap of degree $d_i$.  The set of effective elements forms a semigroup, denoted
by $\eff(V, G, \theta).$
\end{definition}

\subsection{A virtual class for GLSMs}\label{ss:virtGLSM}

Given a GLSM $(V, G, \theta, w)$, the moduli stack of $\epsilon$-stable genus $h$, $n$-pointed, degree $d$ Landau--Ginzburg maps, $LG_{h,n}^{\epsilon}(Y, d)$ was defined in \cite{FJR2}.  In this paper we will focus on the $\epsilon = 0^+$ stability condition.  The moduli space $LG_{h,n}^{0^+}(Y, d)$ will be denoted by $QLG_{h,n}(Y, d)$.  We call this the moduli space of LG quasimaps.

Via the exact sequences \eqref{e:xi},
the group $\hom_\ZZ(\hat \Gamma, \QQ)$ is canonically isomorphic to 
$$\hom_\ZZ(\widehat G, \QQ) \times \hom_\ZZ(\hat \CC^*_R, \QQ).$$ 

As in Definition~\ref{d:frakG}, a principal $\Gamma$-bundle $\cP \to \cC$ naturally defines an element  of $\hom_\ZZ(\hat \Gamma, \QQ)$.  Unless otherwise specified, the \emph{degree} of a principal $\Gamma$-bundle will refer the image of this element under the  projection map $\hom_\ZZ(\hat \Gamma, \QQ) \to \hom_\ZZ(\widehat G, \QQ)$.

\begin{definition}\label{frakGamma}

For $c \in \ZZ$, $h, n \in \ZZ_{\geq 0}$ and $d \in \hom_\ZZ(\widehat G, \QQ)$, let   $\f M_{h,n}(B\Gamma, d)_{\omlog^c}$ denote the moduli stack of genus $h$ $n$-pointed prestable orbi-curves $\cC$ with a degree $d$ principal $\Gamma$-bundle $\cP \to \cC$, and an isomorphism $$\eta\colon \xi_*(\cP) \cong (\omlog^{\otimes c})^\circ,$$ such that the induced morphism $[\cP]\colon \cC \to B\Gamma$ is representable.

Let $\nu \in \widehat \Gamma$ be a \emph{lift} of $\theta$, that is, $\nu|_G = \theta$.  
Denote by $\f M_{h,n}(B\Gamma, d)^\theta_{\omlog^c}$ the open substack of $\f M_{h,n}(B\Gamma, d)_{\omlog^c}$ 
consisting of pairs $(\cC, \cP)$ such that $\cC$ has no rational tails and,  on each irreducible component $\cC '$ of $\cC$ for which $\omlog|_{\cC '}$ is trivial,  the degree 
of $$\cc L_\nu := \cP \times_\Gamma \CC_\nu$$ restricted to $\cC '$ is positive.
We note that for two choices $\nu, \nu '$ of lift, the line bundles $\cc L_\nu$ and $\cc L_{\nu '}$ differ by a power of $\omlog$.  Therefore the definition of $\f M_{h,n}(B\Gamma, d)^\theta_{\omlog^c}$ is \emph{independent} of the lift of $\theta$ \cite[Corollary~4.2.15]{FJR2}.

Let $\pi\colon \f C \to \f M_{h,n}(B\Gamma, d)^\theta_{\omlog^c}$ denote the universal curve and $\cP_\Gamma \to \f C$ the universal principal $\Gamma$-bundle.  Let $\cV_\Gamma := \cP_\Gamma \times_\Gamma V$.
\end{definition}

The construction of the moduli space  $QLG_{h,n}(Y, d)$ and the virtual class is identical to Section~\ref{s:qmap} after replacing $\f M_{h,n}(BG, d)^\theta$ with $\f M_{h,n}(B\Gamma, d)^\theta_{\omlog}$.  As in Definition~\ref{d:sections}, define $\op{tot}(\pi_*\cV_\Gamma)$, this time over $\f M_{h,n}(B\Gamma, d)^\theta_{\omlog}$.  Closed points of $\op{tot}(\pi_*\cV_\Gamma)$ consist of a marked curve $\cC$ together with a section $u_\Gamma \in \Gamma(\cC, \cV_\Gamma|_\cC)$, which defines a map $[u_\Gamma]\colon \cC \to [V/\Gamma]$ to the stack quotient.  
The stack $QLG_{h,n}(Y, d)$ is the open substack of $\op{tot}(\pi_*\cV_\Gamma)$ over $\f M_{h,n}(B\Gamma, d)^\theta_{\omlog}$ satisfying the same condition that  the basepoint locus $[u_\Gamma]^{-1}( [V^{us}(\theta)/\Gamma])$ is a finite set disjoint from nodes and markings.  

As described in \cite[Section~4.4]{FJR2}, there exist evaluation maps $ev_i\colon QLG_{h,n}(Y, d) \to IY$ which are defined as follows.  
Let $\Sigma_i \to QLG_{h,n}(Y, d)$ denote the $i$th marked point gerbe (recall that by definition of $ \f M_{h,n}(B\Gamma, d)^\theta_{\omlog}$ this is a trivial gerbe).  
The restriction $[u_\Gamma|_{\Sigma_i}]\colon \Sigma_i \to [V/ \Gamma]$ determines a map $QLG_{h,n}(Y, d) \to I[V/\Gamma]$.  In fact this map factors through $[V/G]$ as follows: The isomorphism
$\eta\colon \xi_*(\cP) \cong (\omlog)^\circ$ induces a map
$$ \cP|_{\Sigma_i} \to 
  (\omlog)^\circ|_{\Sigma_i}  \xrightarrow{=} \cc O_{QLG_{h,n}(Y, d)}^\circ|_{\Sigma_i},$$
  where the second map is the residue at $\Sigma_i$.
The kernel ${\cP}_i '$ gives a principal $G$-bundle on $\Sigma_i$.  There is an isomorphism ${\cP}_i ' \times_G \Gamma \cong \cP|_{\Sigma_i} $, from which it follows that ${\cP}_i ' \times_G V = \cP|_{\Sigma_i} \times_\Gamma V$.  Therefore the restriction $u|_{\Sigma_i} $ defines a section of ${\cP}_i ' \times_G V$, which in turn defines a map $\Sigma_i \to [V/G]$.  Due to the condition that basepoints are disjoint from markings,  in fact $\Sigma_i $ maps to $Y= [V\sslash_\theta G]$, which allows us to define evaluation maps 
$$ev_i\colon QLG_{h,n}(Y, d) \to IY.$$

As before, the relative  obstruction theory over $\f M_{h,n}(B\Gamma, d)^\theta_{\omlog}$ is given by $\RR \pi_*(\cV_\Gamma)^\vee.$
Proposition~\ref{p:res1} holds over $\f M_{h,n}(B\Gamma, d)^\theta_{\omlog}$ as well.  Thus we can define a smooth Deligne--Mumford stack $U \to \f M_{h,n}(B\Gamma, d)^\theta_{\omlog}$, a vector bundle $E \to U$, and a natural section $\beta \in \Gamma(U, E)$ such that 
$QLG_{h,n}(Y, d) = \{\beta = 0\}$.  As in the previous section, we make the following definition.
\begin{definition}\label{d:virt} Define the virtual class $[QLG_{h,n}(Y, d)]^{vir}$ to be
$$[QLG_{h,n}(Y, d)]^{vir} := e_{loc}(E, \beta) = \op{cl}\left(0^!( [U])\right),$$
where $0^!: A_*(U) \to A_*(QLG_{h,n}(Y, d))$ is the Gysin pullback via the diagram
\[
\begin{tikzcd}
QLG_{h,n}(Y, d) \ar[r] \ar[d] & U \ar[d, "\beta"] \\
U \ar[r, "0"] & \tot(E).\end{tikzcd}
\]
\end{definition}

In general, the virtual class of Definition~\ref{d:virt} is not sufficient for defining enumerative GLSM invariants because the moduli space $QLG_{h,n}(Y, d)$ is not usually proper, nor are the evaluation maps.  Furthermore, this virtual class does not take into account the potential $w\colon Y \to \CC$.

\subsection{GLSM invariants via cosection localization}\label{ss:cosloc}

In this section we recall the definition of compact type GLSM invariants proposed by Fan--Jarvis--Ruan in \cite{FJR2, FJR2b}.\footnote{The construction of the GLSM invariants appearing in the most recent preprint version of the paper \cite[Section~6.1]{FJR2b} is more detailed and slightly more general than that appearing in the published version \cite{FJR2}.  Throughout this section we will follow the construction described in the preprint \cite{FJR2b}.}  For compact type insertions $\gamma_1, \ldots, \gamma_n \in \cc H_{ct}(Y, w)$ satisfying Assumption~\ref{a:FJR} below, they define the GLSM invariant $$\br{ \gamma_1 \psi^{k_1}, \ldots, \gamma_n \psi^{k_n}}_{h,n,d}^{(Y, w)}$$ in terms of a cosection localized virtual class.    This section serves as motivation for our eventual definition of  compact type GLSM invariants in the case of proper evaluation maps (Definition~\ref{d:propevinvts}), however it is not strictly necessary for the rest of the paper.  

Let $\gamma_1, \ldots, \gamma_n \in \cc H_{ct}(Y, w)$ be compact type insertions.
By definition of $\cc H_{ct}(Y, w)$, $\gamma_i$  may be expressed as  ${j'_i}_*(\beta_i)$ for $j_i\colon Z_i \hookrightarrow IY$ an inclusion of a smooth proper substack into the locus $\{w = 0\}$ and $j'_i\colon (Z_i,0) \to (IY, w)$ the map of LG models.  Without loss of generality we may assume that $Z_i$ is contained in a single twisted sector $Y_{g_i} = [(V^{g_i})^{ss} /  Z_G(g_i)]$.  Following \cite[Section~6.1]{FJR2b}, we assume for $1 \leq i \leq n$ that:
\begin{assumption}\label{a:FJR}
There exists a $\Gamma$-invariant subspace $H_i \subset V^{g_i}$ such that $Z_i = [H_i^{ss} / Z_G(g_i)]$.
\end{assumption}

Let $$\underline{ev}:  QLG_{h,n}(Y, d) \to IY \times \cdots \times IY $$ denote the product of the evaluation maps.  Denote by $\underline Z$ the product 
$Z_1  \times \cdots \times Z_n \subset IY \times \cdots \times IY$.
Define the substacks  $$QLG_{\underline Z}(Y, d) : = \underline{ev}^{-1}(\underline Z)$$
and 
$$QLG_{\underline Z}(Crit(w), d) = QLG_{\underline Z}(Y, d) \cap QLG_{h,n}(Crit(w), d),$$
where $QLG_{h,n}(Crit(w), d)$ denotes the locus of LG quasimaps to the critical locus $[\{dw = 0\} /\Gamma] \subset [V/\Gamma]$.
As explained in \cite{FJR2b} there is a modified perfect obstruction theory on $QLG_{\underline Z}(Y, d)$, which we recall below.  

First, we restrict from $\f M_{h,n}(B\Gamma, d)^\theta_{\omlog}$ to $\f M_{h,(\underline g)}(B\Gamma, d)^\theta_{\omlog}$, the open and closed substack defined by the condition that the generator of the isotropy at the $i$th marked point maps to $g_i \in G$.  Let $\pi_{\underline g}: \f C_{\underline g} \to \f M_{h,(\underline g)}(B\Gamma, d)^\theta_{\omlog}$ denote the universal curve.  
On $\f C_{\underline g}$, consider the restriction map $\cV_\Gamma \to \cV_\Gamma|_{\Sigma_i} = {\sigma_i}_* \sigma_i^* \cV_\Gamma$.  The vector bundle $\sigma_i^* \cV_\Gamma$ splits into eigen-bundles with respect to the action of $g_i$.  Thus there is a canonical projection $\sigma_i^* \cV_\Gamma \to (\sigma_i^* \cV_\Gamma)^{g_i}$.
Define 
$$\cV_\Gamma|_{\Sigma_i}^{g_i}:={\sigma_i}_* (\sigma_i^* \cV_\Gamma)^{g_i}.$$

Let $\cH_i$ denote the vector bundle $\cP_\Gamma \times_\Gamma H_i$ on $\f C_{\underline g}$.  Let $\cH_i^\perp$ denote the quotient $(\sigma_i^* \cV_\Gamma)^{g_i}/ \sigma_i^* H_i$. By composing the map $\cV_\Gamma \to \cV_\Gamma|_{\Sigma_i}$ with 
\begin{equation}\label{e:perpres}
{\sigma_i}_* \sigma_i^* \cV_\Gamma \to {\sigma_i}_* (\sigma_i^* \cV_\Gamma)^{g_i} \to  {\sigma_i}_* \cH_i^\perp
\end{equation} for each $i$, we obtain the following surjective morphism of sheaves
\begin{equation}\label{e:surj} \cV_\Gamma \to \bigoplus_{i=1}^n  {\sigma_i}_* \cH_i^\perp.\end{equation}
Denote the kernel by $\widetilde{\cV}_\Gamma$.  A local computation shows that $\widetilde{\cV}_\Gamma$ is a vector bundle on $\f C_{\underline g}$ (see the discussion following Definition 6.1.2. in \cite{FJR2b}).

Observe that $QLG_{\underline Z}(Y, d) $ is the locus  in $QLG_{h,(\underline g)}(Y, d)$ for which the section of $\cV_\Gamma$ vanishes outside of $ \cH_i$ after restricting to $\Sigma_i$.  In other words, $QLG_{\underline Z}(Y, d) $ is defined as the intersection of $$QLG_{h,(\underline g)}(Y, d) := \underline{ ev}^{-1}(Y_{g_1} \times \cdots \times Y_{g_n})$$ with 
$$\tot \pi_*( \widetilde{\cV}_\Gamma)  \subset \tot \pi_* (\cV_\Gamma).$$

On $QLG_{\underline Z}(Y, d) $, there is a relative perfect obstruction theory given by $\RR \pi_*(\widetilde \cV_\Gamma)^\vee.$  By \cite[Lemma~6.1.5]{FJR2b},  on the corresponding absolute obstruction theory there exists a cosection
$$\text{Obs}_{QLG_{\underline Z}(Y, d)} \to \mathcal{O}_{QLG_{\underline Z}(Y, d)}.$$ The associated cosection localized virtual class \cite{KL} is supported on the proper stack $QLG_{\underline Z}(Crit(w), d)$.  We denote the cosection localized virtual class by 
$$[QLG_{\underline Z}(Crit(w), d)]^{vir} := [QLG_{\underline Z}(Y, d)]^{vir}_{loc}.$$
With this setup one can make the following definition:
\begin{definition}\label{d:ctinvts}
Let $\gamma_1, \ldots, \gamma_n$ be classes arising as the pushforward of $\beta_1, \ldots, \beta_n$ via maps $H^*(Z_i) \to \cc H_{ct}(Y, w)$.  Under Assumption~\ref{a:FJR},
define the GLSM invariant $\br{ \gamma_1 \psi^{k_1}, \ldots, \gamma_n \psi^{k_n}}_{h,n,d}^{Y,w}$ as
\begin{equation}\label{e:FJRdef} 
\int_{[QLG_{\underline Z}(Crit(w), d)]^{vir}} \bbr \cdot \prod_{i=1}^n \left(ev_i^*(\beta_i) \cup \psi_i^{k_i} \right).
\end{equation}
\end{definition}
Consider the following diagram
\begin{equation}\label{e:SZ}
\begin{tikzcd}
  U_{\underline Z} \ar[d, "\underline{\wt{ev}}_Z"] \ar[r, "\wt j"] &U \ar[d, "\underline{\wt{ev}}"]\\
Z_1  \times \cdots \times Z_n \ar[r, "j"] &IY \times \cdots \times IY  
\end{tikzcd}
\end{equation}
where $U_{\ul Z}$ is defined as the fiber product.  Because the map $A \to \pi_*( \cV_\Gamma|_\Sigma)$ is surjective, $\underline{\wt{ev}}$ is smooth.
We have the following relationship between $ [QLG_{\underline Z}(Crit(w), d)]^{vir}$ and the virtual class $[QLG_{h,n}(Y, d)]^{vir}$ of Definition~\ref{d:virt}.
\begin{proposition}\label{p:cosvir}
Let  $c\colon QLG_{\underline Z}(Crit(w), d) \to QLG_{\underline Z}(Y, d)$ denote the inclusion.  Then
$$ c_*\left([QLG_{\underline Z}(Crit(w), d)]^{vir}\right) = e_{loc}(\wt j^* E, \wt j^*\beta)$$
in $H_*(QLG_{\underline Z}(Y, d))$.
\end{proposition}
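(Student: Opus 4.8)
The plan is to factor the claimed identity through the \emph{ordinary} (non-cosection-localized) virtual class of $QLG_{\underline Z}(Y, d)$ attached to the relative obstruction theory $\RR\pi_*(\wt\cV_\Gamma)^\vee$, and then to recognize that class as the localized top Chern class $e_{loc}(\wt j^* E, \wt j^*\beta)$. Concretely, I would prove the chain
$$c_*\left([QLG_{\underline Z}(Crit(w), d)]^{vir}\right) = [QLG_{\underline Z}(Y, d)]^{vir} = e_{loc}(\wt j^* E, \wt j^*\beta),$$
where the middle term is the Behrend--Fantechi class of $\RR\pi_*(\wt\cV_\Gamma)^\vee$ over $\f M_{h,(\underline g)}(B\Gamma, d)^\theta_{\omlog}$.

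First I would establish the second equality by producing an explicit resolution, in parallel with Definitions~\ref{d:virtq} and~\ref{d:virt}. Applying $\pi_*$ to the defining short exact sequence $0 \to \wt\cV_\Gamma \to \cV_\Gamma \to \bigoplus_i \sigma_{i*}\cH_i^\perp \to 0$ coming from \eqref{e:surj}, and using the resolution $\cc A \xrightarrow{\delta} \cc B$ of Proposition~\ref{p:res1}, one obtains a distinguished triangle relating $\RR\pi_*\wt\cV_\Gamma$ to $[A \xrightarrow{\delta} B]$ and $\bigoplus_i \cH_i^\perp$ (the last term being $\pi$-acyclic since it is supported on the markings). The surjectivity of $A \to \pi_*(\cV_\Gamma|_\Sigma)$ in Proposition~\ref{p:res1} makes $\wt A := \ker\big(A \to \bigoplus_i\cH_i^\perp\big)$ a subbundle and collapses the triangle to a quasi-isomorphism $\RR\pi_*\wt\cV_\Gamma \simeq [\wt A \xrightarrow{\delta} B]$. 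The \emph{same} surjectivity is exactly the smoothness of $\underline{\wt{ev}}$ recorded after \eqref{e:SZ}, and it shows that $U_{\underline Z}$ is an open substack of $\tot(\wt A)$, hence smooth over $\f M_{h,(\underline g)}(B\Gamma, d)^\theta_{\omlog}$ with relative cotangent bundle $p^*\wt A^\vee$. Thus $[p^* B^\vee \to p^*\wt A^\vee]$ realizes $\RR\pi_*(\wt\cV_\Gamma)^\vee$ as a perfect obstruction theory for $QLG_{\underline Z}(Y, d) = \{\wt j^*\beta = 0\}$; since the tautological section of $A$ restricts on $U_{\underline Z}$ to that of $\wt A$, the pair $(p^*B,\, p^*\delta\circ\taut)$ coincides with $(\wt j^* E,\, \wt j^*\beta)$, and the associated virtual class is the localized top Chern class $e_{loc}(\wt j^* E, \wt j^*\beta)$.

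The first equality is then the fundamental compatibility of Kiem--Li cosection localization \cite{KL} with the localized Euler class: for a section of a vector bundle on a smooth ambient stack carrying a cosection of the resulting obstruction sheaf, pushing the cosection-localized class forward along the inclusion of the degeneracy locus recovers the unlocalized localized top Chern class. Here the ambient stack is $U_{\underline Z}$, the bundle is $\wt j^* E = p^*B$, the section is $\wt j^*\beta$, the cosection is the one built from the potential in \cite[Lemma~6.1.5]{FJR2b}, and its degeneracy locus is precisely $QLG_{\underline Z}(Crit(w), d)$. Applying the pushforward property with $\iota = c$ yields $c_*[QLG_{\underline Z}(Crit(w), d)]^{vir} = e_{loc}(\wt j^* E, \wt j^*\beta)$.

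I expect the main obstacle to be the bookkeeping in the second-equality step: one must verify that the cosection-localized class defining the left-hand side is built from precisely the obstruction theory $[p^* B^\vee \to p^*\wt A^\vee]$ on $U_{\underline Z}$ — equivalently, that the Kiem--Li cosection is compatible with this presentation — so that the general pushforward property applies verbatim. Reconciling the modified bundle $\wt\cV_\Gamma$ (which governs the cosection-localized theory) with the original $\cV_\Gamma$ (which governs $E$ and $\beta$), through the normal directions of $\underline Z$ encoded in $\bigoplus_i\cH_i^\perp$, is the only genuinely delicate point; the smoothness of $\underline{\wt{ev}}$ from \eqref{e:SZ} is what makes this reconciliation clean.
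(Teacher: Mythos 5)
Your proposal is correct and follows essentially the same route as the paper: the first equality is Kiem--Li's pushforward compatibility (\cite[Theorem~5.1]{KL}), and the second is established by exactly the paper's construction — forming $\wt{A}$ as the kernel of $A \to \bigoplus_i \cH_i^\perp$ via the surjectivity in Proposition~\ref{p:res1}, identifying $U_{\underline Z}$ with $U \cap \tot(\wt A)$, and realizing $\RR\pi_*(\wt\cV_\Gamma)^\vee$ by the two-term complex $[\wt j^*E^\vee \to \wt j^* p_A^*(\wt A)^\vee]$ so that the Behrend--Fantechi class is the localized top Chern class. The only cosmetic difference is that you form the kernel after pushing forward while the paper forms $\wt{\cc A}$ on the universal curve first; these agree by the same surjectivity.
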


\begin{proof}
By \cite[Theorem 5.1]{KL}, the pushforward of the cosection localized virtual class $c_*\left([QLG_{\underline Z}(Crit(w), d)]^{vir}\right)$ is simply the usual virtual class $[QLG_{\underline Z}(Y, d)]^{vir}$ on $QLG_{\underline Z}(Y, d)$, with respect to the relative perfect obstruction theory $\RR \pi_*(\widetilde \cV_\Gamma)^\vee.$  Thus it suffices to show that $[QLG_{\underline Z}(Y, d)]^{vir} = e_{loc}(\wt j^* E,\wt j^* \beta)$.

Recall the discussion preceding Definition~\ref{d:virt}.  Let $\cc A \to \cc B$ be a resolution of $\cV_\Gamma|_{\f C_{\underline g}}$ satisfying the properties of 
Proposition~\ref{p:res1}.  Compose the surjective map $\cc A \to \cV_\Gamma|_{\Sigma_i}$ with \eqref{e:perpres} and let $\wt{\cc A}$ denote the kernel.  We have the following commutative diagram of sheaves on $\f C_{\underline g}$, where all columns and rows are short exact sequences.
\begin{equation}\label{e:square0}
\begin{tikzcd}[cramped]
& 0 \ar[d]&0 \ar[d]&0 \ar[d]&\\
0\ar[r]& \wt \cV_\Gamma \ar[d] \ar[r]&\cV_\Gamma \ar[d] \ar[r]& \bigoplus_{i=1}^n  {\sigma_i}_* \cH_i^\perp \ar[d, "id"] \ar[r] &0\\
0\ar[r]& \wt{\cc A} \ar[d] \ar[r]& \cc A \ar[d] \ar[r]&\bigoplus_{i=1}^n  {\sigma_i}_* \cH_i^\perp \ar[d] \ar[r] &0\\
0\ar[r] & \cc B \ar[d] \ar[r, "id"]& \cc B \ar[r] \ar[d] & 0  & \\
& 0 &0 & &
\end{tikzcd}
\end{equation}
It follows from the second bullet in Proposition~\ref{p:res1} that the map $${\pi_{\underline g}}_*(\cc A) \to {\pi_{\underline g}}_*\left( \bigoplus_{i=1}^n  {\sigma_i}_* \cH_i^\perp\right)$$ is surjective, therefore $\RR^1 {\pi_{\underline g}}_*(\wt {\cc A}) = 0$.  Let $\wt A$ denote the vector bundle ${\pi_{\underline g}}_*(\wt {\cc A})$.

Pushing forward \eqref{e:square0}, we observe that $U_{\underline Z}$ is the intersection of $U$ with $\tot ( \wt A) \subset \tot (A)$.  Furthermore, the two-term complex 
$$[\wt j^* E^\vee \xrightarrow{\beta^\vee} \wt j^*  p_A^*(\wt A)^\vee]$$ gives a resolution of $\RR \pi_*(\widetilde \cV_\Gamma)^\vee$ on $U_{\underline Z}$, and $QLG_{\underline Z}(Y, d)$ is the zero locus of the section $\wt j^* \beta \in \Gamma( U_{\underline Z}, \wt j^* E)$.  By the general discussion in \cite[Section~6]{BF}, it follows that $[QLG_{\underline Z}(Y, d)]^{vir}$ is the localized top Chern class $e_{loc}(\wt j^* E, \wt j^* \beta)$ as desired.
\end{proof}

\subsection{The special case of proper evaluation maps}

In this section we explain how, if the evaluation maps 
$ ev_i\colon  QLG_{h,n}(Y, d) \to IY$
are proper,  there is a simple expression for compact type GLSM invariants that does not require use of a cosection.

 \begin{proposition}\label{p:propevinvts} 
 Let $\gamma_1, \ldots, \gamma_n$ be classes arising as the pushforward of $\beta_1, \ldots, \beta_n$ via maps 
$H^*(Z_i) \to \cc H_{ct}(Y, w)$ as in Section~\ref{ss:cosloc}, with each $Z_i$ satisfying Assumption~\ref{a:FJR}.   Assume that $ev_i\colon QLG_{h,n}(Y, d) \to IY$ is proper for $1 \leq i \leq n$.  Then \eqref{e:FJRdef} 
 is equal to
 \begin{equation}\label{e:propevdef} 
\int_{[QLG_{h,n}(Y, d)]^{vir}} \bbr \cdot {ev_1^c}^*({j_1}^c_* \beta_1) \cup \cdots \cup  {ev_n^c}^*({j_n}^c_* \beta_n) \cup \prod_{i=1}^n \left( \psi_i^{k_i} \right),
\end{equation}
where ${ev_i^c}^*$ and ${j_i^c}_*$ denote the pullback and pushforward in compactly supported cohomology, and $[QLG_{h,n}(Y, d)]^{vir}$ is the (non-localized) virtual class of Definition~\ref{d:virt}.
\end{proposition}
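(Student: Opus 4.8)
The plan is to reduce \eqref{e:FJRdef} to an integral against the non-localized virtual class $[QLG_{h,n}(Y, d)]^{vir}$ of Definition~\ref{d:virt} in three steps, matching the integrand along the way. First I would dispose of the cosection. By Proposition~\ref{p:cosvir}, the proper inclusion $c\colon QLG_{\underline Z}(Crit(w), d) \to QLG_{\underline Z}(Y, d)$ satisfies $c_*\left([QLG_{\underline Z}(Crit(w), d)]^{vir}\right) = e_{loc}(\wt j^* E, \wt j^*\beta)$. Every factor of the integrand in \eqref{e:FJRdef}---the locally constant function $\bbr$, the classes $ev_i^*(\beta_i)$ (the evaluation maps on $QLG_{\underline Z}(Y,d)$ factor through $Z_i$ by construction), and the classes $\psi_i^{k_i}$---is the restriction to $QLG_{\underline Z}(Crit(w),d)$ of a class already defined on $QLG_{\underline Z}(Y,d)$. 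Hence the projection formula for $c$ rewrites \eqref{e:FJRdef} as the integral of $\bbr \cdot \prod_i (ev_i^*(\beta_i)\cup \psi_i^{k_i})$ against the class $[QLG_{\underline Z}(Y,d)]^{vir} := e_{loc}(\wt j^* E, \wt j^*\beta)$ on $QLG_{\underline Z}(Y,d)$; properness of $Z(dw)$, and hence of $QLG_{\underline Z}(Crit(w),d)$, guarantees this degree is defined.

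The heart of the argument is to recognize this restricted virtual class as a refined Gysin pullback of the full one. Since $Y$, hence $IY$ and each smooth proper substack $Z_i$, is smooth, the product inclusion $j\colon \underline Z \hookrightarrow IY \times \cdots \times IY$ is a regular embedding, with an associated refined Gysin homomorphism $j^!$. I would establish
\[
j^![QLG_{h,n}(Y,d)]^{vir} = [QLG_{\underline Z}(Y,d)]^{vir}.
\]
By Definition~\ref{d:virt} the left-hand input is $e_{loc}(E,\beta)$, and by the previous step the right-hand side is $e_{loc}(\wt j^* E, \wt j^*\beta)$. Because the square \eqref{e:SZ} is Cartesian with $\underline{\wt{ev}}$ smooth, refined Gysin pullback along $j$ sends $[U]$ to $[U_{\underline Z}]$ and commutes with the localized top Chern class construction of \cite[Section~6]{BF}; this yields $j^! e_{loc}(E,\beta) = e_{loc}(\wt j^* E, \wt j^*\beta)$, which is exactly the claimed identity.

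Finally I would transport the integrand across $j$. The product evaluation map $\underline{ev} = (ev_1, \ldots, ev_n)\colon QLG_{h,n}(Y,d) \to IY \times \cdots \times IY$ is proper---each $ev_i$ is proper by hypothesis and $IY$ is separated, so $\underline{ev}$ factors through the graph of $(ev_2,\ldots,ev_n)$, a closed immersion, followed by $ev_1 \times \op{id}$---which is what makes the compactly supported pullbacks ${ev_i^c}^*$ in \eqref{e:propevdef} well defined and the integral there finite. Writing $\iota\colon QLG_{\underline Z}(Y,d)\hookrightarrow QLG_{h,n}(Y,d)$ for the inclusion, $\underline{ev}_Z$ for the corestriction of $\underline{ev}|_{QLG_{\underline Z}(Y,d)}$ to $\underline Z$, and $\underline\beta = \boxtimes_i \beta_i$, the base-change/projection formula for the Cartesian square with $\underline{ev}$ proper and $j$ a regular embedding gives
\[
{\underline{ev}^c}^*\big(j_*\underline\beta\big) \cap [QLG_{h,n}(Y,d)]^{vir} = \iota_*\big(\underline{ev}_Z^*(\underline\beta) \cap j^![QLG_{h,n}(Y,d)]^{vir}\big).
\]
Capping both sides with the pulled-back classes $\bbr$ and $\prod_i \psi_i^{k_i}$ and taking degrees, the right-hand side becomes the integral produced in the first two steps (using Step two and $\underline{ev}_Z^*(\underline\beta) = \prod_i ev_i^*(\beta_i)$), while the left-hand side is precisely \eqref{e:propevdef} (using $j_*\underline\beta = \boxtimes_i {j_i}^c_*\beta_i$). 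This chain of equalities proves the proposition.

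I expect the main obstacle to be the second step: verifying the commutativity $j^! e_{loc}(E,\beta) = e_{loc}(\wt j^* E, \wt j^*\beta)$ at the level of localized top Chern classes, and in particular checking that the Cartesian structure of \eqref{e:SZ} together with the smoothness of $\underline{\wt{ev}}$ really does reduce the refined Gysin pullback of the ambient localized class to the intrinsic one on $U_{\underline Z}$. One must track the bundles $E$, $\wt j^* E$ and their sections $\beta$, $\wt j^*\beta$ through the square and invoke the compatibility of refined Gysin maps with localized top Chern classes from \cite[Section~6]{BF} and \cite[Chapter~6]{Fulton}. Once this identification is in place, Steps one and three are formal consequences of the projection and base-change formulas.
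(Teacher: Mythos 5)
Your proposal is correct and follows essentially the same route as the paper's proof: remove the cosection via Proposition~\ref{p:cosvir}, identify the restricted virtual class with the refined Gysin pullback $j^!$ of $e_{loc}(E,\beta)$ using the Cartesian square \eqref{e:SZ} and the compatibility of localized top Chern classes with Gysin maps (\cite[Proposition~14.1, Theorem~6.2]{Fulton}), and then transfer the integrand to compactly supported classes on the full moduli space by push--pull. The only difference is bookkeeping in the last step---the paper pushes forward along $\underline{ev}$ to $IY^{\times n}$ while you push forward along the inclusion $\iota$---and you correctly identify the Gysin compatibility as the crux.
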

 
 \begin{proof}
 
 By assumption,
 $$QLG_{\underline Z}(Y, d) = U_{\underline Z} \cap QLG_{h,n}(Y, d) = \underline{ev}^{-1}(\underline Z)$$
is proper.  

By Proposition~\ref{p:cosvir}, equation \eqref{e:FJRdef} 
 is equal to 
\begin{align}\label{e:propev2} 
&\int_{QLG_{\underline Z}(Y, d)} \bbr \cdot \prod_{i=1}^n ev_i^*(\beta_i) \cup \prod_{i=1}^n \left( \psi_i^{k_i} \right)\cap e_{loc}(\wt j^* E,\wt j^* \beta)
\end{align}
which simplifies to
\begin{align} \label{e:propev3}
&\int_{QLG_{\underline Z}(Y, d)} \bbr \cdot \prod_{i=1}^n ev_i^*(\beta_i) \cup \prod_{i=1}^n \left( \psi_i^{k_i} \right)\cap \wt j^! (e_{loc}( E, \beta)) \\ \nonumber
=&\int_{QLG_{\underline Z}(Y, d)} \bbr \cdot \prod_{i=1}^n ev_i^*(\beta_i) \cup \prod_{i=1}^n \left( \psi_i^{k_i} \right)\cap j^! (e_{loc}( E, \beta)) \\ \nonumber
=&\int_{\underline Z} \bbr \cdot\underline{ev}_*\left({ \underline{ev}}^*(\underline{\beta})  \cup \prod_{i=1}^n \left( \psi_i^{k_i} \right)\cap j^! (e_{loc}( E, \beta))\right)
\\ \nonumber
=&\int_{\underline Z} \bbr \cdot \underline{\beta} \cap \underline{ev}_* j^! \left(  \prod_{i=1}^n \left( \psi_i^{k_i} \right) \cap e_{loc}( E, \beta)\right)
\\ \nonumber
=&\int_{\underline Z} \bbr \cdot \underline{\beta} \cap j^* {\underline{ev}}_* \left( \prod_{i=1}^n \left( \psi_i^{k_i} \right) \cap e_{loc}( E, \beta)\right).
\end{align}
The  equality of \eqref{e:propev2}  with the first line is by definition of $e_{loc}(E, \beta)$  and \cite[Proposition~14.1 (d)(ii)]{Fulton}.  The first equality is compatibility of Gysin pullbacks, \cite[Theorem~6.2 (c)]{Fulton}.  The final equality is \cite[Theorem~6.2 (a)]{Fulton}.  Here $\underline{\beta}$ denotes the exterior product of  $\beta_1$ through $\beta_n$.

Let $j_*^c\colon H^*(\underline Z) \to H^*_{cs}(\prod_{i=1}^n I Y)$ denote the pushforward to compactly supported cohomology.  Then \eqref{e:propev3} is equal to
\begin{align}\label{e:propev4} 
&\int_{\prod_{i=1}^n I Y}  j_*^c(\underline{\beta}) \cap {\underline{ev}}_* \left(\bbr \cdot  \prod_{i=1}^n \left( \psi_i^{k_i} \right) \cap e_{loc}( E, \beta)\right)
\\ \nonumber
=&\int_{QLG_{h,n}(Y, d)} \bbr \cdot {\underline{ev}^c}^*j_*^c(\underline{\beta})  \cup \prod_{i=1}^n \left( \psi_i^{k_i} \right)\cap  e_{loc}( E, \beta)
\\ \nonumber
=&\int_{QLG_{h,n}(Y, d)} \bbr \cdot {\underline{ev}^c}^*\left(\otimes_{i=1}^n {j_i}_*^c({\beta_i})  \right) \cup \prod_{i=1}^n \left( \psi_i^{k_i} \right)\cap  e_{loc}( E, \beta)
\\ \nonumber
=&\int_{QLG_{h,n}(Y, d)} \bbr \cdot {ev_1^c}^*({j_1}^c_* \beta_1) \cup \cdots \cup  {ev_n^c}^*({j_n}^c_* \beta_n) \cup \prod_{i=1}^n \left( \psi_i^{k_i} \right)\cap  e_{loc}( E, \beta)
\end{align}
The first and second line are each applications of the projection formula (with respect to the maps $j$ and $\underline{ev}$ respectively).  The second equality is simply by functoriality of the Kunneth formula, which holds for compactly supported cohomology as well.

The last line of \eqref{e:propev4}  is exactly \eqref{e:propevdef}, finishing the proof.
\end{proof}

\begin{lemma}\label{l:ccs}
If $n\geq 2$, the expression $ {ev_1^c}^*({j_1}^c_* \beta_1) \cup \cdots \cup  {ev_n^c}^*({j_n}^c_* \beta_n)$ in Proposition~\ref{p:propevinvts} is equal to
$ev_1^*(\phi^w(\gamma_1)) \cup_{cs} ev_2^*(\phi^w( \gamma_2)) \cup \cdots \cup  ev_n^*(\phi^w( \gamma_n))$.
\end{lemma}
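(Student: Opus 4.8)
The plan is to recognize the compactly supported class $ {j_i}^c_* \beta_i$ as a distinguished \emph{lift} of $\phi^w(\gamma_i)$, and then to transport this observation to the moduli space along the proper evaluation maps. First I would recall from Lemma~\ref{l:jfactors} that $j_{i*} = \phi^w \circ j'_{i*}$, so that $j_{i*}(\beta_i) = \phi^w(\gamma_i)$ in $H^*_{CR,ct}(Y)$. Since each $Z_i$ is proper, the ordinary pushforward $j_{i*}$ factors through compactly supported cohomology as $\phi^{cs} \circ {j_i}^c_*$ (this is immediate from the construction of $j_*$ in Definition~\ref{d:BMpush}). Combining these gives $\phi^{cs}({j_i}^c_* \beta_i) = \phi^w(\gamma_i)$, i.e. ${j_i}^c_* \beta_i \in H^*_{CR,cs}(Y)$ is a lift of $\phi^w(\gamma_i)$ in the sense of the definition preceding \eqref{e:prodcomp}.

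Next I would push this down to the moduli space. Because $ev_i$ is assumed proper, the forget-support map is natural with respect to the pullback ${ev_i^c}^*$, i.e. $\phi^{cs} \circ {ev_i^c}^* = ev_i^* \circ \phi^{cs}$. Applying this to the lift above yields $\phi^{cs}\bigl({ev_i^c}^*({j_i}^c_* \beta_i)\bigr) = ev_i^*(\phi^w(\gamma_i))$, so that ${ev_i^c}^*({j_i}^c_* \beta_i)$ is a compactly supported lift of $ev_i^*(\phi^w(\gamma_i))$ on $QLG_{h,n}(Y,d)$. In particular, by the definition of the compact type product $\cup_{cs}$, the right-hand side of the lemma may be computed using ${ev_1^c}^*({j_1}^c_* \beta_1)$ as the lift of its first factor:
$$ev_1^*(\phi^w(\gamma_1)) \cup_{cs} ev_2^*(\phi^w(\gamma_2)) = {ev_1^c}^*({j_1}^c_* \beta_1) \cup ev_2^*(\phi^w(\gamma_2)).$$

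The final step is a telescoping argument using the compatibility \eqref{e:prodcomp}. The class ${ev_1^c}^*({j_1}^c_* \beta_1)$ is compactly supported, and for any compactly supported $\mu$ the identity $\mu \cup \nu = \mu \cup \phi^{cs}(\nu)$ (the moduli-space analogue of \eqref{e:prodcomp}) lets me replace the ordinary pullback $ev_i^*(\phi^w(\gamma_i)) = \phi^{cs}\bigl({ev_i^c}^*({j_i}^c_* \beta_i)\bigr)$ by the compactly supported class ${ev_i^c}^*({j_i}^c_* \beta_i)$ without changing the product, for each $i = 2, \ldots, n$ in turn. After applying this successively, the right-hand side becomes exactly ${ev_1^c}^*({j_1}^c_* \beta_1) \cup \cdots \cup {ev_n^c}^*({j_n}^c_* \beta_n)$, which is the left-hand side.

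I expect the main obstacle to be purely bookkeeping: verifying that the forget-support map is natural with respect to the proper pullback ${ev_i^c}^*$, and that the module compatibility \eqref{e:prodcomp} holds on the noncompact orbifold moduli stack $QLG_{h,n}(Y,d)$ rather than on $Y$ itself. The hypothesis $n \geq 2$ is precisely what guarantees that at least one factor (the first) is carried as a genuine compactly supported lift throughout, so that the leading $\cup_{cs}$ is well-defined and every intermediate product remains compactly supported, making each replacement valid.
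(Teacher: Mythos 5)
Your argument is correct and is essentially the paper's own proof read in the opposite direction: both rest on the identities $\phi^{cs}\circ{f^c}^*=f^*\circ\phi^{cs}$ and $\phi^{cs}\circ f^c_*=f_*\circ\phi^{cs}$ for proper $f$, the factorization $j_{i*}=\phi^w\circ j'_{i*}$ from Lemma~\ref{l:jfactors}, and the compatibility \eqref{e:prodcomp}. Your explicit telescoping over the factors $i\geq 3$ only spells out what the paper leaves implicit, so there is nothing to add.
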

\begin{proof}
As shown in \cite{Shoe1}, for a proper map $f$, $\phi^{cs} \circ {f^c}^* = f^* \circ \phi^{cs}$, and $\phi^{cs} \circ f^c_* = f_* \circ \phi^{cs}$.
Then under our assumption that $n \geq 2$, by \eqref{e:prodcomp} we have 
\begin{align*}
&{ev_1^c}^*({j_1}^c_* \beta_1)  \cup  {ev_2^c}^*({j_2}^c_* \beta_2) \\
=&\phi^{cs}({ev_1^c}^*({j_1}^c_* \beta_1)) \cup_{cs}   \phi^{cs}({ev_2^c}^*({j_2}^c_* \beta_2)) \\
=& ev_1^*({j_1}_*\phi^{cs}( \beta_1)) \cup_{cs}   ev_2^*({j_2}_*\phi^{cs}( \beta_2))\\
=& ev_1^*({j_1}_*( \beta_1)) \cup_{cs}   ev_2^*({j_2}_*( \beta_2))\\
=& ev_1^*(\phi^w(\gamma_1)) \cup_{cs}   ev_2^*(\phi^w( \gamma_2))
\end{align*}
\end{proof}

Note that in the presence of proper evaluation maps,
 \eqref{e:propevdef} is defined regardless of Assumption~\ref{a:FJR}.
 Motivated then by Proposition~\ref{p:propevinvts}, we use \eqref{e:propevdef} and Lemma~\ref{l:ccs}
 to define compact type GLSM invariants in the case that the evaluation maps are proper.   

\begin{definition}\label{d:propevinvts} Let $\gamma_1, \ldots, \gamma_n$ be classes in $\cc H_{ct}(Y, w)$.
Assume that $n\geq 2$ and the evaluation maps $ev_i$ are proper for $1 \leq i \leq 2$.  Define the GLSM invariant $\br{ \gamma_1 \psi^{k_1}, \ldots, \gamma_n \psi^{k_n}}_{h,n,d}^{Y,w}$ as:
 \begin{equation}\label{e:propevdef2} 
\int_{ [QLG_{h,n}(Y, d)]^{vir}}  \bbr \cdot ev_1^*(\phi^w(\gamma_1)) \cup_{cs} ev_2^*(\phi^w( \gamma_2)) \cup \cdots \cup  ev_n^*(\phi^w( \gamma_n)) \cup \prod_{i=1}^n \left( \psi_i^{k_i} \right).
\end{equation}
 \end{definition}

\begin{remark}
We note that Definition~\ref{d:ctinvts} and \eqref{e:propevdef} depend, a-priori,  on the choices of $\beta_1, \ldots, \beta_n$ which are pushed forward via ${j_i'}_*$ to obtain $\gamma_1, \ldots, \gamma_n$.    Definition~\ref{d:propevinvts} on the other hand does not rely on such choices.
\end{remark}

Elements of the kernel of the map 
$$\phi^w\colon \cc H(Y, w) \to H^*_{CR}(Y)$$
are often referred to as \emph{broad} sectors.  There are conjectures (see, e.g. \cite[Conjecture 2.13]{CJR}) about when GLSM invariants with broad insertions are zero.  Such results are known as \emph{broad vanishing}.  The following ``corollary'' is immediate from our definition.
\begin{corollary}\label{c:bv}
Broad vanishing holds for compact type insertions in the presence of proper evaluation maps.  More precisely, 
assume that $ev_i\colon QLG_{h,n}(Y, d) \to IY$ is proper for $1 \leq i \leq n$, and that $n \geq 2$.
For $\gamma_1, \ldots, \gamma_n \in \cc H_{ct}(Y, w)$, the invariant $\br{ \gamma_1 \psi^{k_1}, \ldots, \gamma_n \psi^{k_n}}_{h,n,d}^{Y,w}$ is zero if any $\gamma_i$ lies in the broad subspace $\ker(\phi^w) \subset \cc H(Y, w)$.
\end{corollary}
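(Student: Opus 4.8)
The plan is to derive Corollary~\ref{c:bv} directly from Definition~\ref{d:propevinvts}, which expresses the GLSM invariant as an integral whose insertion classes have already been pushed through the map $\phi^w$. The entire content of the corollary is that the integrand vanishes as soon as one of the inputs is broad. First I would observe that, by the hypothesis $n \geq 2$ together with the properness of the evaluation maps, the invariant $\br{\gamma_1\psi^{k_1},\ldots,\gamma_n\psi^{k_n}}_{h,n,d}^{Y,w}$ is indeed given by \eqref{e:propevdef2}, so it suffices to show that this integral is zero whenever some $\gamma_i \in \ker(\phi^w)$.

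The key step is to isolate the factor $\phi^w(\gamma_i)$ appearing in the integrand. Each insertion class enters \eqref{e:propevdef2} only through $\phi^w(\gamma_i)$: for $i=1,2$ via $ev_i^*(\phi^w(\gamma_i))$ (combined into the product $\cup_{cs}$), and for $i \geq 3$ via $ev_i^*(\phi^w(\gamma_i))$ in ordinary cohomology. By definition, $\gamma_i$ being broad means exactly that $\phi^w(\gamma_i) = 0$ in $H^*_{CR}(Y)$. I would then note that the pullback $ev_i^*$ is a ring homomorphism (or at least a linear map) sending $0$ to $0$, so $ev_i^*(\phi^w(\gamma_i)) = 0$; since cup product (and the compact-type product $\cup_{cs}$, which by its definition $\gamma_1 \cup_{cs}\gamma_2 = \wt\gamma_1\cup\gamma_2$ is linear in each entry) annihilates a zero factor, the entire integrand vanishes. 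Hence the integral is zero.

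There is essentially one point that requires a small amount of care rather than being purely formal: the roles of the first two insertions are asymmetric from the remaining ones, since $\gamma_1$ and $\gamma_2$ are combined through $\cup_{cs}$ rather than ordinary cup product. I would therefore split into cases according to whether the broad insertion is $\gamma_1$ (or $\gamma_2$) or one of $\gamma_3,\ldots,\gamma_n$. In the latter case the argument is immediate because $\cup$ kills a zero factor. In the former case I would invoke bilinearity of $\cup_{cs}$ as established in Section~\ref{s:ctcpp}: since $ev_1^*(\phi^w(\gamma_1)) = 0$, the product $ev_1^*(\phi^w(\gamma_1)) \cup_{cs} ev_2^*(\phi^w(\gamma_2))$ vanishes regardless of the second factor. (One may also appeal to the symmetry afforded by $n\geq 2$ to reduce to placing the broad class in either of the first two slots.)

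I do not expect any serious obstacle here; the corollary is labeled a ``corollary'' in quotation marks precisely because, under Definition~\ref{d:propevinvts}, broad vanishing is built into the definition rather than being a substantive theorem. The only thing to verify is the purely formal fact that every insertion enters linearly through $\phi^w$, so that a broad input forces the integrand to vanish identically before integration. The proof is therefore a short, essentially one-line deduction, with the brief case split on the first two markings being the only bookkeeping needed.
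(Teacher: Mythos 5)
Your proposal is correct and matches the paper's (implicit) argument: the paper offers no written proof, stating only that the corollary is immediate from Definition~\ref{d:propevinvts}, and your write-up is exactly the spelled-out version of that observation, including the only point needing care, namely bilinearity and well-definedness of $\cup_{cs}$ so that a vanishing factor $\phi^w(\gamma_i)=0$ kills the integrand in the first two slots as well as the others.
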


In practice Definition~\ref{d:propevinvts} is quite restrictive.  Evaluation maps from moduli spaces of LG maps are rarely proper.  In the next section we show that in the special case that $h=0$ and $n=2$, the evaluation maps are always proper, and the definition applies.  We will see in later sections that this case is of special relevance to mirror symmetry.

\section{Genus zero, two marked points}\label{s:g02mp}

Let $\pi\colon \f C \to \f M_{0,2}(BG, d)^\theta$ denote the universal curve.  The following observation is immediate from the definitions. 
\begin{lemma}[{\cite[Lemma~3.1]{HS}}] \label{l:chain} 
Let $\spec \CC \to  \f M_{0,2}(BG, d)^\theta$ be a morphism and let $\cC$ be the curve obtained by pulling back $\pi$.
The coarse underlying curve $\underline{\cC}$ is  a chain of $\PP^1$'s with a marked point at either end of the chain.
The same holds for $\f M_{0,2}(B\Gamma, d)_{\omega_{\pi, \op{log}}}^\theta$.
\end{lemma}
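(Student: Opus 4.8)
\emph{Proof proposal.} The plan is to reduce the statement to an elementary counting argument on the dual graph of the coarse curve, driven by the genus-zero hypothesis and the stability condition built into $\f M_{0,2}(BG, d)^\theta$. First I would recall the standard dictionary. Since $\cC$ has arithmetic genus zero, its coarse curve $\underline{\cC}$ is a connected nodal curve of genus zero, hence a tree of $\PP^1$'s; let $T$ be its dual graph, with one vertex per irreducible component and one edge per node, so that $T$ is a tree. For an irreducible component $\cC'$, write $n(\cC')$ for the number of nodes lying on it (its valence in $T$) and $m(\cC')$ for the number of marked points on it. The computation $\deg(\omega_{\pi,\op{log}}|_{\cC'}) = n(\cC') + m(\cC') - 2$ then identifies three cases: $\omega_{\pi,\op{log}}|_{\cC'}$ is negative, trivial, or positive according as $\cC'$ carries fewer than, exactly, or more than two special points.

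The key step is to invoke the stability condition in the correct form. The stability underlying $\f M_{0,2}(BG, d)^\theta$ requires $\omega_{\pi,\op{log}} \otimes \cc L_\theta^{0^+}$ to be positive on every component; the condition highlighted in Definition~\ref{d:frakG}, positivity of $\cc L_\theta$ on components with $\omega_{\pi,\op{log}}$ trivial, is precisely the boundary case, and in particular it excludes every component with $\deg(\omega_{\pi,\op{log}}|_{\cC'}) < 0$. Hence $n(\cC') + m(\cC') \geq 2$ for all $\cC'$. Summing over the $V$ vertices of $T$, and using that a tree has $V-1$ edges while there are two marked points in total,
\[ \sum_{\cC'} \big( n(\cC') + m(\cC') \big) = 2(V-1) + 2 = 2V. \]
As each of the $V$ summands is at least $2$ and they sum to $2V$, every summand equals exactly $2$: each component carries exactly two special points and $\omega_{\pi,\op{log}}|_{\cC'}$ is trivial throughout.

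It remains to translate this back into graph-theoretic terms. The relation $n(\cC') + m(\cC') = 2$ forces every vertex of valence $\geq 2$ to have valence exactly $2$ and to carry no marking, while each leaf (valence $1$) carries exactly one of the two markings. A tree whose interior vertices all have valence two is a path, and its two endpoints are the marked leaves; when $V = 1$ the single $\PP^1$ carries both markings. This is exactly the assertion that $\underline{\cC}$ is a chain of $\PP^1$'s with a marked point at either end. The identical argument—genus zero, two markings, and positivity of $\omega_{\pi,\op{log}} \otimes \cc L_\nu^{0^+}$—applies verbatim to the $\Gamma$-version $\f M_{0,2}(B\Gamma, d)^\theta_{\omlog}$, since only the sign of $\deg(\omega_{\pi,\op{log}}|_{\cC'})$ enters, giving the second claim.

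The main obstacle I anticipate is purely a matter of correctly invoking the stability condition: the chain structure does not follow from positivity of $\cc L_\theta$ on $\omega_{\pi,\op{log}}$-trivial components alone, but rather from the exclusion of negative-degree (destabilizing) rational tails, which is what prevents branching and forces every component to be $\omega_{\pi,\op{log}}$-trivial. Once that exclusion is recorded in the form $\deg(\omega_{\pi,\op{log}}|_{\cC'}) \geq 0$, the rest is the elementary Euler-characteristic count above. A minor point to verify is the orbifold-to-coarse comparison, namely that the special-point count governing the three cases is unaffected by the twisted structure, which is routine.
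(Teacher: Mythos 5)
Your proof is correct; the paper itself offers no argument for this lemma, simply citing \cite[Lemma~3.1]{HS} and calling it immediate from the definitions, and your Euler-characteristic count on the dual tree ($\sum (n+m) = 2(V-1)+2 = 2V$ with each summand $\geq 2$, forcing equality and hence a path) is the standard way to establish it. Your closing caveat is also well taken and worth keeping: the chain structure needs the full $0^+$-stability condition ($\omlog \otimes \cc L_\theta^{\epsilon}$ ample for every $\epsilon>0$, which forces $\deg(\omlog|_{\cC'}) \geq 0$ on every component), whereas Definition~\ref{d:frakG} as literally written records only the boundary case and would not by itself exclude unmarked rational tails.
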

Lemma~\ref{l:chain} implies the following simple but important fact.
\begin{lemma}\label{l:cantriv}
Over both  $\f M_{0,2}(BG, d)^\theta$ and $ \f M_{0,2}(B\Gamma, d)_{\omlog}^\theta$, the log-canonical bundle $\omega_{\pi, \op{log}} = \omega_\pi(\Sigma_1 + \Sigma_2)$ is canonically trivialized by a nowhere-vanishing global section $\cc O_{\f C} \to \omega_{\pi, \op{log}}$ characterized by the property that on any fiber the residue around $p_1$ is equal to $2\pi i$.
\end{lemma}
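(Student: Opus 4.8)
The plan is to prove the statement fiberwise using the explicit geometry supplied by Lemma~\ref{l:chain}, and then to promote the fiberwise construction to a statement over the whole moduli stack by cohomology and base change together with the residue homomorphism. Since Lemma~\ref{l:chain} gives the same fiber geometry (a chain of $\PP^1$'s with a marking at each end) over both $\f M_{0,2}(BG, d)^\theta$ and $\f M_{0,2}(B\Gamma, d)_{\cc O_{\cC}}^\theta$, a single argument handles both cases at once. Throughout I write $\cC$ for the fiber over a geometric point and $\omega_{\pi, \op{log}} = \omega_\pi(\Sigma_1 + \Sigma_2)$ for the relative log-canonical bundle.

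First I would build the section on a single fiber. Write $\cC = C_0 \cup C_1 \cup \cdots \cup C_m$ with each $C_i \cong \PP^1$, where $C_i$ meets $C_{i+1}$ in a node, $p_1 \in C_0$, and $p_2 \in C_m$. On each component choose a coordinate $z_i$ placing the two special points (node or marking) at $0$ and $\infty$, oriented along the chain. The form $dz_i/z_i$ has simple poles at $0$ and $\infty$ with opposite residues $\pm 2\pi i$ and no zeros elsewhere; viewed as a section of $\omega_{C_i}(0+\infty) \cong \cc O_{C_i}$ it is nowhere vanishing. At each node the residues contributed by the two adjacent branches are opposite, so the local forms satisfy the residue-matching condition defining the dualizing sheaf and glue to a global section $s$ of $\omega_{\pi, \op{log}}|_{\cC}$ that is nowhere vanishing, with $\Res_{p_1}(s) = 2\pi i$ and $\Res_{p_2}(s) = -2\pi i$ (consistent with the residue theorem). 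Since $\cC$ is connected of arithmetic genus $0$, I compute $h^0(\cC, \omega_{\pi,\op{log}}|_{\cC}) = h^0(\cC, \cc O_{\cC}) = 1$ and, by Serre duality, $h^1(\cC, \omega_{\pi,\op{log}}|_{\cC}) = h^0(\cC, \cc O(-p_1 - p_2)) = 0$; thus $s$ spans the space of sections and the normalization $\Res_{p_1}(s) = 2\pi i$ determines it uniquely.

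Next I would globalize. Because $h^0 = 1$ and $h^1 = 0$ on every fiber, cohomology and base change shows $\pi_* \omega_{\pi, \op{log}}$ is a line bundle whose formation commutes with base change, and $R^1\pi_* \omega_{\pi, \op{log}} = 0$. The Poincaré residue along the marking divisor $\Sigma_1$ gives a canonical map $\Res_{p_1}\colon \pi_*\omega_{\pi, \op{log}} \to \pi_* \cc O_{\Sigma_1} \cong \cc O$ which fiberwise sends $s$ to the nonzero constant $2\pi i$, hence is an isomorphism of line bundles by Nakayama. Defining $s$ globally as the preimage of the constant $2\pi i$ yields the global section $\cc O_{\f C} \to \omega_{\pi, \op{log}}$, and its fiberwise nonvanishing (established above) shows it trivializes $\omega_{\pi, \op{log}}$, giving exactly the asserted canonical trivialization.

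The main obstacle, as I see it, is not the degree bookkeeping but the careful treatment of the twisted structure: one must interpret $dz_i/z_i$ and the residue matching on the twisted curve rather than its coarse space, verify that the balanced orbifold structure at the nodes preserves the residue-matching condition defining $\omega_{\pi, \op{log}}$, and confirm that the markings $\Sigma_i$ (being trivialized gerbes with sections) behave as honest smooth points so that $\pi_*\cc O_{\Sigma_1} \cong \cc O$ and $\Res_{p_1}$ has its classical meaning. Once the residue sequence $0 \to \omega_\pi \to \omega_{\pi,\op{log}} \xrightarrow{\Res} \cc O_{\Sigma_1} \oplus \cc O_{\Sigma_2} \to 0$ is set up on the twisted universal curve and combined with $\pi_*\omega_\pi = 0$ (genus $0$) and the global residue theorem, the remaining steps are formal.
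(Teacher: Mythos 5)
Your proposal is correct and follows essentially the same route as the paper: both identify $\pi_*\omega_{\pi,\op{log}}$ with the structure sheaf via the residue map at $\Sigma_1$, define the section as the preimage of the constant $2\pi i$, and verify nonvanishing fiberwise using the chain structure from Lemma~\ref{l:chain} together with the fact that $\omega_{\pi,\op{log}}$ has degree zero on each irreducible component. The only difference is technical: the paper extracts the isomorphism from the long exact sequence for $0 \to \omega_\pi(\Sigma_2) \to \omega_{\pi,\op{log}} \to \omega_{\pi,\op{log}}|_{\Sigma_1} \to 0$ using the vanishing of $R^0\pi_*$ and $R^1\pi_*$ of $\omega_\pi(\Sigma_2)$, whereas you compute the fiberwise cohomology of $\omega_{\pi,\op{log}}$ directly and invoke cohomology and base change.
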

\begin{proof}
  We will prove the statement for $ \f M_{0,2}(B\Gamma, d)_{\omlog}^\theta$ but the proof is identical for $\f M_{0,2}(BG, d)^\theta$.

Consider the long exact sequence
\begin{align}\label{e:LESsec} 0 \to &\pi_* (\omega_\pi(\Sigma_2)) \to  \pi_* (\omega_\pi(\Sigma_1 + \Sigma_2)) \xrightarrow{\rho} \omega_\pi(\Sigma_1 + \Sigma_2)|_{\Sigma_1} \\ \to  R^1&\pi_* (\omega_\pi(\Sigma_2)) \to R^1\pi_* (\omega_\pi(\Sigma_1 + \Sigma_2)) \to 0. \nonumber \end{align}
By degree considerations and an induction argument, $H^0(\omega_{\cC}(p_2)) = 0$ for any source curve $\cC$.
By Serre duality $H^1 (\omega_{\cC}(p_2))$ is isomorphic to  $H^0 (\cc O_{\cC}(-p_2))$ which is also zero for any source curve $\cC$.  Therefore $$R^0\pi_* (\omega_\pi(\Sigma_2)) = R^1\pi_* (\omega_\pi(\Sigma_2)) = 0$$ and the map $\rho$ is an isomorphism.  Furthermore, the residue map provides a canonical isomorphism 
\[\op{Res}: \omega_\pi(\Sigma_1 + \Sigma_2)|_{\Sigma_1} \xrightarrow{=} \cc O_{ \f M_{0,2}(B\Gamma, d)_{\omlog}^\theta}.\]
The constant section corresponding to $2 \pi i \in \Gamma(  \f M_{0,2}(B\Gamma, d)_{\omlog}^\theta, \cc O_{ \f M_{0,2}(B\Gamma, d)_{\omlog}^\theta})$ is therefore the image of a global section 
\[s \in \Gamma(  \f M_{0,2}(B\Gamma, d)_{\cc O_{\f C}}^\theta, \pi_*(\omega_\pi(\Sigma_1 + \Sigma_2))).\]
This section is itself defined by a global map $f\colon \cc O_{\cC} \to \omega_\pi(\Sigma_1 + \Sigma_2)$.  

We will show that $f$ is nowhere-vanishing, which can be checked fiberwise.  Let $\spec \CC \to   \f M_{0,2}(B\Gamma, d)_{\omlog}^\theta$ be a morphism and let $\cC$ be the corresponding curve.  
By Lemma~\ref{l:chain}, the log canonical bundle $\omega_{\cC}(p_1 + p_2)$ is degree zero on each irreducible component, and is therefore trivial.
By considering the composition $\op{Res} \circ \rho$, we see that locally $f$ sends $1$ to $dx/x$, where $x$ is a local coordinate around $p_1$.  
This local section extends uniquely to a section of $\omega_{\cC}(p_1 + p_2)$, which is nowhere vanishing because $\omega_{\cC}(p_1 + p_2)$ is trivial.
\end{proof}
\begin{proposition}\label{p:stackeq}
There is a canonical isomorphism $\f M_{0,2}(BG, d)^\theta \cong \f M_{0,2}(B\Gamma, d)_{\omlog}^\theta$.  Under this isomorphism, the universal curves are identified.  The principal bundles are related via
\begin{equation}\label{e:rel1} \cP_\Gamma = \cP_G \times_{\langle J \rangle} \CC^*\end{equation}
and fit into an exact sequence
\begin{equation}\label{e:rel2} 0 \to \cP_G \to \cP_\Gamma \to \xi_*(\cP_\Gamma) \cong \omega_{\pi, \op{log}}^\circ \cong \cc O_{\f C}^\circ \to 0.\end{equation}
\end{proposition}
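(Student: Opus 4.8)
The plan is to produce the equivalence directly from the group-theoretic exact sequence $0 \to G \to \Gamma \xrightarrow{\xi} \CC^* \to 0$ of \eqref{e:xi} together with the canonical trivialization of $\omlog$ from Lemma~\ref{l:cantriv}. The essential observation is that the only datum distinguishing an object of $\f M_{0,2}(B\Gamma, d)^\theta_{\omlog}$ from its underlying $\Gamma$-bundle is the isomorphism $\eta\colon \xi_*(\cP_\Gamma)\cong\omlog^\circ$, and that—because $\omlog$ is canonically trivial—such an $\eta$ is the same as a trivialization of the $\CC^*$-bundle $\xi_*(\cP_\Gamma)$, which in turn is the same as a reduction of structure group of $\cP_\Gamma$ along the inclusion $G=\ker(\xi)\hookrightarrow\Gamma$.

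First I would construct a functor $\Phi\colon \f M_{0,2}(BG, d)^\theta \to \f M_{0,2}(B\Gamma, d)^\theta_{\omlog}$ sending $(\cC, \cP_G)$ to $(\cC, \cP_\Gamma)$ with $\cP_\Gamma := \cP_G\times_G\Gamma$ the extension of structure group, leaving the curve $\cC$ unchanged. Since $\xi|_G$ is trivial, the associated $\CC^*$-bundle $\xi_*(\cP_\Gamma) = \cP_G\times_G\CC_{\xi|_G}$ is canonically trivial; composing this canonical trivialization $\xi_*(\cP_\Gamma)\cong\cc O_{\f C}^\circ$ with the isomorphism $\cc O_{\f C}^\circ\cong\omlog^\circ$ of Lemma~\ref{l:cantriv} supplies the required $\eta$. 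In the reverse direction I would send $(\cC, \cP_\Gamma, \eta)$ to the reduction $\cP_G\subset\cP_\Gamma$ cut out by $\eta$: after trivializing $\omlog^\circ$ via Lemma~\ref{l:cantriv}, $\eta$ becomes a trivialization of $\xi_*(\cP_\Gamma)$, and I take $\cP_G$ to be the preimage of $1$ under $\cP_\Gamma\to\xi_*(\cP_\Gamma)\cong\cc O_{\f C}^\circ$, a principal $G$-bundle since $G=\ker(\xi)$. These two assignments are manifestly mutually inverse, and since both use only canonical data they define a canonical isomorphism of stacks that automatically identifies the universal curves.

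Next I would check that the numerical and stability decorations match under $\Phi$. For the degree, the $G$-degree of $\cP_\Gamma = \cP_G\times_G\Gamma$ is computed on characters of $\Gamma$ by restriction to $G$, so it projects to $d$, while the $c=1$ condition $\xi_*(\cP_\Gamma)\cong\omlog^\circ$ is exactly what $\eta$ provides; here I note that on a genus-zero two-pointed chain $\omlog$ has degree zero on every component (as in the proof of Lemma~\ref{l:cantriv}), so this is consistent. For stability, the key simplification is that for any lift $\nu$ of $\theta$ one has $\cc L_\nu = \cP_\Gamma\times_\Gamma\CC_\nu = \cP_G\times_G\CC_{\nu|_G} = \cP_G\times_G\CC_\theta = \cc L_\theta$, so the positivity conditions defining the $\theta$-superscript on the two sides coincide termwise, on the same set of components. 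Representability transfers both ways: since $[\cP_\Gamma]$ factors as $\cC\xrightarrow{[\cP_G]}BG\to B\Gamma$ and $BG\to B\Gamma$ is representable, $[\cP_\Gamma]$ is representable exactly when $[\cP_G]$ is (injectivity on automorphism groups is inherited in both directions).

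Finally I would deduce the two displayed formulas. Writing $\Gamma$ as the central product $(G\times\CC^*)/\langle J\rangle$, with $\langle J\rangle$ embedded via $j\mapsto(j, j^{-1})$ using $J\in G$ and $J\in\CC^*_R\cong\CC^*$, a direct computation identifies $\cP_G\times_G\Gamma$ with $(\cP_G\times\CC^*)/\langle J\rangle = \cP_G\times_{\langle J\rangle}\CC^*$, giving \eqref{e:rel1}. The sequence \eqref{e:rel2} is then the sequence of associated bundles obtained by applying the $\cP_\Gamma$-construction to $0\to G\to\Gamma\xrightarrow{\xi}\CC^*\to0$: the reduction $\cP_G$ is the kernel, the cokernel is $\xi_*(\cP_\Gamma)$, and the two identifications $\xi_*(\cP_\Gamma)\cong\omlog^\circ$ and $\omlog^\circ\cong\cc O_{\f C}^\circ$ are $\eta$ and Lemma~\ref{l:cantriv}. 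I expect the only genuinely delicate point to be the bookkeeping of the $c=1$ twist and of the $\langle J\rangle$-action in \eqref{e:rel1} against sign and convention choices; none of this is conceptually hard, but it must be done carefully so that the canonical trivialization of $\omlog$ is precisely the one making $\eta$ agree with the trivial bundle coming from $\xi|_G$.
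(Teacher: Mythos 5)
Your proposal is correct and follows essentially the same route as the paper: both use Lemma~\ref{l:cantriv} to reduce to the $\cc O_{\f C}$-twisted moduli stack, then exhibit mutually inverse functors given by extension of structure group (the paper writes $\cP_G\times_{\langle J\rangle}\CC^*$ directly, which your central-product decomposition of $\Gamma$ identifies with $\cP_G\times_G\Gamma$) and by taking the kernel of $\cP_\Gamma\to\xi_*(\cP_\Gamma)\xrightarrow{\eta}\cc O_{\cC}^\circ$. Your additional checks of the stability condition and representability are correct and slightly more explicit than what appears in the paper.
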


\begin{proof}
By Lemma~\ref{l:chain}, any fiber of the universal curve $\f C \to \f M_{0,2}(B\Gamma, d)_{\omlog}^\theta$ consists of a chain of irreducible components.  By the previous proposition, there is a canonical isomorphism $ \cc O_{\f C} \to \omega_{\pi, \op{log}}  $  over $\f M_{0,2}(B\Gamma, d)_{\omlog}^\theta$.  We can therefore identify $\f M_{0,2}(B\Gamma, d)_{\omlog}^\theta$ with $\f M_{0,2}(B\Gamma, d)_{\cc O_{\cC}}^\theta$.  

Therefore, it suffices to find a canonical isomorphism 
$i_1\colon \f M_{0,2}(BG, d)^\theta \to \f M_{0,2}(B\Gamma, d)_{\cc O_{\cC}}^\theta$.
The morphism is defined as follows.  For a family of curves $\cC \to S$ and a principal $G$-bundle $\cP_G \to \cC$, we define the principal $\Gamma$-bundle 
$$\cP_\Gamma := \cP_G \times_{\langle J \rangle} \CC^*,$$
where $J$ acts on $\CC^*$ by $e^{2 \pi i/d_w}$.  Note that there is a canonical isomorphism from
$$\xi_*(\cP_\Gamma) = \cP_\Gamma/G = \CC^*/\langle J \rangle \times \cC$$
to $\cc O_{\cC}^\circ = \CC^*\times \cC$.  This defines the morphism $i_1$.

On the other hand there is a map $i_2\colon \f M_{0,2}(B\Gamma, d)_{\cc O_{\cC}}^\theta \to \f M_{0,2}(BG, d)^\theta$ defined as follows.  For a family of curves $\cC \to S$ and a principal $\Gamma$-bundle $\cP_\Gamma \to \cC$ with an isomorphism $\eta: \cP_\Gamma/G \cong \cc O_{\cC}^\circ$, define $\cP_G$ as the kernel of the map
$$ \cP_\Gamma \to \cP_\Gamma/G \xrightarrow{\eta} \cc O_{\cC}^\circ.$$  It is straightforward to check that $\cP_G \to \cC$ is a principal $G$-bundle.  This defines the morphism $i_2$.

Given a principal $G$-bundle $\cP_G $ arising as the kernel of a map $\cP_\Gamma \to  \cc O_{\cC}^\circ$, there is a well defined map $\cP_G \times \CC^* \to \cP_\Gamma$ given by $(g, \lambda) \mapsto \lambda \cdot i(g)$, where $i: \cP_G \to \cP_\Gamma$ is the inclusion and the action of $\CC^*$ on $\cP_\Gamma$ is defined by identifying the torus $\CC^*$ with $\CC^*_R \subset \Gamma$.  This induces an isomorphism 
$$\cP_G \times_{\langle J \rangle} \CC^* \cong \cP_\Gamma.$$
It follows that the morphisms $i_1$ and $i_2$ are inverse to one another.
\end{proof}
\begin{proposition}\label{p:modint}
There is a canonical isomorphism 
$$Q_{0,2}(Y, d) \cong QLG_{0,2}(Y, d).$$
Under this isomorphism, the virtual  classes $[Q_{0,2}(Y, d)]^{vir}$ and $[QLG_{0,2}(Y, d)]^{vir}$ are identified.
\end{proposition}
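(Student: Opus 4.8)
The plan is to promote the identification of base stacks from Proposition~\ref{p:stackeq} to an identification of the full spaces of sections, and then to match the open conditions that cut out the two moduli problems inside those section spaces. The crux is a canonical isomorphism of the obstruction-theoretic data, namely of the bundles $\cV_G$ and $\cV_\Gamma$ on the common universal curve $\f C$.

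First I would fix the canonical isomorphism $\f M_{0,2}(BG,d)^\theta \cong \f M_{0,2}(B\Gamma, d)^\theta_{\omlog}$ of Proposition~\ref{p:stackeq}, which identifies the universal curves and, by \eqref{e:rel1}, realizes $\cP_\Gamma = \cP_G \times_{\langle J\rangle}\CC^*_R$. Here the genus-zero, two-point geometry enters decisively: by Lemma~\ref{l:cantriv} the log-canonical bundle $\omega_{\pi, \op{log}}$ is canonically trivial, so the structure-group reduction $\xi_*(\cP_\Gamma)\cong \cc O_{\f C}^\circ$ is canonical. I would then construct a bundle isomorphism $\cV_G \xrightarrow{\cong} \cV_\Gamma$ by $[p,v]\mapsto [[p,1],v]$. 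Well-definedness under the $G$-action follows from the identity $[[p,1],v] = [[pg,1], g^{-1}v]$ in $\cP_\Gamma\times_\Gamma V$; bijectivity on fibers follows from using the $\CC^*_R$-factor of $\cP_\Gamma$ to normalize the second coordinate to $1$ (every class equals $[[p,1],\lambda v]$ for the R-charge action of some $\lambda$), together with a short computation showing that the only $\gamma\in\Gamma$ fixing $[p,1]$ is the identity. This is exactly where the grading action of $\CC^*_R$ on $V$ is absorbed into the extra factor of $\cP_\Gamma$, and I expect this canonical identification $\cV_G \cong \cV_\Gamma$ to be the main point of the proof.

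Granting the bundle identification, the section space $\op{tot}(\pi_*\cV_G)\cong\op{tot}(\pi_*\cV_\Gamma)$ is identified compatibly with the universal sections $u$ and the induced maps $[u]$. I would then check that the open conditions defining $Q_{0,2}(Y,d)$ (Definition~\ref{d:qmod}) and $QLG_{0,2}(Y,d)$ agree: the basepoint locus is the preimage under $[u]$ of the unstable locus, and since $\theta\in\widehat G$ determines the same subset $V^{us}(\theta)\subset V$ whether one quotients by $G$ or by $\Gamma$, the conditions ``finite basepoint locus, disjoint from nodes and markings'' coincide under the identification. The stability conditions on the base already match by the construction of the isomorphism in Proposition~\ref{p:stackeq}. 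This yields $Q_{0,2}(Y,d)\cong QLG_{0,2}(Y,d)$.

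Finally, for the virtual classes I would observe that the two relative perfect obstruction theories $\RR\pi_*(\cV_G)^\vee$ and $\RR\pi_*(\cV_\Gamma)^\vee$ are identified by $\cV_G\cong\cV_\Gamma$. Choosing a common resolution $\cc A\to\cc B$ as in Proposition~\ref{p:res1} then produces the very same data $(U,E,\beta)$ appearing in Definition~\ref{d:virtq} and Definition~\ref{d:virt}, so the localized top Chern classes $e_{loc}(E,\beta)$ literally coincide; independence of the resolution, noted after Definition~\ref{d:virtq}, guarantees this is the correct comparison. I expect the only genuine obstacle to be the canonical bundle identification in the second step; the remaining verifications are formal consequences of Proposition~\ref{p:stackeq} and the definitions, with the matching of basepoint and stability conditions requiring only that $\theta$-(semi)stability depends on $\theta$ as a character of $G$.
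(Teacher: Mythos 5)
Your proposal is correct and follows essentially the same route as the paper's proof: identify the base stacks and universal curves via Proposition~\ref{p:stackeq}, use the inclusion $\cP_G \hookrightarrow \cP_\Gamma$ of \eqref{e:rel1} to identify $\cV_G$ with $\cV_\Gamma$, match the open conditions cutting out the two moduli problems inside the common space of sections, and conclude that the identified relative obstruction theories $\RR\pi_*(\cV_G)^\vee = \RR\pi_*(\cV_\Gamma)^\vee$ give equal virtual classes. The paper is terser (it asserts the bundle identification directly from the inclusion), but your added verification of the fiberwise bijectivity is just an expansion of the same step.
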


\begin{proof}
The space $ Q_{0,2}(Y, d)$ (resp. $QLG_{0,2}(Y, d) $) is an open subset of the space 
of sections of $\cP_G \times_G V$ (resp. $\cP_\Gamma \times_\Gamma V$)
over 
$\f M_{0,2}(BG, d)^\theta $ (resp. $ \f M_{0,2}(B\Gamma, d)_{\omlog}^\theta$).  
By Proposition~\ref{p:stackeq}, $\f M_{0,2}(BG, d)^\theta $ and $ \f M_{0,2}(B\Gamma, d)_{\omlog}^\theta$ are equal.  
On the universal curve over $\f M_{0,2}(BG, d)^\theta = \f M_{0,2}(B\Gamma, d)_{\omlog}^\theta$, the inclusion
$\cP_G \hookrightarrow \cP_\Gamma$ from \eqref{e:rel1}  
allows one to identify the vector bundles 
\begin{equation} \label{e:univbund} \cV_G = \cP_G \times_G V = \cP_\Gamma \times_\Gamma V = \cV_\Gamma.\end{equation}
Both  $ Q_{0,2}(Y, d)$ and $QLG_{0,2}(Y, d) $ are then defined as the subspace of $\tot( \pi_*( \cV_G)) = \tot( \pi_*( \cV_\Gamma))$ where the section $u$ maps to the $\theta$-unstable locus at finitely many points, each distinct from the marked points and nodes.  This proves the first statement.

By \eqref{e:univbund}, the  obstruction theories for $ Q_{0,2}(Y, d)$ and $QLG_{0,2}(Y, d) $ relative to $\f M_{0,2}(BG, d)^\theta = \f M_{0,2}(B\Gamma, d)_{\omlog}^\theta$ are equal:
$$ \RR \pi_*(\cV_G)^\vee =  \RR \pi_*(\cV_\Gamma)^\vee$$
thus the virtual classes coincide.
\end{proof}

\begin{corollary}\label{c:2pt}  If $h=0$ and $n=2$, the evaluation maps $ev_i\colon QLG_{0,2}(Y, d) \to IY$ are proper.
Given classes $\gamma_1, \gamma_2 \in \cc H_{ct}(Y, w)$ and $d \in \op{Eff}([V/G])$, 
$$\br{\gamma_1 \psi^{k_1}, \gamma_2\psi^{k_2}}_{0,2,d}^{(Y, w)} = \br{\phi^w(\gamma_1)\psi^{k_1}, \phi^w(\gamma_2)\psi^{k_2}}_{0,2,d}^{Y}.$$
\end{corollary}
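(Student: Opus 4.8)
The plan is to reduce both sides of the claimed identity to the \emph{same} integral on the \emph{same} moduli space, using the identification of moduli spaces and virtual classes established in Proposition~\ref{p:modint}. Once that identification is set up and the evaluation maps are shown to be proper, the equality of invariants will follow essentially by comparing the two defining formulas term by term, with no further geometric input.

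First I would establish properness of the evaluation maps. By Proposition~\ref{p:modint} there is a canonical isomorphism $QLG_{0,2}(Y,d) \cong Q_{0,2}(Y,d)$ under which the universal curves and the universal bundles $\cV_\Gamma = \cV_G$ are identified via \eqref{e:univbund}. The point that needs checking is that the two \emph{a priori} different constructions of $ev_i$ agree: on the quasimap side $ev_i$ is the restriction of the universal section to $\Sigma_i$, giving a map to $[V/G]$, while on the LG side $ev_i$ is built from the principal $G$-bundle $\cP_i'$ obtained as the kernel of $\cP_\Gamma|_{\Sigma_i} \to \cc O^\circ|_{\Sigma_i}$. I would observe that this kernel construction is precisely the morphism $i_2$ appearing in the proof of Proposition~\ref{p:stackeq}, which recovers $\cP_G$ from $\cP_\Gamma$; hence the section $u|_{\Sigma_i}$ induces the same map to $[V/G]$, and therefore the same map to $IY$, on both sides. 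Properness of $ev_i$ on $QLG_{0,2}(Y,d)$ then follows immediately from Lemma~\ref{l:propev} applied to the identified space $Q_{0,2}(Y,d)$.

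With properness in hand (and $n = 2$), Definition~\ref{d:propevinvts} applies and expresses the GLSM invariant as
$$\br{\gamma_1 \psi^{k_1}, \gamma_2\psi^{k_2}}_{0,2,d}^{(Y,w)} = \int_{[QLG_{0,2}(Y,d)]^{vir}} \bbr \cdot ev_1^*(\phi^w(\gamma_1)) \cup_{cs} ev_2^*(\phi^w(\gamma_2)) \cup \psi_1^{k_1} \cup \psi_2^{k_2}.$$
Since $\phi^w$ carries $\cc H_{ct}(Y,w)$ into $H^*_{CR,ct}(Y)$ (as noted just after Lemma~\ref{l:jfactors}), the classes $\phi^w(\gamma_1), \phi^w(\gamma_2)$ are legitimate compact type insertions, so Definition~\ref{d:qinvts} expresses the quasimap invariant $\br{\phi^w(\gamma_1)\psi^{k_1}, \phi^w(\gamma_2)\psi^{k_2}}_{0,2,d}^{Y}$ by the identical integrand over $[Q_{0,2}(Y,d)]^{vir}$. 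Invoking Proposition~\ref{p:modint} once more to identify the virtual classes, and noting that the $\psi$-classes and the factor $\bbr$ are determined by the identified universal curve and marked-point automorphisms and hence agree, the two integrals coincide and the equality follows. The main obstacle is the evaluation-map matching in the first step: one must verify that the residue/kernel construction defining the LG evaluation map is genuinely the inverse morphism $i_2$, and not merely an abstractly isomorphic bundle, so that the compactly supported pullbacks $ev_i^*$ agree on the nose; after that point the argument is a formal identification of two identical integrals.
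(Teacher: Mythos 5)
Your proposal is correct and follows essentially the same route as the paper: identify $QLG_{0,2}(Y,d)$ with $Q_{0,2}(Y,d)$ and their virtual classes via Proposition~\ref{p:modint}, match the evaluation maps through the relation between $\cP_G$ and $\cP_\Gamma$ (your observation that the residue/kernel construction is exactly the inverse morphism $i_2$ of Proposition~\ref{p:stackeq} is precisely the content of the paper's appeal to the exact sequence \eqref{e:rel2}), deduce properness from Lemma~\ref{l:propev}, and then read off the equality of invariants from Definitions~\ref{d:propevinvts} and~\ref{d:qinvts}. No gaps; your treatment of the evaluation-map matching is if anything slightly more explicit than the paper's.
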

\begin{proof}
Proposition~\ref{p:modint} identifies the moduli spaces
$Q_{0,2}(Y, d)$  and $QLG_{0,2}(Y, d).$   It is left to check that the respective evaluation maps 
are equal under this identification.  This follows from the definition of the evaluation maps on $QLG_{0,2}(Y, d)$ (Section~\ref{ss:virtGLSM}), together with the fact that the universal principal $G$- and $\Gamma$-bundles 
are related via the exact sequence \eqref{e:rel2}.  The first claim then holds by Lemma~\ref{l:propev}.

The equality of invariants is then immediate from Definition~\ref{d:propevinvts} and Proposition~\ref{p:modint}. 
\end{proof}

\section{Adding light points}\label{s:lp}
In this section we recall the definition of quasimap invariants with light points, and define GLSM invariants with light points in a special case.
\subsection{Quasimap invariants}\label{quasimap invariants}
In \cite{CKbig}, Ciocan-Fontanine--Kim define quasimap invariants with light points, and use these to define a new class of generating functions of quasimap invariants.  These functions are shown to be  related to more standard generating functions of Gromov--Witten invariants via wall crossing, but they have the benefit of being readily computable.  

In the language of quasimaps, light points are obtained by replacing the moduli space 
$ Q_{h,n}(Y, d)$ with $ Q_{h,n}(Y \times (\PP^0)^k, (d,1^k))$, where $(d, 1^k)$ is shorthand for $(d, 1, \ldots, 1) \in \hom_\ZZ( \widehat{(G \times (\CC^*)^k)} , \QQ)$.    The base point of a degree one quasimap to $\PP^0:= [\CC\sslash_{\op{id}} \CC^*]$ marks a single point on the source curve.  We call these points light because they may collide with one another in the moduli space of stable quasimaps.  

\begin{definition}
Define the moduli space of genus $h$, $n$-pointed, degree $d$ quasimaps with $k$ light points to be
$$Q_{h,n|k}(Y, d):= Q_{h,n}(Y \times (\PP^0)^k, (d,1^k)).$$
Define $$[Q_{h,n|k}(Y, d)]^{vir}:= [Q_{h,n}(Y \times (\PP^0)^k, (d,1^k))]^{vir}.$$

\end{definition}

Label the light marked points as $q_1, \ldots, q_k$.  Note that light marked points may collide with each other, but not with heavy points or nodes.  On the quasimap moduli space $Q_{h,n|k}(Y, d)$, there exist evaluation maps $\hat {ev}_j$ at the light marked points which map to the stack quotient \cite[Section~2.3]{CKbig}.  They may be defined as the composition 
$$\hat {ev}_j^Q \colon Q_{h,n|k}(Y, d) \xrightarrow{\hat s_j} \cC  \xrightarrow{[u_G]} [V/G],$$
where $\hat s_j\colon Q_{h,n|k}(Y, d) \to \cC $ is the section of the universal curve given by the $j$th light point.
We note that the light markings defined here have no orbifold structure.  

Quasimap invariants with light point insertions are  defined in \cite{CKbig}.  The following is a slight modification of the definition in \cite{CKbig} to the case when $Y$ is not assumed to be proper.
\begin{definition}
Given $\gamma_1 \in H^*_{CR, cs}(Y)$, $\gamma_2, \ldots, \gamma_n \in H^*_{CR}(Y)$, and   $\alpha_1, \ldots, \alpha_k \in H^*([V/G])$, define  $\br{ \gamma_1 \psi^{k_1}, \ldots, \gamma_n \psi^{k_n}| \alpha_1, \ldots, \alpha_k}_{h,n|k,d}^{Y}$ to be 
$$ \int_{[Q_{h,n}(Y, d)]^{vir}} 
\bbr \cdot
{ev_1^c}^*(\gamma_1) \cup \psi_1^{k_1} \cup \prod_{2 \leq i \leq n}  \left( ev_i^*(\gamma_i) \cup \psi_i^{k_i}\right) \cup \prod_{j=1}^k \hat{ev}^{Q *}(\alpha_j), $$
where ${ev_1^c}^*$ is the pullback in compactly supported cohomology.

Given $\gamma_1, \gamma_2 \in H^*_{CR, ct}(Y)$, $\gamma_3, \ldots, \gamma_n \in H^*_{CR}(Y)$, and   $\alpha_1, \ldots, \alpha_k \in H^*([V/G])$, define
$$\br{ \gamma_1 \psi^{k_1}, \ldots, \gamma_n \psi^{k_n}| \alpha_1, \ldots, \alpha_k}_{h,n|k,d}^{Y}$$ to be
$$ \int_{[Q_{h,n|k}(Y, d)]^{vir}}  
\bbr \cdot
{ev_1}^*(\gamma_1) \cup_{cs} {ev_2}^*(\gamma_2) \cup \psi_1^{k_1} \cup \psi_2^{k_2} \cup \prod_{3 \leq i \leq n}   \left(ev_i^*(\gamma_i) \cup \psi_i^{k_i} \right)\cup \prod_{j=1}^k \hat{ev}^{Q *}_j(\alpha_j). $$
\end{definition}

\subsection{GLSM invariants} \label{ss:GLSMlight}
In this section we use the same type of modification of the target $[V/G]$ to define LG quasimaps with light points and their corresponding GLSM invariants in the special case of $h=0, n=2$.

\begin{definition}\label{d:qlight} Define the space of genus $h$, $n$-pointed, degree $d$ LG quasimaps with $k$ light points to be
$$QLG_{h,n|k}(Y, d):= QLG_{h,n}(Y \times (\PP^0)^k, (d,1^k)).$$
\end{definition}
On the moduli space of LG quasimaps, there exist light evaluation maps to $[V_0 /(G/\langle J\rangle)]$, where $V_0$ is the $\CC^*_R$-fixed locus of $V$.   The evaluation map at the $j$th light point is defined as 
\begin{equation}\label{e:lightev}\hat {ev}^{QLG}_j\colon   QLG_{h,n|k}(Y, d)\xrightarrow{\hat s_j} \cC \xrightarrow{[u_\Gamma]} [V/\Gamma] \xrightarrow{\op{proj}} [V_0/\Gamma] \to[V_0 /(G/\langle J\rangle)],\end{equation}
where the last map is induced by the homomorphism $\Gamma \to G/\langle J \rangle$ (note that $\langle J \rangle$ acts trivially on $V_0$).

\begin{remark}[Motivating the space of LG quasimaps with light points]\label{altcomp} 
Let $Y^0_J$ denote the $\CC^*_R$-invariant locus of the $J$th twisted sector $Y_J$.  Assume that  
$Y^0_J = [V_0 \sslash_\theta G]$ 
and 
consider the substack of $QLG_{h,n+k}(Y, d)$ of LG quasimaps where the last $k$ marked points each map via the evaluation map to $Y^0_J$:
$$QLG_{h,n+k}(Y, d)(Y^0_J)_{j=1}^k := \bigcap_{j=1}^k ev_j^{-1}( Y^0_J).$$  The space $QLG_{h,n+k}(Y, d)(Y^0_J)_{j=1}^k$ contains an open subset $QLG_{h,n+k}(Y, d)^\circ(Y^0_J)_{j=1}^k$ defined by the condition that $\omega_{\cC}(p_1 + \cdots + p_n) \otimes L_\theta^{\otimes \epsilon}$  is ample for all $\epsilon >0 $.

On the other hand, let $QLG_{h,n|k}(Y, d)^\circ$ denote the open substack of $QLG_{h,n|k}(Y, d)$ consisting of quasimaps for which the light points are disjoint from each other and from basepoints, and $\omega_{\cC}(p_1 + \cdots + p_n) \otimes L_\theta^{\epsilon}$ is ample for all $\epsilon >0 $.

Following the arguments of
\cite[Section~5.4]{FKim}, by applying a \emph{Hecke modification} and forgetting the orbifold structure at the last $k$ marked points, one can define a map
$$QLG_{h,n+k}(Y, d)^\circ(Y^0_J)_{j=1}^k \to QLG_{h,n|k}(Y, d)^\circ$$
which, due to the gerbe structure at the last $k$ marked points, is a $\mu_{d_w}^k$-gerbe (in particular the map on coarse spaces is an isomorphism).

Furthermore, one can check using the proof of \cite[Lemma~5.5 (1)]{FKim} that the evaluation maps are compatible, i.e., for $1 \leq j\leq k$ the following diagram commutes,
\[
\begin{tikzcd}
QLG_{h,n+k}(Y, d)^\circ(Y^0_J)_{j=1}^k \ar[r]\ar[d, "ev_{n+j}"] & QLG_{h,n|k}(Y, d)^\circ \ar[d, "\hat{ev}^{QLG}_j"] \\
Y^0_J =\left[(V_0)\sslash_\theta G\right] \ar[r] &\left[(V_0) \sslash_\theta (G/\langle J \rangle) \right] \subset \left[V_0/ (G/\langle J \rangle) \right].
\end{tikzcd}
\]

Thus, under mild assumptions, the space $QLG_{h,n|k}(Y, d)$ may be viewed, up to a $\mu_{d_w}^k$-gerbe structure, as an alternative compactification of $QLG_{h,n+k}(Y, d)^\circ(Y^0_j)$ where the last $k$ points are allowed to coincide.  
This is the motivation behind   Definition~\ref{d:qlight}.  We expect these invariants to be related to GLSM invariants with all heavy marked points via a wall-crossing formula as in \cite{Pin, Zhou2}.
\end{remark}

Let $j\colon [V_0/(G/\langle J\rangle)] \to [ V^J/(G/\langle J\rangle)]$ denote the obvious inclusion of stacks.
The $(G/\langle J\rangle)$-equivariant analogue of \eqref{e:BMduality} and Definition~\ref{d:BMpush} defines pushforwards
$$j_*\colon H^*([V_0/(G/\langle J\rangle)]) \to H^*([ V^J/(G/\langle J\rangle)])$$
and 
$$j'_*\colon H^*( [V_0/(G/\langle J\rangle)]) \to H^*([ V^J/(G/\langle J\rangle)], w^{+\infty}).$$
Viewing $j$ as the zero section of a vector bundle, we note that $j_*\circ j^*(\gamma)= e(V_{d_w}) \cup \gamma$ where here $V_{d_w}$ denotes the vector bundle $[(V^J \times V_{d_w})/(G/\langle J\rangle)] \to [V^J/(G/\langle J\rangle)]$.
As before, if $$\phi^w\colon H^*([ V^J/(G/\langle J\rangle)], w^{+\infty}) \to H^*([ V^J/(G/\langle J\rangle)])$$ is the $(G/\langle J\rangle)$-equivariant pullback of the map of pairs $(V_0, \emptyset) \hookrightarrow (V^J, w^{+\infty})$, then 
\begin{equation} \label{e:jjp}
j_* = \phi^w \circ j'_*.\end{equation}


\begin{definition} \label{d:lightinvts}
 Given classes $\gamma_1, \gamma_2 \in \cc H_{ct}(Y, w)$ and 
 $$\tilde \sigma_1, \ldots, \tilde \sigma_k \in H^*([V/(G/\langle J\rangle)], w^{+\infty})$$ arising as the pushforward of $\sigma_1, \ldots, \sigma_k \in H^*([V_0/(G/\langle J\rangle)])$ via $j'_*$,
 define $\br{\gamma_1 \psi^{k_1}, \gamma_2\psi^{k_2}| \tilde \sigma_1, \ldots, \tilde \sigma_k}_{0,2|k,d}^{(Y, w)}$ to be 
\[ \int_{ [QLG_{0,2|k}(Y, d)]^{vir}} \bbr \cdot ev_1^*(\phi^{cs}(\gamma_1)) \cup_{cs} ev_2^*(\phi^{cs}(\gamma_2)) \cup \psi^{k_1} \cup \psi^{k_2} \cup \prod_{j=1}^k \hat{ev}^{QLG *}_j(\sigma_j). \]
\end{definition}
In the toric setting under a natural assumption, we verify that the above invariants are well-defined.  This is the main setting in which we will do computations (see Corollary~\ref{c:Ifctn}).
\begin{proposition}\label{t:lightwd}
Suppose that $G = (\CC^*)^k$ and $(\Sym^\bullet(V^\vee))^\Gamma = \CC$.  Then
the above definition does not depend on the particular $\sigma_i$ chosen such that $j'_*(\sigma_i) = \tilde \sigma_i$.  
\end{proposition}

\begin{proof}
We claim that  $j'_*$ is injective. By \eqref{e:jjp}, $j'_*$ is injective if $j_*$ is injective.  Note that $$j^*\colon H^*([V^J/(G/\langle J\rangle)]) \to H^*([V_0/(G/\langle J\rangle)])$$ is an isomorphism, thus the claim holds whenever $j_* \circ j^* = e(V_{d_w}) \cup ( - )$ is injective. 

Because $G$ is a torus, $V_{d_w}$ splits as a sum of one-dimensional representations.  We claim that  none of these are trivial.  Suppose on the contrary that $V_{d_w}$ contains a trivial rank-one $G$-representation $L'$ as a summand.  Let $p$ be the degree $d_w$ homogeneous coordinate on $L'$.  Because $w$ is degree $d_w$,  we may write it as
$$w = p \cdot w_p + w'$$
where $w_p$ is a an element of $\Sym^\bullet(V_0^\vee)$ and $w'$ does not depend on $p$.  Because $p$ is $G$-invariant and each monomial of $W$ is $G$-invariant, we conclude that $w_p$ is $G$-invariant.   The assumption $(\Sym^\bullet(V^\vee))^\Gamma = \CC$ implies that $\Sym^\bullet(V_0^\vee)^G = \CC$. 
Consequently $w_p$ is constant.  If $w_p \neq 0$ then the critical locus $Z(dw)$ is empty as it lies in $\partial w/\partial p = w_p =  0$.  Thus $w_p$ must be zero, so $w = w'$.  Because $L'$ is a trivial representation, the critical locus $Z(dw)$ is therefore isomorphic to $Z(dw'|_{p=0}) \times \CC$
where $Z(dw'|_{p=0})$ is the critical locus in $\{p = 0\} \subset V$.  This contradicts the assumption that $Z(dw)$ is proper over $\spec \CC$.

Because $V_{d_w}$ has no trivial summands, its Euler class is a nonzero polynomial in $H^*([ V^J/(G/\langle J\rangle)]) \cong \CC[t_1, \ldots, t_k]$.  This proves that $j_* \circ j^* = e(V_{d_w}) \cup ( - )$ is injective. 
\end{proof}
\begin{remark}
The argument above holds for other groups $G$ as well, under the conditions on $G$ that $H^*(G/\langle J \rangle)$ embeds in an integral domain and the only representations with trivial Euler class contain a trivial summand.
\end{remark}

Let $i\colon [V_0/G] \to [V/G]$  denote the inclusion and $r\colon [V_0/G] \to [V_0/(G/\langle J\rangle)]$ denote the $\langle J \rangle$-gerbe.  The pullback $r^*$ is an isomorphism by the Lyndon-Hochschild-Serre spectral sequence for the extension 
$$1 \to \langle J\rangle \to G \to G/\langle J\rangle \to 1$$
and the fact that $H^*(B\langle J\rangle, \CC) = \CC$.  Define the map 
\begin{align*}
\tau\colon H^*([V/G]) \to& H^*([ V^J/(G/\langle J\rangle)], w^{+\infty}) \\
 \alpha  \mapsto &j'_*((r^*)^{-1}i^*\alpha).
\end{align*}
We have the following comparison between quasimap invariants and GLSM invariants with light points:
\begin{theorem}\label{t:complight}
 Given classes $\gamma_1, \gamma_2 \in \cc H_{ct}(Y, w)$ and $\alpha_1, \ldots, \alpha_k \in H^*([V/G])$, 
\[\br{\gamma_1 \psi^{k_1}, \gamma_2\psi^{k_2}| \tau(\alpha_1), \ldots, \tau(\alpha_k)}_{0,2|k,d}^{(Y, w)} =  \br{\phi^w(\gamma_1)\psi_1^{k_1},\phi^w(\gamma_2)\psi_2^{k_2}| \alpha_1, \ldots, \alpha_k}_{0,2|k,d}^{Y}.\]
\end{theorem}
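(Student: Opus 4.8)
The plan is to recognize the $k$ light markings as a modification of the target, reducing the claim to the two-heavy-point comparison of Proposition~\ref{p:modint} and Corollary~\ref{c:2pt}. By Definition~\ref{d:qlight}, the left-hand side is a GLSM invariant of $Y\times(\PP^0)^k$ for the enlarged GLSM $(V\oplus\CC^k,\, G\times(\CC^*)^k,\, (\theta,\op{id}^k),\, w)$, in which $\CC^*_R$ acts trivially on the new coordinates $\CC^k$ and the potential is extended by $w(v,z)=w(v)$; the right-hand side is the quasimap invariant of $Y\times(\PP^0)^k$ in degree $(d,1^k)$. First I would verify that this enlarged datum is again a GLSM in the sense of Definition~\ref{d:glsm0}. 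These checks are routine: the new coordinates lie in degree $0$, so $w$ remains homogeneous of degree $d_w$; one computes $(G\times(\CC^*)^k)\cap\CC^*_R=\langle(J,\op{id})\rangle$ and $\Gamma'=\Gamma\times(\CC^*)^k$; the critical locus is unchanged and hence proper; and strict semistability is absent because it is absent for each factor.

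With the enlarged GLSM in place, Proposition~\ref{p:modint} applies verbatim and yields a canonical isomorphism
$$Q_{0,2|k}(Y,d)\cong QLG_{0,2|k}(Y,d)$$
identifying the virtual classes. Moreover, since $(\PP^0)^k$ has trivial inertia, the two heavy evaluation maps are maps $ev_1,ev_2\colon QLG_{0,2|k}(Y,d)\to I(Y\times(\PP^0)^k)=IY$, and the proof of Corollary~\ref{c:2pt} shows they agree with the quasimap evaluation maps under the isomorphism. Consequently the heavy insertions $ev_1^*(\phi^w(\gamma_1))\cup_{cs}ev_2^*(\phi^w(\gamma_2))$, together with the $\psi$- and $\bbr$-factors, match identically on the two sides. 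It remains only to compare the contributions of the light markings.

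The heart of the argument is the identity
$$\hat{ev}^{QLG}_j \;=\; r\circ p\circ \hat{ev}^{Q}_j,$$
where $\hat{ev}^Q_j$ is the quasimap light evaluation to $[V/G]$ of Section~\ref{quasimap invariants}, $p\colon[V/G]\to[V_0/G]$ is the projection induced by $V\to V_0$, and $r\colon[V_0/G]\to[V_0/(G/\langle J\rangle)]$ is the $\langle J\rangle$-gerbe. To establish this I would unwind the definition \eqref{e:lightev} of $\hat{ev}^{QLG}_j$ and apply the bundle identifications of Proposition~\ref{p:stackeq} to the original GLSM, which is the $V$-factor of the enlarged one: the universal sections of $\cV_G$ and $\cV_\Gamma$ are identified by \eqref{e:univbund}, so $u(q_j)$ and its projection to $V_0$ coincide on both sides; and because $\CC^*_R$ acts trivially on $V_0$, the $\Gamma$-action there factors through $\Gamma/\CC^*_R\cong G/\langle J\rangle$, under which \eqref{e:rel1} gives $\cP_\Gamma/\CC^*_R\cong\cP_G/\langle J\rangle$. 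Both composites therefore send the light point $q_j$ to the same class $[\op{proj}_{V_0}(u(q_j))]$ in $[V_0/(G/\langle J\rangle)]$. I expect this bookkeeping---tracking the projection onto $V_0$ together with the descent along $\CC^*_R\to G/\langle J\rangle$---to be the main obstacle, since it is precisely where the definition of $\tau$ and the GLSM structure genuinely enter.

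Granting this compatibility, the proof concludes formally. Using the representative $\sigma_j=(r^*)^{-1}i^*\alpha_j$ of $\tau(\alpha_j)$ from the definition of $\tau$ (any choice gives the same answer by Theorem~\ref{t:lightwd}), the displayed identity gives
$$\hat{ev}^{QLG,*}_j\big((r^*)^{-1}i^*\alpha_j\big)=\hat{ev}^{Q,*}_j\, p^*r^*(r^*)^{-1}i^*\alpha_j=\hat{ev}^{Q,*}_j\,p^*i^*\alpha_j=\hat{ev}^{Q,*}_j(\alpha_j),$$
where the last step uses that $i\colon[V_0/G]\hookrightarrow[V/G]$ is the zero section of a vector bundle, so $i^*$ is an isomorphism with inverse $p^*$ and hence $p^*i^*=\op{id}$. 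Substituting this for each $j$ into Definition~\ref{d:lightinvts} and comparing with the quasimap integrand over the identified virtual class makes the two integrals coincide term by term, which is the asserted equality.
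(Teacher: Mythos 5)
Your proposal is correct and follows essentially the same route as the paper: identify both sides as invariants of the modified target $Y\times(\PP^0)^k$, apply Proposition~\ref{p:modint} to match the moduli spaces and virtual classes, and then check that the light evaluation maps satisfy $(\hat{ev}^{QLG}_j)^*(r^*)^{-1}i^*\alpha = (\hat{ev}^{Q}_j)^*\alpha$ via the same factorization through $[V_0/(G/\langle J\rangle)]$. The extra verifications you include (that the enlarged datum is a GLSM, and the appeal to Theorem~\ref{t:lightwd} for independence of the representative) are consistent with what the paper leaves implicit.
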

\begin{proof}
First recall that by definition, 
\begin{align*}
QLG_{0,2|k}(Y, d) &= QLG_{0,2}(Y \times (\PP^0)^k, (d,1^k))\\
Q_{0,2|k}(Y, d) & = Q_{0,2}(Y \times (\PP^0)^k, (d,1^k)).
\end{align*}
By Proposition~\ref{p:modint} applied to the spaces on the right, $QLG_{0,2|k}(Y, d)$ and $Q_{0,2|k}(Y, d)$ are canonically isomorphic and their virtual classes are identified.  

Next we compare the light evaluation maps.  For $1 \leq j\leq k$, we have the following commuting diagram:
\[
\begin{tikzcd}
 Q_{0,2|k}(Y, d)\ar[d, "\hat{ev}^Q_j = {\left[u_G\right]} \circ \hat s_j"] \ar[r, "\cong"] &QLG_{0,2|k}(Y, d) \ar[d, swap, "{\left[u_\Gamma \right]} \circ \hat s_j"]  \ar[ddd, bend left, "\hat{ev}^{QLG}_j"]\\
 \left[V/G \right] \ar[d, swap, "\pi"] \ar[r] & \left[V/\Gamma \right] \ar[d, "\pi"] \\
\ar[u, bend right, swap, "i"]  \left[V_0/G \right] \ar[d, "r"] \ar[r] & \left[V_0/\Gamma \right] \ar[d, "\pi"] 
  \\
\left[V_0/(G/\langle J\rangle)\right]  \ar[r, "="]& \left[V_0/(\Gamma/\CC^*_R)\right]
\end{tikzcd}
\]
We see that  $$ \hat{ev}_j^{Q *} \alpha = \hat{ev}_j^{QLG *}(r^*)^{-1}i^*\alpha.$$
The result then follows from 
Definition~\ref{d:lightinvts}.

\end{proof}


\section{Generating functions}\label{s:GF}
In this section we show how certain derivatives of $I$-functions for $Y$ can be used to obtain generating functions of GLSM invariants for $(Y, w)$.  In the toric setting, we provide an explicit formula for GLSM $I$-functions.
\subsection{The $S$-operator}
\begin{definition}
Let $\CC[[q]]$ denote the \emph{Novikov ring}:
$$\CC[[q]] := \CC[[ \eff(V, G, \theta)]],$$ where we denote the element $d \in \CC[[ \eff(V, G, \theta)]]$ by $q^d$.

Let $\bt = \sum_{1 \leq j \leq m} t^j T_j$ where $T_j\in H^*([V/G])$ and $t^j$ are formal parameters.  For $\gamma_1 \in H^*_{CR, cs}(Y)$, $ \gamma_2 \in H^*_{CR}(Y)$ and $k_1, k_2 \geq 0$, define the generating function
\[\brr{ \gamma_1 \psi^{k_1}, \gamma_2\psi^{k_2}}_{0,2}^{Y}(\bt) := \sum_{d \in\eff(V, G, \theta)} \sum_{k \geq 0} \frac{q^d}{k!} \br{ \gamma_1\psi^{k_1}, \gamma_2\psi^{k_2} | \bt, \ldots, \bt}_{0, 2|k, d}^Y,\]
where 
\[\br{ \gamma_1\psi^{k_1}, \gamma_2\psi^{k_2} | \bt, \ldots, \bt}_{0, 2|k, d}^Y := \int_{\left[Q_{0, 2|k}(Y, d)\right]^{vir}} \bbr \cdot
{ev_1^c}^*(\gamma_1) \cup ev_2^*(\gamma_2)
\cup \psi_1^{k_1} \cup \psi_2^{k_2}\prod_{j=1}^{k} \hat{ev}_j^{Q *}(\bt).\]
Alternatively, for $\gamma_1, \gamma_2 \in H^*_{CR,ct}(Y)$, $\brr{ \gamma_1\psi^{k_1}, \gamma_2\psi^{k_2}}_{0,2}^{Y}(\bt)$ is defined similarly, but replacing ${ev_1^c}^*(\gamma_1) \cup ev_2^*(\gamma_2)$ with 
$ev_1^*(\gamma_1) \cup_{cs} ev_2^*(\gamma_2)$.
\end{definition}

\begin{definition}\label{d:SY}
Let $\{ \tilde \gamma_i\}_{i \in \tilde I}$ be a basis for $H^*_{CR}(Y)$ and let 
$\{ \tilde \gamma^i\}_{i \in \tilde I}$ be the dual basis  in $H^*_{CR,cs}(Y)$.
The (light point, quasimap) $S$-operator  on $H^*_{CR}(Y)[[q]][[t^i]][z, z^{-1}]$
is defined by
\[S^{Y}(q, \bt, z)(\gamma) := \sum_{i \in \tilde I} \tilde \gamma_i \brr{  \frac{\tilde \gamma^i}{z- \psi}, \gamma}_{0, 2}^{Y}(\bt).\] 
As shown in \cite{CKbig}, the inverse of $S$ is given by:
$L^{Y}(q, \bt, z) = {S^Y}^*(q, \bt, -z)$,
\[L^{Y}(q, \bt, z)(\gamma) := \sum_{i \in \tilde I} \tilde \gamma_i \brr{\tilde \gamma^i, \frac{\gamma}{-z- \psi}}_{0, 2}^{Y}(\bt).\]
\end{definition}
Recall from Section~\ref{s:ctcpp} that there is a perfect pairing on $H^*_{CR,ct}(Y)$.  Let $\{ \gamma_i\}_{i \in I}$ be a basis for $H^*_{CR,ct}(Y)$ and let 
$\{ \gamma^i\}_{i \in I}$ be the dual basis. 
\begin{lemma}\label{l:ct}
The operator $S^{Y}(q, \bt, z)$ preserves the compact type subspace. I.e.,
If $\gamma \in H^*_{CR,ct}(Y)[[q]][[t^i]][z, z^{-1}]$, then $S^{Y}(q, \bt, z)(\gamma)\in H^*_{CR,ct}(Y)[[q]][[t^i]][z, z^{-1}]$.  The same holds for $L^{Y}(q, \bt, z)$.
For   $\gamma$ as above, we have  
$$S^{Y}(q, \bt, z)(\gamma) = \sum_{i \in  I}  \gamma_i \brr{ \frac{\gamma^i}{z- \psi}, \gamma}_{0, 2}^{Y}(\bt).$$
\end{lemma}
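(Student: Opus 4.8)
The plan is to reduce the whole statement to a single vanishing on the moduli spaces $Q_{0,2|k}(Y,d)$ and then read off the formula by choosing a basis of $H^*_{CR}(Y)$ adapted to the compact type subspace. Throughout I would fix $d$ and $k$ and argue on one space $Q := Q_{0,2|k}(Y,d)$ at a time, summing over $d$ and $k$ (and absorbing the scalar weights $q^d/k!$ and $\bbr$) only at the end; write $\phi^{cs}_Q$ for the analogue of $\phi^{cs}$ on $Q$. Since $g=0$ with two heavy points, the heavy evaluation maps $ev_1,ev_2$ are proper by Lemma~\ref{l:propev}, so the compactly supported pullbacks ${ev_i^c}^*$ are defined and satisfy $\phi^{cs}_Q\circ {ev_i^c}^* = ev_i^*\circ\phi^{cs}$, as recalled in the proof of Lemma~\ref{l:ccs}. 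The conceptual core is the identity from Section~\ref{s:ctcpp} that $\kappa\cup c = 0$ whenever $\kappa\in\ker(\phi^{cs}_Q)$ and $c$ is of compact type, transported from $Y$ to $Q$.

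First I would prove the key vanishing: if $\tilde\gamma\in\ker(\phi^{cs})\subset H^*_{CR,cs}(Y)$ and $\gamma\in H^*_{CR,ct}(Y)$, then each invariant $\br{\tilde\gamma\psi^{k_1},\gamma\psi^{k_2}|\bt,\dots,\bt}_{0,2|k,d}^Y$ vanishes. Grouping the integrand as ${ev_1^c}^*(\tilde\gamma)$ cupped with the remaining factors $\psi_1^{k_1}\cup ev_2^*(\gamma)\cup\psi_2^{k_2}\cup\prod_j\hat{ev}_j^*(\bt)$, the first factor lies in $\ker(\phi^{cs}_Q)$ because $\phi^{cs}_Q({ev_1^c}^*\tilde\gamma)=ev_1^*(\phi^{cs}\tilde\gamma)=0$, while the remaining factors are of compact type on $Q$, since $ev_2^*(\gamma)=\phi^{cs}_Q({ev_2^c}^*(-))$ is of compact type and compact type classes are preserved under cup product by arbitrary classes (being the image of the $H^*$-module map $\phi^{cs}$). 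Hence the product, and so the invariant, is zero. The same computation, now invoking \eqref{e:prodcomp} on $Q$, shows more generally that the integrand of $\brr{\tilde\gamma,\gamma/(z-\psi)}_{0,2}^Y(\bt)$ equals $ev_1^*(\phi^{cs}\tilde\gamma)\cup_{cs}ev_2^*(\gamma)\cup(\text{rest})$, i.e. the invariant depends on $\tilde\gamma$ only through $\phi^{cs}(\tilde\gamma)$ and agrees with the compact type ($\cup_{cs}$) bracket $\brr{\phi^{cs}(\tilde\gamma),\gamma/(z-\psi)}_{0,2}^Y(\bt)$.

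Next I would set up the adapted basis. As $S^Y$ is independent of the choice of basis $\{\tilde\gamma_i\}_{i\in\tilde I}$ and its dual $\{\tilde\gamma^i\}_{i\in\tilde I}$, I may take $\{\tilde\gamma_i\}$ to extend a chosen basis $\{\gamma_i\}_{i\in I}$ of $H^*_{CR,ct}(Y)$, with $\tilde\gamma_i=\gamma_i$ for $i\in I$. For $i\in\tilde I\setminus I$ the dual basis relation gives $\br{\tilde\gamma^i,\gamma_j}^Y=0$ for all $j\in I$; by \eqref{e:paircomp} this reads $\br{\phi^{cs}(\tilde\gamma^i),\gamma_j}^{Y,cs}=0$ for all $j$, so nondegeneracy of the compact type pairing forces $\tilde\gamma^i\in\ker(\phi^{cs})$. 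For $i\in I$, \eqref{e:paircomp} gives $\br{\phi^{cs}(\tilde\gamma^i),\gamma_j}^{Y,cs}=\delta^i_j$, whence $\phi^{cs}(\tilde\gamma^i)=\gamma^i$, the dual of $\{\gamma_j\}$ under the compact type pairing. Combining with the previous paragraph, the terms $i\in\tilde I\setminus I$ drop out by the vanishing, and those with $i\in I$ become $\gamma_i\,\brr{\gamma^i,\gamma/(z-\psi)}_{0,2}^Y(\bt)$, which yields both the displayed formula and membership of $S^Y(\gamma)$ in $H^*_{CR,ct}(Y)[[q]][[t^i]][z,z^{-1}]$. Since $L^Y(q,\bt,z)=S^Y(q,\bt,-z)^\star$ has a defining bracket whose first slot $\tfrac{\tilde\gamma^i}{-z-\psi}$ is again built from compactly supported classes paired against the compact type class $\gamma$, the identical argument applies verbatim to $L^Y$.

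The main obstacle, as I see it, is bookkeeping rather than conceptual: one must check carefully that the entire compact type formalism of Section~\ref{s:ctcpp} — the vanishing $\ker(\phi^{cs})\cdot H^*_{ct}=0$, the preservation of compact type under cup product, and the compatibilities \eqref{e:prodcomp} and \eqref{e:paircomp} — is legitimately applied on $Q$ rather than on $Y$, which rests on properness of $ev_1,ev_2$ so that the pullbacks ${ev_i^c}^*$ exist and commute with $\phi^{cs}$. Once that transport is justified, the vanishing of the $\ker(\phi^{cs})$-terms together with the identification $\phi^{cs}(\tilde\gamma^i)=\gamma^i$ makes the formula fall out immediately.
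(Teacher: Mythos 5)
Your argument is correct, but it reaches the first statement by a genuinely different route than the paper. The paper's proof is shorter and more structural: it rewrites $S^{Y}(q,\bt,z)(\gamma)$ as $\gamma$ plus a sum of pushforwards $\wt{ev_1}_*\bigl(\tfrac{ev_2^*(\gamma)}{z-\psi}\cup(\cdots)\cap[Q_{0,2|k}(Y,d)]^{vir}\bigr)$ and then invokes properness of $ev_1,ev_2$ (Lemma~\ref{l:propev}) together with the fact from \cite{Shoe1} that proper pushforward and proper pullback preserve compact type cohomology; no choice of basis and no term-by-term vanishing is needed, and the compact type formalism is only ever applied on $Y$ itself. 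Your route instead fixes a basis of $H^*_{CR}(Y)$ adapted to $H^*_{CR,ct}(Y)$, uses \eqref{e:paircomp} and nondegeneracy of the compact type pairing to show the complementary dual-basis vectors lie in $\ker(\phi^{cs})$, and kills their contributions via the transported identity that $\ker(\phi^{cs})$ annihilates $\op{im}(\phi^{cs})$ on the moduli space. This is legitimate --- the paper itself uses $\cup_{cs}$ on $Q_{0,2|k}(Y,d)$ in Definition~\ref{d:qinvts}, and the compatibilities $\phi^{cs}\circ{ev_i^c}^*=ev_i^*\circ\phi^{cs}$ you need are exactly those recalled in the proof of Lemma~\ref{l:ccs} --- but it makes the bookkeeping you flag (re-justifying the Section~\ref{s:ctcpp} formalism on each moduli space) load-bearing, whereas the paper's pushforward formulation sidesteps it entirely. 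What your approach buys is that the displayed formula and the identification $\phi^{cs}(\tilde\gamma^i)=\gamma^i$ fall out of the same computation, rather than being deduced afterwards from the membership statement together with the observation ${ev_1^c}^*(\tilde\gamma^i)\cup ev_2^*(\gamma)={ev_1}^*(\phi^{cs}(\tilde\gamma^i))\cup_{cs}ev_2^*(\gamma)$, which is how the paper concludes; the treatment of $L^{Y}$ is parallel in both.
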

\begin{proof}
The generating function $S^{Y}(q, \bt, z)(\gamma)$ may be alternatively defined as 
\[\gamma + \sum_{d \in \op{Eff}} \sum_{k \geq 0} \frac{q^d}{k!}
 \wt{ev_1}_* \left(\frac{ev_2^*(\gamma)}{z - \psi} \prod_{j=1}^{k} \hat{ev}_j^{Q *}(\bt) \cap \left[Q_{0, 2|k}(Y, d)\right]^{vir} \right),\]
where $\wt{ev_1}$ is the composition of ${ev_1}$ with the involution $I\colon IY \to IY$.
Both $ev_1$ and $ev_2$ are proper by Lemma~\ref{l:propev}.  By \cite[Proposition~2.5]{Shoe1} $\wt{ev_1}_*$ and ${ev_2}^*$ preserve cohomology of compact type.  This proves the first statement.
The same argument applies to $L^{Y}(q, \bt, z)$.  

The final statement follows from the first together with the 
observation that, for $\tilde \gamma^i \in H^*_{CR, cs}(Y)$ and $\gamma \in H^*_{CR, ct}(Y)$, 
$${ev_1^c}^*(\tilde \gamma^i) \cup ev_2^*(\gamma) = {ev_1}^*(\phi^{cs}(\tilde \gamma^i)) \cup_{cs} ev_2^*(\gamma).$$   
\end{proof}

For the remainder of the paper, we will assume the following:
\begin{assumption}\label{a:littleas} 
 The compact type subspace $H^*_{CR,ct}(Y)$ is spanned by the pushforwards from smooth proper substacks $Z$ lying in $w^{-1}(0)$.  \end{assumption}
If Assumption~\ref{a:littleas} holds, then $ \phi^w \left(\cc H_{ct}(Y, w)\right) = H^*_{CR,ct}(Y).$  We expect it to hold quite generally.  

\begin{lemma}
The assumption holds for $Y$ a toric stack ($G \cong (\CC^*)^k$).
\end{lemma}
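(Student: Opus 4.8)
The plan is to combine two ingredients: a structural description of the compact type cohomology of a toric stack, and the observation that the potential must vanish on any proper toric substack for weight reasons. First I would reduce to the twisted sectors. Since $G$ is abelian, the inertia stack decomposes as $IY = \coprod_g Y_g$ with $Y_g = [(V^g)^{ss}(\theta)/G]$, and each $Y_g$ is again a smooth toric Deligne--Mumford stack. The compact type subspace respects this decomposition, $H^*_{CR,ct}(Y) = \bigoplus_g H^*_{ct}(Y_g)$, because the map $\phi^{cs}$ from compactly supported cohomology is defined sector by sector. Thus it suffices to treat each $Y_g$ separately, and for notational ease I will argue for one such sector, abbreviated $Y$.

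Next I would invoke the toric description of cohomology. For a smooth toric DM stack, $H^*$ is generated as a ring by the toric divisor classes, and every monomial in these classes is (a multiple of) the Poincar\'e dual of an intersection of toric divisors, i.e.\ of a toric substack $Z_S = [\{x_i = 0 : i \in S\}^{ss}/G]$. Passing to Borel--Moore homology via Poincar\'e duality, $H^{BM}_*(Y)$ is generated by the fundamental classes of the toric orbit closures, and the image of singular homology --- equivalently the image of $H^*_{cs}(Y) \to H^*(Y)$, which is the compact type subspace $H^*_{ct}(Y)$ --- is spanned by the fundamental classes of the \emph{proper} toric orbit closures. (This is the main technical input; it follows from the Bialynicki--Birula cell decomposition of a toric stack, under which the compact cells correspond to the complete strata.) Since the image of $H^*_{cs} \to H^*$ is an ideal and the divisor classes generate $H^*(Y)$, while capping a proper substack class with a divisor yields the class of a closed substack of a proper substack (hence again proper), it follows via the projection formula that $H^*_{ct}(Y)$ is spanned by the pushforwards $\{j_*(\alpha) : \alpha \in H^*(Z_S),\ Z_S \text{ proper toric}\}$.

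It remains to verify that each proper toric substack $Z$ lies in $w^{-1}(0)$, which is the conceptual heart of the matter. The $R$-charge $\CC^*_R$ commutes with $G$ and meets it in $\langle J \rangle$, so it descends to a one-parameter subgroup of the dense torus $T_Y = (\CC^*)^r/G$ of $Y$; in particular every toric (hence $T_Y$-invariant) substack $Z$ is $\CC^*_R$-invariant. The potential $w$ descends to a $\CC^*_R$-equivariant function on $Y$ of positive weight $d_w$, and its pullback to the sector is the restriction $w|_{V^g}$, again $\CC^*_R$-homogeneous of weight $d_w$ (or identically zero). Restricting to a proper $Z$, the function $w|_Z$ is a global regular function on a proper connected stack, hence constant; but $\CC^*_R$-equivariance with $d_w > 0$ forces this constant to vanish, so $w|_Z \equiv 0$ and $Z \subseteq w^{-1}(0)$. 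Each such $Z$ is smooth and cut out by the coordinate sections $\{x_i\}_{i \in S}$ of a sum of line bundles, so the pushforward $j'_*$ of Definition~\ref{d:BMpush} is defined and, by Lemma~\ref{l:jfactors}, $j_* = \phi^w \circ j'_*$; the classes $j_*(\alpha)$ therefore span $H^*_{CR,ct}(Y)$ as required.

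I expect the main obstacle to be making the toric homology input of the second paragraph fully precise in the stacky, non-complete setting --- namely that the image of singular homology in Borel--Moore homology is exactly the span of proper orbit closures --- whereas the vanishing of $w$ on proper substacks is a soft consequence of positive-weight equivariance once the $R$-charge is recognized as acting through the dense torus.
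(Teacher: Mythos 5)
Your proof follows essentially the same route as the paper's: identify the compact type subspace, sector by sector, with the span of pushforwards from proper toric strata, then use properness together with positivity of the $R$-charge weight $d_w$ to force $w$ to vanish on each such stratum. The toric-homology input you flag as the main obstacle is exactly what the paper imports from Borisov--Horja's description of compactly supported cohomology of toric varieties, and your vanishing argument via $\CC^*_R$-equivariance is a minor variant of the paper's, which instead factors $w|_{Y_I}$ through the affine quotient $\spec\left(\CC[x_i]_{\{i \notin I\}}^G\right) = \spec \CC$.
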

\begin{proof}
We first consider the case that $Y = V \sslash_\theta G$ is a simplicial toric \emph{variety}. Let $V = \spec \left( \CC[x_i]_{\{1\leq i\leq r\}}\right)$.  
Given $I \subset \{1, \ldots, r\}$, let $Y_I \subset Y$ denote the closed subspace defined by the equations $\{x_i = 0\}_{i \in I}$.

Let $\Sigma$ denote the toric fan for $Y$, which may be obtained from the GIT description by, e.g., \cite{CIJ}.
Following the description of Borisov--Horja of compactly supported cohomology \cite[Proposition~2.4]{BHHMS}, the compact type cohomology $H^*_{ct}(Y)$ is generated by the images of the maps
$H^*(Y_I) \to H^*(Y)$
for $I $ ranging over 
all subsets of $ \{1, \ldots, r\}$ such that $\sigma_I := \op{cone}\{v_i\}_{i \in I}$ is a cone of $\Sigma$ and the interior $\sigma_I^\circ$ is contained in $|\Sigma|^\circ$.  One can check,  for these choices of $I$, that $Y_I$ is proper over $\spec \CC$.  
So to verify Assumption~\ref{a:littleas} in this case, it suffices to show that for each such choice of $I$, the restriction of $w$ to $Y_I$ 
is identically zero.  

$Y_I$ may be constructed as a GIT quotient of $\spec \left( \CC[x_i]_{\{i \notin I\}}\right)$ by $G$.  Because $Y_I$ is proper over $\spec \CC$, any algebraic function on $Y_I$ is constant.  As a result, any $G$-invariant function in $\CC[x_i]_{\{i \notin I\}}$ is constant on a nonempty open subset of $\spec \left( \CC[x_i]_{\{i \notin I\}}\right)$, and therefore on all of $\spec \left( \CC[x_i]_{\{i \notin I\}}\right)$.   It follows that that $\CC[x_i]_{\{i \notin I\}}^G = \CC$.
Because $w$ is homogeneous of degree $d_w > 0$ with respect to the $R$-charge, it contains no constant term.  Therefore $w$ is zero on $\spec \CC$.  Since $w|_{Y_I}: Y_I \to \CC$ factors through $\spec \left(\CC = \CC[x_i]_{\{i \notin I\}}^G\right)$, we conclude that $Y_I \subset w^{-1}(0)$.

In the general case when $Y$ is a stack, we must consider the Chen--Ruan cohomology $H^*_{CR,ct}(Y) = H^*_{ct}(IY)$.  
Observe that:
\begin{enumerate}
\item each component $Y_g \subset IY$ is itself a toric stack arising as a GIT quotient;
\item the cohomology of $Y_g$ (with coefficients in $\CC$) is equal to the cohomology of its coarse space, the simplicial toric variety $|Y_g|$.
\end{enumerate}
Working component by component, the result then follows from the previous case.
\end{proof}

By Assumption~\ref{a:littleas}, the map $\phi^w\colon \cc H_{ct}(Y, w) \to H^*_{CR,ct}(Y)$ is surjective.  Choose a splitting 
$$\sigma^w\colon H^*_{CR,ct}(Y) \to \cc H_{ct}(Y, w).$$
Denote the image of $\sigma^w$ by $ \cc R_c(Y, w)$.
Although there is no canonical choice of $ \cc R_c(Y, w)$, by the broad vanishing result of Corollary~\ref{c:bv}, the $h=0$, $n=2$ GLSM invariants with heavy point insertions in $\cc H_{ct}(Y, w)$ are fully determined by the GLSM invariants with insertions from $ \cc R_c(Y, w)$.

\begin{definition}\label{d:SYw}
Let $\{ \gamma_i\}_{i \in I}$ be a basis for $H^*_{CR,ct}(Y)$ and let 
$\{ \gamma^i\}_{i \in I}$ be the dual basis. 
Define the (light point, LG quasimap)  $S$-operator on $\cc H_{ct}(Y, w)[[q]][[t^i]][z, z^{-1}]$ by:
\[S^{(Y, w)}(q, \bt, z)(\gamma) := \sum_{i \in I} \sigma^w(\gamma_i) \brr{ \frac{ \sigma^w(\gamma^i)}{z- \psi}, \gamma}_{0, 2}^{(Y, w)}(\bt),\]
where $\bt = \sum_{1 \leq j \leq m} t^j T_j$ with $T_j\in H^*([ V^J/(G/\langle J\rangle)], w^{+\infty})$,  and the variables $t^j$ are formal parameters.   
\end{definition}
\begin{proposition}\label{p:Srel} Whenever $P(q, \bt, z) \in H^*_{CR, ct}(Y)[[q]][[t^i]][z, z^{-1}]$,
\[S^{(Y, w)}(q, \tau(\bt), z)(\sigma^w(P(q, \bt, z))) = \sigma^w(S^{Y}(q, \bt, z)(P(q, \bt, z))).\]
\end{proposition}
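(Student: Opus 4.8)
The plan is to unwind both sides into their defining sums and reduce the whole identity to a single termwise comparison of two‑pointed genus‑zero generating functions, which will then follow directly from Theorem~\ref{t:complight} together with the fact that $\sigma^w$ is a section of $\phi^w$, so that $\phi^w\circ\sigma^w=\op{id}$. First I would expand the right‑hand side: since $P(q,\bt,z)$ lies in $H^*_{CR,ct}(Y)[[q]][[t^i]][z,z^{-1}]$, Lemma~\ref{l:ct} guarantees that $S^Y(q,\bt,z)(P)$ again lies in compact type cohomology and is computed by the compact type formula
$$S^{Y}(q, \bt, z)(P) = \sum_{i \in I} \gamma_i \brr{ \gamma^i, \frac{P}{z- \psi}}_{0, 2}^{Y}(\bt),$$
with $\{\gamma_i\}_{i\in I}$ a basis of $H^*_{CR,ct}(Y)$ and $\{\gamma^i\}_{i\in I}$ the dual basis. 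Because $\sigma^w$ is $\CC[[q]][[t^i]][z,z^{-1}]$‑linear and the brackets are scalar‑valued, it commutes with the sum, so $\sigma^w\!\left(S^Y(P)\right)=\sum_{i\in I}\sigma^w(\gamma_i)\,\brr{\gamma^i, P/(z-\psi)}_{0,2}^Y(\bt)$. By Definition~\ref{d:SYw}, the left‑hand side expands as $\sum_{i\in I}\sigma^w(\gamma_i)\,\brr{\sigma^w(\gamma^i), \sigma^w(P)/(z-\psi)}_{0,2}^{(Y,w)}(\tau(\bt))$, where $\sigma^w(P)$ and $\sigma^w(\gamma^i)$ denote the chosen compact type lifts of the heavy insertions. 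Since $\sigma^w$ is injective, the classes $\{\sigma^w(\gamma_i)\}_{i\in I}$ are linearly independent, so it suffices to match the scalar coefficient of each $\sigma^w(\gamma_i)$, i.e.\ to prove for every $i$ that
$$\brr{\sigma^w(\gamma^i), \frac{\sigma^w(P)}{z- \psi}}_{0,2}^{(Y, w)}(\tau(\bt)) = \brr{\gamma^i, \frac{P}{z- \psi}}_{0,2}^{Y}(\bt).$$

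Next I would establish this coefficient identity. Both sides are generating functions obtained by summing over the effective degree $d$ and the number $k$ of light points, weighted by $q^d/k!$, and---after expanding $1/(z-\psi)=\sum_{m\ge 0}\psi^m/z^{m+1}$---over the power of $\psi$ at the second marking. By multilinearity of all pairings in their cohomological arguments, it is enough to check equality of the individual invariants appearing coefficientwise. For fixed $d$, $k$, and $\psi$‑powers $k_1,k_2$, these are exactly the GLSM invariant $\br{\sigma^w(\gamma^i)\psi^{k_1},\,\sigma^w(P)\psi^{k_2}\mid \tau(T_{j_1}),\dots,\tau(T_{j_k})}_{0,2|k,d}^{(Y,w)}$ and the corresponding quasimap invariant with $\gamma^i, P, T_{j_\ell}$ in place of their $\sigma^w$‑ and $\tau$‑images. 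Since $\sigma^w(\gamma^i),\sigma^w(P)\in\cc H_{ct}(Y,w)$ and each $T_{j_\ell}\in H^*([V/G])$, Theorem~\ref{t:complight} applies directly and identifies the GLSM invariant with the quasimap invariant whose heavy insertions are $\phi^w\sigma^w(\gamma^i)$ and $\phi^w\sigma^w(P)$ and whose light insertions are $T_{j_\ell}$. Using $\phi^w\circ\sigma^w=\op{id}$, these heavy insertions reduce to $\gamma^i$ and $P$, yielding precisely the quasimap invariant on the right. Resumming over $d$, $k$, and $m$ with the same formal coefficients recovers the coefficient identity, and substituting it back into the expansion of the previous paragraph completes the proof.

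The only genuine care needed---and the step I would treat as the main, though purely bookkeeping, obstacle---is the organization of the generating‑function expansions: the variable $\bt$ occurs simultaneously inside $P(q,\bt,z)$ and inside the light insertions $\tau(\bt)$, and the factor $1/(z-\psi)$ must be expanded in powers of $\psi$ before invoking Theorem~\ref{t:complight}, which is stated only for fixed cohomology classes and fixed data $(k_1,k_2,k,d)$. I would therefore first reduce, by linearity, to the case where $P$ is a single class of $H^*_{CR,ct}(Y)$ times a fixed monomial in $q$, the $t^i$, and $z$, apply Theorem~\ref{t:complight} to each resulting term, and then reassemble the series. No geometric input beyond Theorem~\ref{t:complight} and the splitting identity $\phi^w\circ\sigma^w=\op{id}$ is required.
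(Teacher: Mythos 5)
Your proposal is correct and follows essentially the same route as the paper, whose proof of Proposition~\ref{p:Srel} is simply the observation that it follows immediately from Theorem~\ref{t:complight}; you have merely made explicit the termwise expansion, the use of Lemma~\ref{l:ct} to pass to the compact type dual basis, and the identity $\phi^w\circ\sigma^w=\op{id}$ that the paper leaves implicit.
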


\begin{proof}
This follows immediately from Theorem~\ref{t:complight}.
\end{proof}

\subsection{$I$-functions}

\begin{definition}\label{d:BigI}
A $(0^+, 0^+)$ \emph{$I$-function} for $Y$ is any function $I(q, \bt, z)$ of the form 
$$S^{Y}(q, f(\bt), z)P(q, \bt, z)$$
where $f \in H^*([V/G])[[t^i]]$ is a formal  cohomology-valued function satisfying $f(0) = 0$, and  $P(q, \bt, z)$ lies in $H^*_{CR}(Y)[[q]][[t^i]][z]$ (in particular it contains only \emph{positive} powers of $z$).

\end{definition}
\begin{remark}
The notation ``$(0^+, 0^+)$'' comes from \cite{CKbig}, where $S^Y$ is denoted by $\mathbb{S}^{(0^+ \theta, 0^+)}$.
The first $0^+$ in the expression indicates that we are considering $0^+$-stable quasimap invariants of $[V\sslash_\theta G]$ and the second indicates that all marked points after the first two are light points.
\end{remark}

\begin{remark}
Let $S^Y_{\op{GW}}$ denote the analogous operator to $S^Y$, but replacing quasimap invariants with Gromov--Witten invariants and replacing light points with heavy points \cite[(10.14)]{CK}.  Then $S^Y_{\op{GW}}(q, \bt, z)(\mathbf 1)$ is Givental's $J$-function, $J^Y(q, \bt, z)$.   In the language of Givental's symplectic formalism, if a function $I(\bt ,z)$ takes the form $S^Y_{\op{GW}}(q, f(\bt), z)P(q, \bt, z)$ with
$P(q, \bt, z)$  in $H^*_{CR}(Y)[[q]][[t^i]][z]$, then $I(\bt ,z)$ lies on a tangent space $T_\tau \cc L^Y$  of the overruled Lagrangian cone $\cc L^Y$ for $\tau = J^Y(q, f(\bt), z)$.  
Results on $\epsilon$-wall crossing such as \cite[Theorem~7.3.1]{CK1} and \cite[Theorem~4.6]{CCKbig} suggest that $(0^+, 0^+)$ $I$-functions will also lie on a tangent space $T_\tau \cc L^Y$ for some $\tau$.
This is the motivation behind Definition~\ref{d:BigI}.

The relationship between 
the $I$-functions for toric stacks defined in \cite{CCKbig, CCIT2}
and other $(0^+, 0^+)$ $I$-functions
will be clarified in Proposition~\ref{p:Iderv}.  The connection to GLSM $I$-functions (as defined below) will be given in Theorem~\ref{t:dI}.
\end{remark}

Definition~\ref{d:BigI}  can be generalized to the setting of GLSMs.  As usual we restrict our attention to compact-type insertions.
\begin{definition}\label{d:BigIGLSM}
A $(0^+, 0^+)$ GLSM \emph{$I$-function} for $(V, G, \theta, w)$ is any function of the form 
$$S^{(Y, w)}(q, f(\bt), z)P(q, \bt, z)$$
where $f \in H^*([ V^J/(G/\langle J\rangle)], w^{+\infty})[[t^i]]$ is a formal  cohomology-valued function satisfying $f(0) = 0$, and $P(q, \bt, z) $ lies in $ \cc H_{ct}(Y, w)[[q]][[t^i]][z].$
\end{definition}

\begin{lemma}\label{l:Ict}
If a $(0^+, 0^+)$ $I$-function for $Y$ $I(q, \bt ,z)= S^{Y}(q, \bt, z) P(q, \bt, z)$ is supported in cohomology of compact type, then so is $P(q, \bt, z)$.\end{lemma}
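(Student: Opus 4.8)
The plan is to invert the operator $S^{Y}(q, \bt, z)$ and then appeal to the compact-type invariance recorded in Lemma~\ref{l:ct}. The entire content of the statement is concentrated in the fact that $S^{Y}$ and its inverse both preserve $H^*_{CR,ct}(Y)$, so the argument should be short and the real work has already been carried out.

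First I would recall from Definition~\ref{d:SY} that, viewed as an operator on $H^*_{CR}(Y)[[q]][[t^i]][z, z^{-1}]$, the operator $S^{Y}(q, \bt, z)$ is invertible with inverse $L^{Y}(q, \bt, z) = S^{Y}(q, \bt, -z)^\star$. This invertibility is automatic from the defining expansion: one has $S^{Y}(q, \bt, z) = \mathrm{id} + (\text{terms of strictly positive order in } q \text{ and the } t^i)$, so a formal inverse exists and coincides with $L^{Y}$ by \cite{CKbig}. The same remark applies verbatim if the second slot is filled by $f(\bt)$ with $f(0) = 0$ as in Definition~\ref{d:BigI}, so it is harmless to phrase the proof with $S^{Y}(q, \bt, z)$ exactly as in the lemma.

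Next, applying $L^{Y}(q, \bt, z)$ to the defining identity $I(q, \bt, z) = S^{Y}(q, \bt, z)\, P(q, \bt, z)$ gives
\[ P(q, \bt, z) = L^{Y}(q, \bt, z)\, I(q, \bt, z). \]
By Lemma~\ref{l:ct} the operator $L^{Y}(q, \bt, z)$ carries $H^*_{CR,ct}(Y)[[q]][[t^i]][z, z^{-1}]$ into itself. Since by hypothesis every coefficient of $I$ lies in $H^*_{CR,ct}(Y)$, the same holds for $P = L^{Y} I$. Combined with the a priori knowledge that $P \in H^*_{CR}(Y)[[q]][[t^i]][z]$, this yields $P \in H^*_{CR,ct}(Y)[[q]][[t^i]][z]$, as claimed.

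I do not expect a serious obstacle here. The one point deserving a word of care is that the inversion $P = L^{Y} I$ be legitimate, i.e.\ that $L^{Y}$ really is a (left) inverse of $S^{Y}$ and not merely a formal adjoint; this is guaranteed because both operators are unipotent perturbations of the identity in the $(q, t^i)$-adic topology, so left and right inverses agree and are unique, and the displayed identity follows by applying $L^{Y}$ coefficientwise. Everything substantive is already contained in Lemma~\ref{l:ct}.
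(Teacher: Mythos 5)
Your proposal is correct and follows essentially the same route as the paper: the paper's proof also writes $P(q, \bt, z) = L^{Y}(q, \bt, z) I(q, \bt, z)$ (citing \cite[Proposition~3.7]{CCKbig} for the inversion) and then invokes Lemma~\ref{l:ct} to conclude that $L^{Y}$ preserves the compact type subspace.
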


\begin{proof} By \cite[Proposition~3.7]{CCKbig}, we have $P(q, \bt, z) = L^Yf(q, \bt, z)I(q, \bt ,z)$.   By Lemma~\ref{l:ct},  $L^{Y}(q, \bt, z)$ preserves the compact type subspace.
\end{proof}

Next we recall the definition of the \emph{Big $I$-function}, $\II^Y(q, \bt, z)$, of \cite{CKbig, CCKbig}.  The $0^+$-quasimap graph space $QG_{0,n|k}(Y, d)$ is the space of genus zero quasimaps (with $k$ light points) to $[V\sslash_\theta G] \times [\CC^2 \sslash_{\op{id}} \CC^*]$ of degree $(d, 1)$which are $0^+$-stable on the first factor and $\infty$-stable (Kontsevich-stable) on the second factor:
$$QG_{0,n|k}(Y, d) := Q_{0,n|k}^{0^+, \infty}(Y \times \PP^1, (d,1)).$$
The graph space may be viewed as the moduli space of quasimaps to $Y$ such that each source curve has a distinguished component whose coarse underlying curve is equipped with a parametrization, induced by the degree one map to $\PP^1$.  

Consider the $\CC^*$-action on $\PP^1$ which scales the first homogeneous coordinate.  This induces a natural 
 $\CC^*$-action on the graph space.  The evaluation maps
 $ev_i:QG_{0,n|k}(Y, d) \to IY$ for $1 \leq i \leq n$ and $\hat{ev}_j^{Q}\colon QG_{0,n|k}(Y, d) \to [V/G]$ for $1 \leq j \leq k$ are 
 equivariant, where the action of $\CC^*$ on $IY$ and $[V/G]$ is trivial.  
  Let $z \in H^*_{\CC^*}(pt)$
   denote the equivariant parameter $$z := e_{\CC^*}(T_0\PP^1),$$ the first Chern class of $T\PP^1$ at $0 \in \PP^1$.   Denote by $p_0$ and $p_\infty$ the classes in $H^*_{\CC^*}(\PP^1)$ determined by the conditions:
$$p_0|_0 = z, p_0|_\infty = 0, p_\infty|_0 = 0, p_\infty|_\infty = -z.$$

Fix $ n=1$.  Define $F^{k,d}_{1,0}$ to be the $\CC^*$-fixed locus such that the source curve is irreducible, the heavy marked point $p_1$ lies over $\infty$, there is a basepoint of degree $d(L_\theta)$ over $0$, and all light points are concentrated over $0$.  Let $\iota_d\colon F^{k,d}_{1,0} \to QG_{0,n|k}(Y, d)$ denote the inclusion.  
\begin{definition}
Define 
$$ \op{Res}_{F^{k,d}_{1,0}}(\bt^k) := \frac{ev_1^*(p_\infty) \cup \prod_{j=1}^k \hat{ev}_j^{Q *}(\bt) \cap [F^{k,d}_{1,0}]^{vir}}{e_{\CC^*}\left( N^{vir}_{F^{k,d}_{1,0}}\right)},
$$
where $N^{vir}_{F^{k,d}_{1,0}}$ is the virtual normal bundle in the sense of \cite{GrP}.
Define
\begin{equation}\label{e:I}
\II^Y(q, \bt, z) := \sum_{d \in\eff(V, G, \theta)} \sum_{k \geq 0} \frac{q^d}{k!} (\wt{ev_1} \circ {\iota_d})_*\op{Res}_{F^{k,d}_{1,0}}(\bt^k).
\end{equation} 
We view $\II^Y(q, \bt, z)$ as a power series in the $q$ and $t^i$ variables, whose coefficients lie in the localized equivariant cohomology ring
$H^*_{CR, \CC^*, \op{loc}}(Y) = H^*_{CR}(Y)[z, z^{-1}].$
\end{definition}
By \cite[Remark~4.3]{CCKbig}, $\II^Y(q, \bt, z)$ is equal to the $I$-function defined in \cite[Definition~4.1]{CCKbig}.  In the toric case, if $\bt \in H^2([V/G])$, this coincides with the ``S-extended stacky $I$-function'' of \cite{CCIT2}.
By  \cite[(4.2)]{CCKbig},  it is a $(0^+, 0^+)$ $I$-function in the sense of Definition~\ref{d:BigI}.

Given $\rho \in H^*([V/G])$, define 
$$z\partial_\rho \II^Y(q, \bt, z) := z\frac{\partial}{\partial s}  \II^Y(q, \bt + s \rho, z)|_{s = 0}.$$ 
\begin{proposition}\label{p:Iderv}
The derivative $z\partial_\rho \II^Y$ is a $(0^+, 0^+)$ $I$-function for $Y$.
\end{proposition}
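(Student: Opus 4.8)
The plan is to differentiate the factorization $\II^Y(q,\bt,z) = S^Y(q, f(\bt), z)\,P(q,\bt,z)$ provided by the fact, recorded just before this proposition, that $\II^Y$ is itself a $(0^+,0^+)$ $I$-function, and to check that $z\partial_\rho$ preserves this shape. Substituting $\bt \mapsto \bt + s\rho$ and applying the product rule at $s=0$ gives
\[
z\partial_\rho \II^Y = z\left.\frac{\partial}{\partial s}\right|_{s=0} S^Y(q, f(\bt+s\rho), z)\cdot P \;+\; S^Y(q, f(\bt), z)\cdot\bigl(z\partial_\rho P\bigr).
\]
The second summand is immediately of the desired form: since $P$ lies in $H^*_{CR}(Y)[[q]][[t^i]][z]$ and $\partial_\rho$ differentiates only the formal variables $t^i$ and so does not change the $z$-degree, $z\partial_\rho P$ again lies in $H^*_{CR}(Y)[[q]][[t^i]][z]$. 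Thus this term equals $S^Y(q, f(\bt),z)$ applied to a function with only positive powers of $z$, and all the work is in the first summand.

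For the first summand I would invoke the fundamental-solution differential equation for the light-point quasimap $S$-operator: for $\rho \in H^*([V/G])$,
\[
z\partial_\rho S^Y(q, \bt, z)(\gamma) = S^Y(q, \bt, z)\bigl(\rho\bullet_\bt \gamma\bigr),
\]
where $\bullet_\bt$ is the (light-point, quasimap) quantum product, an endomorphism of $H^*_{CR}(Y)$ depending on $q$ and $\bt$ but \emph{not} on $z$. Granting this, write $g := \partial_s f(\bt+s\rho)\big|_{s=0}\in H^*([V/G])[[t^i]]$ for the directional derivative of $f$. By the chain rule, $z\,\partial_s S^Y(q,f(\bt+s\rho),z)\big|_{s=0} = z\,DS^Y|_{f(\bt)}(g)$, and expanding $g$ in the basis $\{T_j\}$ and using linearity of the displayed equation in the insertion, this equals $S^Y(q,f(\bt),z)\circ\bigl(g\bullet_{f(\bt)}\bigr)$. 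Hence the first summand is $S^Y(q, f(\bt), z)\bigl(g\bullet_{f(\bt)} P\bigr)$, and since $\bullet_{f(\bt)}$ is $z$-independent while $P$ is polynomial in $z$, the argument $g\bullet_{f(\bt)}P$ again has only positive powers of $z$.

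Combining the two summands yields
\[
z\partial_\rho\II^Y = S^Y(q, f(\bt),z)\bigl(g\bullet_{f(\bt)} P + z\partial_\rho P\bigr),
\]
whose head $\widetilde P := g\bullet_{f(\bt)}P + z\partial_\rho P$ lies in $H^*_{CR}(Y)[[q]][[t^i]][z]$. As the light-point shift $f$ (and in particular the condition $f(0)=0$) is unchanged, this exhibits $z\partial_\rho\II^Y$ in the form required by Definition~\ref{d:BigI}, completing the argument.

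The main obstacle is justifying the displayed differential equation for $S^Y$ in the light-point directions. In Gromov--Witten theory this is the standard statement that the $S$-matrix is a fundamental solution of the Dubrovin connection, proved by a genus-zero topological recursion relation; here one must verify the analogue for $0^+$-stable quasimap invariants with light markings. I expect this to be available directly from the double-bracket formalism of \cite{CKbig, CCKbig}, where $S^Y$, its inverse $L^Y$, and their flatness are developed, or otherwise to follow from a short recursion splitting off the differentiated light point. The two technical points to watch are that light markings carry no $\psi$-classes, which is precisely what forces $\bullet_\bt$ to be $z$-independent, and that the relevant recursion relations are valid for quasimaps. An alternative that avoids naming the product is to pass through the relation $P = L^Y(q,f(\bt),z)\,\II^Y$ of \cite[Proposition~3.7]{CCKbig} used in Lemma~\ref{l:Ict}: applying $L^Y(q,f(\bt),z) = S^Y(q,f(\bt),z)^{-1}$ to $z\partial_\rho\II^Y$ and using the derivative-of-inverse formula reduces the claim to the same differential equation, so the essential obstacle is unchanged.
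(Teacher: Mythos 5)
Your reduction is tidy, but it bottoms out at a statement that is not actually available: the quantum differential equation $z\partial_\rho S^Y(q,\bt,z)(\gamma) = S^Y(q,\bt,z)(\rho\bullet_\bt\gamma)$ for the light-point, $0^+$-stable quasimap $S$-operator. You flag this yourself as the main obstacle, and it is a genuine one rather than a routine citation. The usual proof of the fundamental-solution property in Gromov--Witten theory rests on the topological recursion relation on $\Mbar_{0,n}$; at $\epsilon=0^+$ with only two heavy markings the underlying curves form $\f M_{0,2}(BG,d)^\theta$, whose fibers are chains of $\PP^1$'s (Lemma~\ref{l:chain}), and the TRR is not available in that form. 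The standard substitute in quasimap theory is $\CC^*$-localization on the graph space, and neither \cite{CKbig} nor \cite{CCKbig} records a QDE in the light-point directions. So as written your argument reduces the proposition to an unproved lemma whose proof would require essentially the same localization computation that the proposition is meant to package. (The classical limit of your equation checks out, so it is plausibly true, but it is not a black box you can invoke.)

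The paper's proof sidesteps the QDE entirely. It does not differentiate the factorization $\II^Y=S^Y P$; instead it directly manufactures the new head as a graph-space invariant, $P_\rho(q,\bt,z):=\sum_i \gamma_i\,\brr{\gamma^i\otimes p_\infty\,|\,\rho\otimes p_0}_{0,1|1}^{QG(Y)^{0^+,0^+}}(\bt)$, with the class $\rho\otimes p_0$ inserted at one distinguished light marking. Because this is an honest (non-localized) equivariant class, it lies in $H^*_{CR}(Y)[[q]][[t^i]][z]$ for free --- no product rule, no quantum product --- and $\CC^*$-localization on the graph space identifies it with ${S^Y}^*(q,\bt,-z)\left(z\partial_\rho\II^Y\right)$, whence $z\partial_\rho\II^Y=S^Y(q,\bt,z)P_\rho(q,\bt,z)$. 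If you want to rescue your route, the most economical fix is to prove your displayed differential equation by exactly this kind of graph-space localization with one distinguished light point; but at that point you will have reproduced the paper's argument with an extra layer on top.
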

\begin{proof}
This was stated in (2.8) of \cite{KiLh} in the case where the target is projective space.  The proof follows  from localization on the graph space as in Section~5.4 of \cite{CK1}.  We outline the argument here.

Let $\{ \tilde \gamma_i\}_{i \in \tilde I}$ be a basis for $H^*_{CR}(Y)$ and let 
$\{ \tilde \gamma^i\}_{i \in \tilde I}$ be the dual basis  in $H^*_{CR,cs}(Y)$.  
Define $$P_{\rho}(q, \bt, z) := \sum_{i \in \tilde I} \tilde \gamma_i \brr{\tilde \gamma^i \otimes p_\infty| \rho \otimes p_0}_{0, 1|1}^{QG(Y)^{0^+, 0^+}}(\bt),$$ 
where $\brr{ -}^{QG(Y)^{0^+, 0^+}}(\bt)$ is the double bracket generating function of $(0^+, 0^+)-stable$ \emph{graph space} invariants as in \cite[Section~2.2]{KL}.
 Because $P_{\rho}(q, \bt, z)$ is defined before localization, it lies in $H^*_{CR}(Y)[[q]][[t^i]][z]$ (i.e. it contains only positive powers of the equivariant parameter $z$).

 After localization on the graph space, we obtain
$$P_\rho(q, \bt, z) = {S^{Y}}^*(q, \bt, -z)(z \partial_\rho \II^Y(q, \bt, z)).$$  Applying the operator $S^{Y}(q, \bt, z)$ to both sides, we conclude: 
$$z \partial_\rho \II^Y(q, \bt, z) = S^{Y}(q, \bt, z) P_\rho(q, \bt, z).$$
\end{proof}

We arrive at one of the main theorems of the paper, which describes how $(0^+, 0^+)$ GLSM $I$-functions arise as derivatives of the big $I$-function of $Y$. 
\begin{theorem}\label{t:dI}  
 Given a class $\rho \in H^*([V/G])$, if
$z \partial_\rho \II^Y(q, \bt, z)$ lies in cohomology of compact type, then $\sigma^w\left(z \partial_\rho \II^Y(q, \bt, z)\right)$ is a $(0^+, 0^+)$ GLSM $I$-function for $(V,G, \theta, w)$.
\end{theorem}

\begin{proof}
By Proposition~\ref{p:Iderv}, we can write $z \partial_\rho \II^Y(q, \bt, z)$ as $S^{Y}(q, \bt, z) P_\rho(q, \bt, z)$ for  $ P_\rho(q, \bt, z) \in H^*_{CR}(Y)[[q]][[t^i]][z]$.  
By Lemma~\ref{l:Ict}, $ P_\rho(q, \bt, z)$ lies in compact type cohomology: $ P_\rho(q, \bt, z) \in H^*_{CR, ct}(Y)[[q]][[t^i]][z]$.  Applying Proposition~\ref{p:Srel}, we conclude that 
\begin{align*}
\sigma^w\left(z \partial_\rho \II^Y(q, \bt, z)\right) &=  \sigma^w\left(S^{Y}(q, \bt, z) P_\rho(q, \bt, z)\right) \\
&= S^{(Y, w)}(q, \tau(\bt), z)(\sigma^w( P_\rho(q, \bt, z))),
\end{align*}
which is of the form given in Definition~\ref{d:BigIGLSM}.
\end{proof}

\subsection{The toric case}\label{s:toriccase}
In this section we focus on the case that $G$ is a torus, and give specific conditions for when $z \partial_\rho \II^Y(q, \bt, z)$ lies in cohomology of compact type.
We then use Theorem~\ref{t:dI} and the explicit formula for $\II^Y(q, \bt, z)$ from \cite{CCKbig} to obtain an explicit $I$-function for any toric GLSM.

Let $(V, G, \theta, w)$ be a GLSM such that
 $G = (\CC^*)^k$.  Let $$\rho_1, \ldots, \rho_r \in \widehat G $$ denote the characters of $G$ which define the action on  $V = \CC^r.$ By abuse of notation we will also use  $\rho_1, \ldots, \rho_r $ to denote the corresponding cohomology  classes in $H^2([V/G])$.
 We will identify the coordinates $x_1, \ldots, x_r$ on $V$ with the homogeneous coordinates on $[V/G]$ and $[V\sslash_\theta G]$.  

In \cite{CKbig, CCKbig}, $\II^Y$ is computed explicitly for toric varieties.  Choose characters $\eta_1, \ldots, \eta_l \in \widehat G$ which generate $H^*([V/G])$ as an algebra.  Then in the expression $\bt = \sum_{1 \leq j \leq m} t^j T_j$, each $T_j\in H^*([V/G])$ may be expressed as
$$T_j = p_j(\eta_1, \ldots, \eta_l) =: p_j(\eta_s),$$
for some polynomial $p_j(x_s):=p_j(x_1, \ldots, x_l)$ in $l$ variables.  For $g \in G$, let $\ii_g$ denote the fundamental class of the twisted sector $Y_g$, and for $d \in\eff(V, G, \theta)$, let
$$g_d = \left( e^{2 \pi i \langle d, \rho_1\rangle}, \ldots, e^{2 \pi i \langle d, \rho_r\rangle}\right) \in G.$$
With this notation, the big $I$-function is given by
\begin{align}\label{e:IY}
\II^Y(\bt, q, z) =& \sum_{d \in\eff(V, G, \theta)}q^d \exp \left(  \frac{1}{z} \sum_{j=1}^l t^j p_j\left(\eta_s + z\langle d, \eta_s\rangle\right) \right) \\
& \frac{\prod_{i|  \langle d, \rho_i\rangle< 0} \prod_{ \langle d, \rho_i\rangle \leq \nu< 0} (\rho_i + ( \langle d, \rho_i\rangle - \nu)z)}
{\prod_{i|  \langle d, \rho_i\rangle> 0} \prod_{ 0 \leq \nu < \langle d, \rho_i\rangle} (\rho_i + ( \langle d, \rho_i\rangle - \nu)z)} \ii_{g_d^{-1}}, \nonumber
\end{align}
where the products on the second line run over all \emph{integers} $\nu$ in the specified range \cite[Corollary~5.6 (2)]{CCKbig}.

\begin{lemma}\label{l:properlemma}
 Fix $d \in\eff(V, G, \theta)$ and $k \geq 0$.  The  composition $$F^{k,d}_{1,0} \xrightarrow{\iota_d} QG_{0, 1|k} (Y, d) \to \spec \left( \CC[x_i]_{\{1\leq i\leq r\}}^G\right)$$ factors through $\spec \left( \CC[x_i]_{\{i|\langle d, \rho_i\rangle = 0\}}^G\right)$.  
 \end{lemma}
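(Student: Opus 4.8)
The plan is to translate the assertion into the statement that every $G$-invariant function vanishing on a suitable coordinate subspace pulls back to zero on $F^{k,d}_{1,0}$, and then to verify this one monomial at a time. Set $I_0 := \{i : \langle d, \rho_i\rangle = 0\}$, let $V_{I_0} \subset V$ be the coordinate subspace cut out by $x_j = 0$ for $j \notin I_0$, and write $R := \CC[x_i]_{1\le i \le r}^G$ and $R_0 := \CC[x_i]_{i \in I_0}^G$. Restriction of functions to $V_{I_0}$ induces a surjective ring map $R \to R_0$ whose kernel is $\{ f \in R : f|_{V_{I_0}} = 0\}$; this presents $\spec R_0 \hookrightarrow \spec R$ as a closed subscheme. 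Hence the desired factorization is equivalent to showing that the affinization map $F^{k,d}_{1,0} \to \spec R$ kills every $G$-invariant $f$ with $f|_{V_{I_0}} = 0$. Since $G$ is a torus, $R$ is spanned by the invariant monomials $m = \prod_i x_i^{a_i}$ (those with $\sum_i a_i \rho_i = 0$), and $f|_{V_{I_0}} = 0$ forces every monomial of $f$ to contain some variable $x_j$ with $a_j \ge 1$ and $\langle d, \rho_j\rangle \ne 0$. So it suffices to treat a single such invariant monomial $m$.

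First I would recall the affinization map concretely. A point of $F^{k,d}_{1,0}$ is a quasimap $(C, u)$ with $C$ an irreducible (possibly twisted) genus-zero curve, and an invariant $f$ produces the global function $f(u) \in \Gamma(C, \cc{O}_C)$, which is constant since $C$ is proper and connected; this constant is the image of the point. For our monomial, $m(u) = \prod_i u_i^{a_i}$ is a section of $\cc{L}_{\sum_i a_i \rho_i} = \cc{L}_0 = \cc{O}_C$---the orbifold monodromy at the special points is trivial precisely because $\sum_i a_i \rho_i = 0$---so $m(u)$ is again a constant $\kappa := m(u)$.

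The crux is to evaluate $\kappa$ at the basepoint $0 \in C$ using the explicit description of $F^{k,d}_{1,0}$ underlying the formula \eqref{e:IY} (see \cite{CCKbig}): the universal section satisfies $u_i \equiv 0$ whenever $\langle d, \rho_i\rangle < 0$, because $\cc{L}_{\rho_i} \cong \cc{O}_C(\langle d, \rho_i\rangle)$ then has negative degree and no nonzero sections, while $u_i$ vanishes at $0$ whenever $\langle d, \rho_i\rangle > 0$, since the full degree $d$ is absorbed by the basepoint over $0$. Now pick an index $j$ with $a_j \ge 1$ and $\langle d, \rho_j\rangle \ne 0$. If $\langle d, \rho_j\rangle < 0$ then $u_j \equiv 0$ and hence $m(u) \equiv 0$; if $\langle d, \rho_j\rangle > 0$ then $u_j(0) = 0$, so $\kappa = m(u)(0) = \prod_i u_i(0)^{a_i} = 0$. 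As $\kappa$ is constant on the connected curve $C$, its vanishing at the single point $0$ gives $m(u) \equiv 0$. Thus every $f$ with $f|_{V_{I_0}} = 0$ pulls back to zero, which is the claimed factorization.

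The step I expect to be the main obstacle is the last one: justifying the behavior of the universal section at the basepoint, namely that $u_i$ vanishes along $0$ for $\langle d, \rho_i\rangle > 0$ and vanishes identically for $\langle d, \rho_i\rangle < 0$. This is exactly where the structure of the $\CC^*$-fixed locus $F^{k,d}_{1,0}$ enters---that the entire degree $d(L_\theta)$ is concentrated in a basepoint over $0$, with the map constant away from $0$---and one must handle the twisted structure at $0$ and $\infty$ carefully so that ``$u_i$ vanishes at $0$'' and ``a degree-zero bundle is trivial'' are read correctly on the orbi-curve. The ring-theoretic reduction and the constancy of $m(u)$ are then routine.
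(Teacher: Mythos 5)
Your proof is correct, but it finishes differently from the paper's. Both arguments begin the same way: the image in $\spec\left(\CC[x_i]_{\{1\leq i\leq r\}}^G\right)$ is computed by evaluating invariant functions on the (constant) composition $C\to V/\!\!/_{\mathrm{aff}}G$, and for $\langle d,\rho_i\rangle<0$ the line bundle $\cc L_i$ has negative degree on the irreducible fiber, so $u_i\equiv 0$. The divergence is in how the variables with $\langle d,\rho_i\rangle>0$ are handled. You kill them geometrically, by arguing that $u_j$ vanishes at the basepoint over $0$ and that a constant function vanishing at one point vanishes identically --- which is exactly the step you flag as delicate, since it requires the explicit structure of the $\CC^*$-fixed locus and, for the scheme-theoretic factorization (the moduli stack need not be reduced), the vanishing $u_j|_{D_0}=0$ along the universal basepoint section rather than just at geometric points. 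The paper avoids this entirely with a purely algebraic step: $d$ determines a one-parameter subgroup $d(\CC^*)\subseteq G$, and $\CC[x_i]_{\{i\mid\langle d,\rho_i\rangle\geq 0\}}^{G}\subseteq\CC[x_i]_{\{i\mid\langle d,\rho_i\rangle\geq 0\}}^{d(\CC^*)}=\CC[x_i]_{\{i\mid\langle d,\rho_i\rangle=0\}}$, because a $d(\CC^*)$-invariant monomial in variables of nonnegative $d$-degree must have all exponents of positive-degree variables equal to zero. In fact your own monomial-by-monomial setup already contains this observation: $G$-invariance of $m=\prod_i x_i^{a_i}$ gives $\sum_i a_i\rho_i=0$, hence $\sum_i a_i\langle d,\rho_i\rangle=0$, so any invariant monomial involving a variable of nonzero $d$-degree necessarily involves one of \emph{negative} $d$-degree. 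The positive-degree case is therefore vacuous, and you can delete the basepoint analysis altogether; the negative-degree vanishing (which is robust scheme-theoretically, e.g.\ via cohomology and base change applied to $\pi_{F*}\cc L_i=0$) already proves the lemma. What your route buys, if completed carefully, is a self-contained geometric picture of the fixed locus; what the paper's route buys is that no information about the behavior of sections at the basepoint is needed at all.
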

\begin{proof}
By Theorem~2.7 of \cite{CCKbig}, $QG_{0, 1|k} (Y, d)$ is proper over $\spec \left( \CC[x_i]_{\{1\leq i\leq r\}}^G\right)$.  
Because $G$ is abelian, the  vector bundle $\cP \times_G V \to \cC$ splits as
$$\cP \times_G V = \prod_{i=1}^r \cc L_i.$$
Here $\cc L_i$ is the line bundle obtained as 
the pullback of  $[(V \times \CC_{\rho_i})/G] \to [V/G]$ via the map $\cC \to [V/G]$.  

Restrict the universal curve to $F^{k,d}_{1,0}$, denote it by $$\pi_F: \cC_F \to F^{k,d}_{1,0}.$$  By definition of $F^{k,d}_{1,0}$ and  $0^+$-stability, each fiber of $\pi_F$ is irreducible.  
If $\langle d, \rho_i\rangle < 0$, then $\cc L_i \to \cC_F$ has negative degree on each fiber.  Therefore 
the universal section of $\cc L_i \to \cC_F$ is zero, i.e. $\cC_F$ maps to the locus $\{x_i = 0\} \subset [V/G]$.  
Consequently, $F^{k,d}_{1,0}$ maps properly to the closed subset \\ $\spec \left( \CC[x_i]_{\{i|\langle d, \rho_i\rangle \geq 0\}}^G\right)$ of $\spec \left( \CC[x_i]_{\{1\leq i\leq r\}}^G\right)$.  

Next, we describe how the degree $d\in  \hom_\ZZ \left( \widehat G, \QQ\right)$ defines a subgroup of $G$.  By possibly replacing $d$ with a multiple $ n d$ for some $n \in \NN$, 
we can assume that $d $ lies in $\hom_\ZZ \left( \widehat G, \ZZ\right)$.  Consider the isomorphism
\begin{align*}
\hom\left(\CC^*, G\right)  &\to \hom_\ZZ \left( \widehat G, \ZZ\right) \\
f &\mapsto (\chi \mapsto \chi \circ f \in \hom(\CC^*, \CC^*) \cong \ZZ),
\end{align*}
which is canonical up to a choice of isomorphism $\hom(\CC^*, \CC^*) \cong \ZZ$.  Under this identification, $d(\CC^*)$ is a subgroup of $G$.  Consequently, 
$$\CC[x_i]_{\{i|\langle d, \rho_i\rangle \geq 0\}}^G \subset \CC[x_i]_{\{i|\langle d, \rho_i\rangle \geq 0\}}^{d(\CC^*)} = \CC[x_i]_{\{i|\langle d, \rho_i\rangle = 0\}}.$$  The last equality follows from the fact that the $d(\CC^*)$-invariant polynomials in $\CC[x_i]_{\{i|\langle d, \rho_i\rangle \geq 0\}}$ are exactly the polynomials which are homogeneous of degree zero with respect to the grading: $\deg_d(x_i) :=  \langle d, \rho_i\rangle$.  Thus $$\CC[x_i]_{\{i|\langle d, \rho_i\rangle \geq 0\}}^G = \CC[x_i]_{\{i|\langle d, \rho_i\rangle = 0\}}^G.$$ Combining this with the previous paragraph concludes the proof.
\end{proof}

Next we give conditions for when a derivative $\left(\prod_{i \in {\hat I}}z\partial_{\rho_i} \right) \II^Y$ is supported in cohomology of compact type.
\begin{proposition}\label{p:narrowI}
Let ${\hat I} \subset \{1, \ldots, r\}$ be such that $ \CC[x_i]_{\{i \notin {\hat I}\}}^G = \CC$.  Then 
$\left(\prod_{i \in {\hat I}}z\partial_{\rho_i} \right) \II^Y$ lies in \\$H^*_{CR, ct}(Y)[[q]][[t^i]][z, z^{-1}]$.
\end{proposition}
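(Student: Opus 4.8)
The plan is to work directly with the explicit formula \eqref{e:IY} for $\II^Y$ and reduce the statement to a single product of divisor classes, using the fact, recalled in Section~\ref{s:ctcpp}, that $H^*_{CR,ct}(Y)$ is an \emph{ideal} under cup product: if $\gamma\in H^*_{CR,ct}(Y)$ and $\delta\in H^*_{CR}(Y)$, then for any lift $\wt\gamma$ of $\gamma$ one has $\gamma\cup\delta=\phi^{cs}(\wt\gamma\cup\delta)\in\op{im}(\phi^{cs})=H^*_{CR,ct}(Y)$. Thus it will suffice, in the coefficient of each $q^d$, to exhibit a single compact type factor. First I would compute the derivative. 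Since each $\rho_i$ lies in $H^2([V/G])$, it is a linear combination of the degree-two generators $\eta_s$, and the usual divisor computation shows that $z\partial_{\rho_i}$ acts on the $q^d$-summand of \eqref{e:IY} by multiplication by $(\rho_i+z\langle d,\rho_i\rangle)$ (differentiating the exponential and clearing the $1/z$). Hence
\[\left(\prod_{i\in\hat I}z\partial_{\rho_i}\right)\II^Y=\sum_{d\in\eff(V,G,\theta)}q^d\exp\!\left(\tfrac1z\textstyle\sum_j t^j p_j(\eta_s+z\langle d,\eta_s\rangle)\right)C_d\,\ii_{g_d^{-1}},\]
where $C_d$ is $\prod_{i\in\hat I}(\rho_i+z\langle d,\rho_i\rangle)$ times the rational factor from \eqref{e:IY}. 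Since the exponential factor lies in $H^*_{CR}(Y)[z,z^{-1}]$ coefficientwise in the $t^j$, the ideal property reduces the problem to showing $C_d\,\ii_{g_d^{-1}}\in H^*_{CR,ct}(Y)[z,z^{-1}]$ for each fixed $d$.

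Next I would isolate the pure powers of the $\rho_i$. Inspecting \eqref{e:IY}, a genuine factor of $\rho_i$ (as opposed to $\rho_i+cz$ with $c\neq0$) arises exactly when $\langle d,\rho_i\rangle$ is a negative integer (the $\nu=\langle d,\rho_i\rangle$ term of the numerator) or, from the derivative, when $i\in\hat I$ and $\langle d,\rho_i\rangle=0$. Setting
\[S_d:=\{\,i:\langle d,\rho_i\rangle\in\ZZ_{<0}\,\}\cup\{\,i\in\hat I:\langle d,\rho_i\rangle=0\,\},\]
one factors $C_d=\big(\prod_{i\in S_d}\rho_i\big)\,u_d$, where every remaining numerator and denominator factor has the form $\rho_i+cz$ with $c\neq0$; after inverting $z$ these are units (as $\rho_i$ is nilpotent), so $u_d\in H^*_{CR}(Y)[z,z^{-1}]$ is well defined. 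Note that every $i\in S_d$ has $\langle d,\rho_i\rangle\in\ZZ$, so $x_i$ is a coordinate on the fixed locus $V^{g_d^{-1}}$ and $\rho_i$ restricts to an honest toric divisor class on the twisted sector $Y_{g_d^{-1}}$; thus $\prod_{i\in S_d}\rho_i\,\ii_{g_d^{-1}}$ is the class of the toric substack $\{x_i=0:i\in S_d\}$ inside $Y_{g_d^{-1}}$.

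Finally I would establish properness of that substack by reusing the grading argument from Lemma~\ref{l:properlemma}. As in the earlier toric lemma, the substack is proper over $\spec\CC$ once its affinization vanishes, i.e.\ once $\CC[x_j]_{\,j\in T}^G=\CC$ for $T=\{j:\langle d,\rho_j\rangle\in\ZZ,\ j\notin S_d\}$. Because $d(\CC^*)\subseteq G$, any $G$-invariant is homogeneous of $d$-degree zero; since every $j\in T$ has $\langle d,\rho_j\rangle\geq0$, such an invariant can only involve the variables with $\langle d,\rho_j\rangle=0$, which among $T$ are precisely the $j\notin\hat I$. Hence $\CC[x_j]_{j\in T}^G=\CC[x_j]_{\,j\notin\hat I,\ \langle d,\rho_j\rangle=0}^G\subseteq\CC[x_j]_{\,j\notin\hat I}^G=\CC$ by hypothesis. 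Therefore $\prod_{i\in S_d}\rho_i\,\ii_{g_d^{-1}}$ is a pushforward from a smooth proper toric substack, so it lies in $H^*_{CR,ct}(Y)$; the ideal property then promotes $C_d\,\ii_{g_d^{-1}}=\big(\prod_{i\in S_d}\rho_i\,\ii_{g_d^{-1}}\big)\cup u_d$, and hence the full coefficient, into $H^*_{CR,ct}(Y)[z,z^{-1}]$. I expect the main obstacle to be the bookkeeping of the second step together with the positive-weight case $i\in\hat I$, $\langle d,\rho_i\rangle>0$: those indices produce no pure $\rho_i$, so $S_d$ omits them, and it is exactly the $d(\CC^*)$-grading that guarantees they do not obstruct properness of the cutting substack.
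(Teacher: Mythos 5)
Your proof is correct, but it takes a genuinely different route from the paper's. The paper argues geometrically on the moduli space: it works with the residue definition \eqref{e:I}, observes that the derivatives $\partial_{\rho_i}$ for $i\in{\hat I}$ with $\langle d,\rho_i\rangle=0$ act as $\cup\,\rho_i={s_0}_*s_0^*$, and then uses Gysin base change to exhibit each summand as a pushforward from a fiber product $F_{{\hat I}^0_d(g)}$, whose properness is extracted from Lemma~\ref{l:properlemma} (the fixed locus already lives over $\spec(\CC[x_i]^G_{\{i|\langle d,\rho_i\rangle=0\}})$) together with the hypothesis on ${\hat I}$. You instead stay entirely inside the closed formula \eqref{e:IY}: you isolate the pure divisor factors $\prod_{i\in S_d}\rho_i$ --- crucially including the $\nu=\langle d,\rho_i\rangle$ numerator factors at negative integer weights, which play exactly the role that the vanishing of sections of negative-degree line bundles plays in Lemma~\ref{l:properlemma} --- identify that product with a class supported on the proper coordinate substack $\{x_i=0\}_{i\in S_d}\subset Y_{g_d^{-1}}$, and finish with the observation that $\op{im}(\phi^{cs})$ is an ideal. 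Both arguments bottom out in the same invariant-theory step via the one-parameter subgroup $d(\CC^*)\subseteq G$. Your version is more elementary and avoids virtual localization and refined Gysin maps, at the cost of depending on the explicit formula \eqref{e:IY} (the paper's argument would survive in settings where only the residue description is available). Two small points to make explicit: as in Lemma~\ref{l:properlemma}, $d$ must be replaced by an integer multiple before $d(\CC^*)$ makes sense (this changes neither the signs nor the vanishing of the $\langle d,\rho_j\rangle$); and the identification of $\prod_{i\in S_d}\rho_i\,\ii_{g_d^{-1}}$ with a compact type class only requires that the Euler class of $\bigoplus_{i\in S_d}L_{\rho_i}$ is supported on the (proper) zero locus of the coordinate section, so you need properness but not regularity of that section.
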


\begin{proof}  Fix $d \in   \hom_\ZZ \left( \widehat G, \QQ\right)$, $k \geq 0$, and $g \in G$.  Let $QG_{0, (g)|k} (Y, d)$
 denote the open and closed subset of 
$QG_{0, 1|k} (Y, d)$ which maps to the twisted sector $Y_g$ under $ev_1$.
Let
$F^{k,d}_{g,0}$ denote the intersection of
the fixed locus $F^{k,d}_{1,0}$  with  $QG_{0, (g)|k} (Y, d)$.
We will prove the statement for each the summand
$$\II^Y_{d,k,g} := \wt{ev_1}_*\op{Res}_{F^{k,d}_{g,0}}(\bt^k)$$
of $\II^Y(q, \bt, z)$.

Let $I(g) \subset \{1, \ldots, r\}$ denote the set of indices such that $g$ acts trivially on $\CC_{\rho_i}$, i.e. $\rho_i(g) = 1.$ The twisted sector $Y_g$ is equal to the vanishing locus $\{x_i = 0\}_{i \notin I(g)} \subset Y$.  The
 map $QG_{0, (g)|k} (Y, d) \to \spec \left( \CC[x_i]_{\{1\leq i\leq r\}}^G\right)$ factors as
 $$QG_{0, (g)|k} (Y, d) \to Y_g \to \spec \left( \CC[x_i]^G_{i \in I(g)}\right) \to \spec \left( \CC[x_i]_{\{1\leq i\leq r\}}^G\right).$$

Let ${\hat I}^0_d(g) \subset {\hat I}$ denote the set of indices $i \in {\hat I} \cap I(g)$ such that $\langle d, \rho_i\rangle = 0$.
and let ${\hat I}^1_d(g) \subset {\hat I}$ denote the set of indices $i \in {\hat I}\cap I(g)$ such that $\langle d, \rho_i\rangle \neq 0$.  Using the explicit formula for $\II^Y$ given in \cite{CCKbig}, we observe that for $i \in {\hat I}^0_d(g)$, 
$$\partial_{\rho_i} \II^Y_{d,k,g} = \rho_i \II^Y_{d,k,g}.$$ Let $Z_{{\hat I}^0_d(g)} := \{x_i = 0\}_{i \in {\hat I}^0_d(g)} \subset IY$ and let $s_0\colon Z_{{\hat I}^0_d(g)} \to IY$ denote the inclusion.  
Consider the commutative diagram
\[
\begin{tikzcd}[cramped,row sep=tiny, column sep=0 cm]
F_{{\hat I}^0_d(g)} \ar[dd, "c_{{\hat I}^0_d(g)}"] \ar[dr] \ar[rr, "(\wt{ev_1}  \circ \iota_d)'"] & & Z_{{\hat I}^0_d(g)} \ar[dd] \ar[dr, shorten >=1.85ex, "s_0"] & \\
& F^{k,d}_{g,0} \ar[dd] \ar[rr, crossing over, near start, "\wt{ev_1} \circ \iota_d"] & & Y_g \ar[dd, "c_g"] \\
\spec \left( \CC[x_i]^G_{i \in I(g) \setminus {\hat I}}\right) \ar[dr] \ar[rr]  & & \spec \left( \CC[x_i]^G_{i \in I(g) \setminus {\hat I}^0_d(g)}\right) \ar[dr] \\
& \spec \left( \CC[x_i]^G_{i \in I(g) \setminus {\hat I}^1_d(g)}\right) \ar[rr] \ar[uu, leftarrow, crossing over]& &\spec \left( \CC[x_i]^G_{i \in I(g)}\right),
\end{tikzcd}
\]
where $F_{{\hat I}^0_d(g)}$ is the fiber product of $\wt{ev_1} \circ \iota_d$ and $s_0$.  Then
\begin{align} \label{e:properstring}
\left(\prod_{i \in {\hat I}^0_d(g)}\partial_{\rho_i}\right) \II^Y_{d,k,g} &= \left(\prod_{i \in {\hat I}^0_d(g)} \rho_i \right)\II^Y_{d,k,g} \\ \nonumber
&= {s_0}_*{s_0}^* \II^Y_{d,k,g}  
\\ \nonumber
&= {s_0}_*{s_0}^* (\wt{ev_1} \circ {\iota_d})_*\op{Res}_{F^{k,d}_{g,0}}(\bt^k)  \\ \nonumber
&= {s_0}_* (\wt{ev_1} \circ {\iota_d})'_* {s_0}^!  \op{Res}_{F^{k,d}_{g,0}}(\bt^k) . 
\end{align}
The map $c_{{\hat I}^0_d(g)}$ is seen to be proper because $\wt{ev_1} \circ {\iota_d}$, $s_0$, and $c_g$ are \cite[\href{https://stacks.math.columbia.edu/tag/01W6}{Lemma 01W6}]{stacks-project}.  And since $\spec \left( \CC[x_i]^G_{i \notin {\hat I}}\right)$ is assumed to be a point, so is $\spec \left( \CC[x_i]^G_{i \in I(g) \setminus {\hat I}}\right)$.  We conclude that  $F_{{\hat I}^0_d(g)}$ is proper.  

Finally, \eqref{e:properstring} gives us
\begin{align*}
\left(\prod_{i \in {\hat I}}z\partial_{\rho_i} \right) \II^Y_{d,k,g} & = z^{|{\hat I}|} \prod_{i \in {\hat I} \setminus {\hat I}^0_d(g)}(\partial_{\rho_i}) \prod_{i \in {\hat I}^0_d(g)}(\partial_{\rho_i}) \II^Y_{d,k,g} \\
& = z^{|{\hat I}|} \prod_{i \in {\hat I} \setminus {\hat I}^0_d(g)}(\partial_{\rho_i})  {s_0}_* (\wt{ev_1} \circ {\iota_d})'_* {s_0}^!  \op{Res}_{F^{k,d}_{g,0}}(\bt^k)  \\
&= {s_0}_*(\wt{ev_1} \circ {\iota_d})'_* {s_0}^!  \left(z^{|{\hat I}|} \prod_{i \in {\hat I} \setminus {\hat I}^0_d(g)}(\partial_{\rho_i}) \op{Res}_{F^{k,d}_{g,0}}(\bt^k) \right). 
\end{align*}

This exhibits $\left(\prod_{i \in {\hat I}}z\partial_{\rho_i} \right) \II^Y_{d,k,g} $
as the pushforward of a class on  the proper stack $F_{{\hat I}^0_d(g)}$.  
We conclude that $\left(\prod_{i \in {\hat I}}z\partial_{\rho_i} \right) \II^Y_{d,k,g} $ lies in $ H^*_{ct}(Y_g)[[q]][[t^i]][z, z^{-1}] $.
\end{proof}
%
%

\begin{theorem}[$I$-functions for toric GLSMs]\label{t:GLSMI}
Let ${\hat I} \subset \{1, \ldots, r\}$ be such that $ \CC[x_i]_{\{i \notin {\hat I}\}}^G = \CC$.  Let $\rho_{\hat I} := \prod_{i \in {\hat I}} \rho_i$.
Then 
$\sigma^w (z\partial_{\rho_{\hat I}} \II^Y)$ is a $(0^+, 0^+)$ GLSM $I$-function.
\end{theorem}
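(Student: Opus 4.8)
The plan is to deduce the theorem from the two results already established in this section: Theorem~\ref{t:dI}, which produces a $(0^+,0^+)$ GLSM $I$-function out of any compact-type class of the form $z\partial_\rho\II^Y$ with $\rho\in H^*([V/G])$, and Proposition~\ref{p:narrowI}, which guarantees compact-type support for the \emph{iterated} derivative $\left(\prod_{i\in\hat I}z\partial_{\rho_i}\right)\II^Y$ under exactly the hypothesis $\CC[x_i]_{\{i\notin\hat I\}}^G=\CC$. Since $\rho_{\hat I}=\prod_{i\in\hat I}\rho_i$ is a \emph{single} element of $H^*([V/G])$, the operator $z\partial_{\rho_{\hat I}}$ in the statement is one directional derivative, while Proposition~\ref{p:narrowI} concerns a composition of $|\hat I|$ derivatives. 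The heart of the proof is therefore the identity
\[
z\partial_{\rho_{\hat I}}\II^Y \;=\; \Bigl(\prod_{i\in\hat I}z\partial_{\rho_i}\Bigr)\II^Y .
\]

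To establish this I would work directly with the explicit formula \eqref{e:IY}. The entire $\bt$-dependence of $\II^Y$ sits in the exponential factor $\exp\bigl(\tfrac1z\sum_j t^j p_j(\eta_s+z\langle d,\eta_s\rangle)\bigr)$ of each degree-$d$ summand; the hypergeometric factor and the class $\ii_{g_d^{-1}}$ carry none. Differentiating in a direction $\rho$ means replacing $\bt$ by $\bt+s\rho$ and applying $z\frac{\partial}{\partial s}\big|_{s=0}$; this brings down, in the $d$-summand, the factor obtained by substituting $\eta_s\mapsto\eta_s+z\langle d,\eta_s\rangle$ into the polynomial representing $\rho$, the prefactor $z$ cancelling the $1/z$ in the exponent. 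For $\rho=\rho_{\hat I}$ this substitution is a ring homomorphism and each $\rho_i$ is linear in the $\eta_s$, so the resulting factor is $\prod_{i\in\hat I}(\rho_i+z\langle d,\rho_i\rangle)$ and
\[
z\partial_{\rho_{\hat I}}\II^Y=\sum_{d\in\eff(V,G,\theta)} q^d \prod_{i\in\hat I}\bigl(\rho_i+z\langle d,\rho_i\rangle\bigr)\exp\!\Bigl(\tfrac1z\textstyle\sum_j t^j p_j(\eta_s+z\langle d,\eta_s\rangle)\Bigr)(\cdots)\,\ii_{g_d^{-1}},
\]
where $(\cdots)$ is the hypergeometric factor. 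Applying the single derivatives $z\partial_{\rho_i}$ one at a time yields exactly the same expression, since each factor $(\rho_i+z\langle d,\rho_i\rangle)$ is $\bt$-independent and passes unchanged through subsequent differentiations (and again each step's prefactor $z$ cancels its own $1/z$). This proves the identity.

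With the identity in hand the conclusion is immediate: by Proposition~\ref{p:narrowI} the right-hand side, hence $z\partial_{\rho_{\hat I}}\II^Y$, lies in $H^*_{CR,ct}(Y)[[q]][[t^i]][z,z^{-1}]$, and Theorem~\ref{t:dI} applied with $\rho=\rho_{\hat I}$ then shows that $\sigma^w\bigl(z\partial_{\rho_{\hat I}}\II^Y\bigr)$ is a $(0^+,0^+)$ GLSM $I$-function. I expect the derivative identity to be the only genuine content; the points to watch are the bookkeeping of the powers of $z$ (one prefactor $z$ against one $1/z$ per differentiation, with no accumulation) and the fact that the substitution $\eta_s\mapsto\eta_s+z\langle d,\eta_s\rangle$ factors the product $\prod_{i\in\hat I}\rho_i$ as $\prod_{i\in\hat I}(\rho_i+z\langle d,\rho_i\rangle)$, which rests only on multiplicativity of the substitution and linearity of the pairing $\langle d,-\rangle$.
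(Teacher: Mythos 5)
Your proposal is correct and follows exactly the paper's route: the paper's proof consists of the observation $z\partial_{\rho_{\hat I}}\II^Y=\bigl(\prod_{i\in\hat I}z\partial_{\rho_i}\bigr)\II^Y$ (which you verify in detail, correctly, using linearity of each $\rho_i$ in the generators $\eta_s$ and the $z$ versus $1/z$ cancellation) followed by the same appeal to Proposition~\ref{p:narrowI} and Theorem~\ref{t:dI}.
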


\begin{proof}
From the explicit description of the $\II^Y$, we observe that $z\partial_{\rho_{\hat I}} \II^Y = \left(\prod_{i \in {\hat I}}z\partial_{\rho_i} \right) \II^Y$. 
 The theorem then follows immediately from Theorem~\ref{t:dI} and Proposition~\ref{p:narrowI}.
\end{proof}

Let $c_i$ denote the degree of $x_i$ with respect to the $\CC^*_R$-action.
A natural choice of ${\hat I}$ is given by ${\hat I} = \{i | c_i \neq 0\}$.  
In this case, $\CC[x_i]_{i \notin {\hat I}} = \left(\CC[x_i]_{1\leq i\leq r}\right)^{\CC^*_R}$, and $\CC[x_i]_{i \notin {\hat I}}^G = \left(\CC[x_i]^{\CC^*_R}_{1\leq i\leq r}\right)^G = \left(\CC[x_i]_{1\leq i\leq r}\right)^\Gamma$.
\begin{corollary}\label{c:Ifctn}
If $\left(\CC[x_i]_{1\leq i\leq r}\right)^\Gamma = \CC$, then 
\begin{align} \nonumber
\II^{(Y, w)}(\bt, q, z) :=& \sum_{d \in\eff(V, G, \theta)}q^d \exp \left(  \frac{1}{z} \sum_{j=1}^l t^j p_j\left(\eta_s + z\langle d, \eta_s\rangle\right) \right) \\
&\sigma^w\left( 
\frac{\prod_{i| c_i \neq 0,  \langle d, \rho_i\rangle\leq 0} \prod_{ \langle d, \rho_i\rangle \leq \nu\leq 0} (\rho_i + ( \langle d, \rho_i\rangle - \nu)z)}
{\prod_{i| c_i \neq 0, \langle d, \rho_i\rangle> 0} \prod_{ 0 < \nu < \langle d, \rho_i\rangle} (\rho_i + ( \langle d, \rho_i\rangle - \nu)z)}  \nonumber \right. \\
&\left. 
 \frac{\prod_{ i| c_i = 0, \langle d, \rho_i\rangle< 0} \prod_{ \langle d, \rho_i\rangle \leq \nu< 0} (\rho_i + ( \langle d, \rho_i\rangle - \nu)z)}
{\prod_{ i| c_i = 0, \langle d, \rho_i\rangle> 0} \prod_{ 0 \leq \nu < \langle d, \rho_i\rangle} (\rho_i + ( \langle d, \rho_i\rangle - \nu)z)} \ii_{g_d^{-1}} \right) \label{e:IYW}
\end{align}
is a GLSM $I$ function for $(Y, w)$.
\end{corollary}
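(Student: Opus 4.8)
The plan is to deduce this directly from Theorem~\ref{t:GLSMI} by making the explicit choice of index set and then computing the derivative $z\partial_{\rho_{\hat I}}\II^Y$ from formula~\eqref{e:IY}. First I would take ${\hat I} = \{i \mid c_i \neq 0\}$, the choice already singled out in the discussion preceding the corollary. As observed there, for this ${\hat I}$ one has $\CC[x_i]_{\{i \notin {\hat I}\}}^G = \left(\CC[x_i]_{1\leq i\leq r}\right)^\Gamma$, so the corollary's hypothesis $\left(\CC[x_i]_{1\leq i\leq r}\right)^\Gamma = \CC$ is exactly the hypothesis $\CC[x_i]_{\{i \notin {\hat I}\}}^G = \CC$ required by Theorem~\ref{t:GLSMI}. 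That theorem then immediately yields that $\sigma^w\left(z\partial_{\rho_{\hat I}}\II^Y\right)$ is a $(0^+,0^+)$ GLSM $I$-function. Consequently it remains only to identify $z\partial_{\rho_{\hat I}}\II^Y$ with the cohomology-valued expression appearing inside $\sigma^w$ in~\eqref{e:IYW}; applying $\sigma^w$ to both sides of that identification then finishes the proof.

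For the identification I would work summand by summand in~\eqref{e:IY}, writing $\II^Y = \sum_d \II^Y_d$ for its decomposition into degree-$d$ pieces. The operator $z\partial_{\rho_{\hat I}}$ acts on $\II^Y_d$ only through the exponential factor $\exp\!\left(\frac{1}{z}\sum_j t^j p_j(\eta_s + z\langle d, \eta_s\rangle)\right)$, since the $q^d$ and the product of linear factors are independent of $\bt$. Writing $\rho_{\hat I} = \prod_{i \in {\hat I}}\rho_i$ as the product polynomial $\prod_{i \in {\hat I}} p_{\rho_i}$ in the generators $\eta_s$, and using that the substitution $\eta_s \mapsto \eta_s + z\langle d, \eta_s\rangle$ (a ring homomorphism) sends each linear class $\rho_i$ to $\rho_i + z\langle d, \rho_i\rangle$, differentiation in the direction $\rho_{\hat I}$ multiplies $\II^Y_d$ by $\prod_{i \in {\hat I}}(\rho_i + z\langle d, \rho_i\rangle)$, the single outer $z$ cancelling the $\frac1z$ from the exponent:
\[ z\partial_{\rho_{\hat I}} \II^Y_d = \left( \prod_{i \in {\hat I}} (\rho_i + z\langle d, \rho_i\rangle) \right) \II^Y_d. \]

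The final and most hands-on step is to absorb this prefactor into the rational product in~\eqref{e:IY} and check it reproduces~\eqref{e:IYW}; the $q^d$ and exponential prefactors agree verbatim, so only the product over $i$ needs attention. The key remark is that $\rho_i + z\langle d, \rho_i\rangle$ is precisely the $\nu = 0$ term $(\rho_i + (\langle d, \rho_i\rangle - \nu)z)$. For $i \in {\hat I}$ there are three cases: if $\langle d, \rho_i\rangle < 0$ the multiplication appends the $\nu = 0$ factor to the numerator, enlarging its range from $\langle d, \rho_i\rangle \leq \nu < 0$ to $\langle d, \rho_i\rangle \leq \nu \leq 0$; if $\langle d, \rho_i\rangle = 0$ the formerly empty factor becomes $\rho_i$, which is exactly the $\nu = 0$ term of $\prod_{\langle d, \rho_i\rangle \leq \nu \leq 0}$; and if $\langle d, \rho_i\rangle > 0$ the factor cancels the $\nu = 0$ term of the denominator, shrinking its range from $0 \leq \nu < \langle d, \rho_i\rangle$ to $0 < \nu < \langle d, \rho_i\rangle$. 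The factors for $i \notin {\hat I}$ (that is, $c_i = 0$) are left untouched and reproduce the remaining two products of~\eqref{e:IYW} unchanged. These are exactly the four products of~\eqref{e:IYW}, so the summands match.

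The only genuine obstacle I anticipate is the bookkeeping in this last step: correctly tracking the product ranges across the three sign cases, and in particular confirming that the $\langle d, \rho_i\rangle = 0$ subcase for $c_i \neq 0$ is properly folded into the single numerator $\prod_{i\mid c_i \neq 0,\ \langle d, \rho_i\rangle \leq 0}$ of~\eqref{e:IYW}. Everything else is a formal consequence of Theorem~\ref{t:GLSMI}, the identification of invariant rings preceding the corollary, and the explicit shape of~\eqref{e:IY}.
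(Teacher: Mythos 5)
Your proposal is correct and follows exactly the paper's route: choose ${\hat I}=\{i\mid c_i\neq 0\}$, invoke Theorem~\ref{t:GLSMI} via the identification $\CC[x_i]_{\{i\notin{\hat I}\}}^G=\left(\CC[x_i]_{1\leq i\leq r}\right)^\Gamma$, and match $\sigma^w(z\partial_{\rho_{\hat I}}\II^Y)$ with \eqref{e:IYW}. The paper's own proof is a two-line citation of Theorem~\ref{t:GLSMI}; your explicit case analysis of how the $\nu=0$ factor enlarges, creates, or cancels terms in the products of \eqref{e:IY} is the bookkeeping the paper leaves implicit, and it checks out.
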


\begin{proof}
The function $\II^{(Y, w)}(\bt, q, z)$ of \eqref{e:IYW} is $\sigma^w (z\partial_{\rho_{\hat I}} \II^Y(\bt, q, z))$ for ${\hat I} = \{i | c_i \neq 0\}$.  The result is an application of Theorem~\ref{t:GLSMI}.
\end{proof}

\section{Examples and comparisons}\label{s:EXS}

In this section we give explicit formulas for $(0^+, 0^+)$ $I$-functions for particular GLSMs.  We show that for many GLSMs where mirror theorems have already been proven, the $(0^+, 0^+)$ $I$-functions which arise from Theorem~\ref{t:GLSMI} agree with previously computed $I$-functions, including in many instances where concavity/convexity fail.

\subsection{FJRW theory}
The first example of a GLSM is the case that $G$ is finite and $w\colon [\CC^n/ G] \to \CC$ is a quasihomogeneous polynomial with an isolated singularity at the origin.  These are sometimes called affine phase GLSMs.  The corresponding invariants are known as FJRW invariants and were defined by Fan--Jarvis--Ruan \cite{FJR1} prior to defining enumerative invariants for more general GLSMs.  FJRW mirror theorems have been proven in the case when $w$ is a Fermat polynomial \cite{ChR, ChIR, LPS} and in the case that $w$ is a chain polynomial and $ G$ is maximal \cite{Gu}.

Here we compute a $(0^+, 0^+)$ GLSM $I$-function for a GLSM representing the LG model $(Y, w)$ in the case when $Y = [\CC^n /  G]$ and $ G$ is a finite subgroup of $(\CC^*)^n$. Assume $w = w(x_1, \ldots, x_n)$ is homogeneous of degree $d_w$ with respect to a $\CC^*_R$-action, where $\CC^*_R$  acts with weight $c_i$ on $x_i$.  Under the assumption that $Z(dw)$ is proper over $\spec \CC$, $w$ has an isolated singularity at the origin of $\CC^n$.

 The first step is to present $Y$ as a GIT quotient by a torus:  $$Y = [\CC^{n+s} \sslash_\theta (\CC^*)^{s}].$$  The simplest case is when $G = \langle J \rangle$.  Let $\wt G = \CC^*$ act on $\wt V = \spec \CC[x_1, \ldots, x_n, p]$ with weights $(c_1, \ldots, c_n, -d_w)$.  Then $\wt w = p \cdot w$ is $\wt G$-invariant.  We extend the $\CC^*_R$-action from $\CC^n$ to $\wt V$ by letting $\CC^*_R$ act trivially on the $p$ coordinate.  Choose $\theta = (-1)$, then the GLSM $(\wt V, \wt G, \theta, \wt w)$ represents the LG model $(Y, w)$.
 
Let $\eta_{p}\colon \wt G \to \CC^*$ denote the homomorphism of weight $-d_w$, viewed as an element of $H^*([\wt V/\wt G])$.  Define $\bt = t \cdot \eta_{p}$ in the notation of \eqref{e:IY}.  Then the corresponding big $I$-function of \eqref{e:IY} is given by:
\begin{align*} 
\II^Y(\bt, q, z) = \sum_{k \geq 0} \frac{q^{k/d_w} e^{tk} \prod_{i=1}^n  \prod_{0<  \nu \leq c_i k/d_w} z(-c_i k/d_w + \nu) }{z^k k!} \ii_{e^{2 \pi i  k/d_w}}
\end{align*}
When $Y$ is a finite quotient of affine space, the map $\phi^w\colon \cc H_{ct}(Y, w) \to H^*_{CR,ct}(Y)$ is an isomorphism, so $\sigma^w  = (\phi^w)^{-1}$.  The class $\ii_{e^{2 \pi i  k/d_w}}$ is of compact type if and only if  $$\{i |  c_i k/d_w  \in \ZZ\} = \emptyset.$$ For such $k$, let  $\varphi_{k-1} = \sigma^w(\ii_{e^{2 \pi i  k/d_w}})$ (note the shift in index).

We observe that $\CC[x_1, \ldots, x_n]^G = \CC$.  Therefore, by Theorem \ref{t:GLSMI}, a $(0^+, 0^+)$ GLSM $I$-function is obtained as $$\II^{(Y, w)}(\bt, q, z) := \sigma^w (z\partial_{\eta_p} \II^Y)= \sigma^w(z \frac{\partial}{\partial t}\II^Y).$$  Here one observes that the coefficient of $q^{k/d_w}$ in $\II^Y(\bt, q, z)$ is zero if $k>0$ and $\{i |  c_i k/d_w  \in \ZZ\} \neq \emptyset$.  It follows that $z \frac{\partial}{\partial t}\II^Y$ is supported in $ H^*_{CR,ct}(Y)$, thus verifying Proposition~\ref{p:narrowI} directly in this case.  Explicitly, we have 
\begin{equation}\label{e:IFJRW} \II^{(Y, w)}(\bt, q, z) = \sum_{\substack{ k \geq 1 \\ \{i |  c_i k/d_w  \in \ZZ\} = \emptyset}} \frac{q^{k/d_w} e^{tk} \prod_{i=1}^n  \prod_{0<  \nu \leq c_i k/d_w} z(-c_i k/d_w + \nu) }{z^{k-1} (k-1)!}  \varphi_{k-1}.
\end{equation}
This function agrees with previously computed FJRW $I$-functions, for instance with the non-equivariant limit of \cite[equation (40)]{ChIR} up to the substitution $u = -q^{1/d_w}e^t$ and a factor of $-z$.  In the case that $w$ is a \emph{Fermat} polynomial, the mirror theorem of Chiodo--Iritani--Ruan \cite[Theorem~3.10]{ChIR} implies that \eqref{e:IFJRW} is equal to the FJRW $J$-function after a change of variables and a scaling.  From our perspective, this may be interpreted as an $\epsilon$-wall crossing result from $(0^+, 0^+)$-stability to $(\infty, \infty)$-stability for the case that $w$ is Fermat.  We emphasize, however, that \eqref{e:IFJRW} is a generating function of $(0^+, 0^+)$-stable GLSM invariants for \emph{any} choice of quasihomogeneous $w$ with isolated singularity at the origin.

For a more general finite group $G$, write $G$ as a product of cyclic groups, $G = \prod_{j=1}^s \mu_{r_j}$.  Choose generators $\omega_j \in \mu_{r_j}$ and suppose 
$$\omega_j \cdot (x_1, \ldots, x_n) = (e^{2 \pi i c_{1j}/r_j} x_1, \ldots , e^{2 \pi i c_{nj}/r_j} x_n).$$ For notational simplicity let us assume that $\omega_1 = J$, so $r_1 = d_w$.  \footnote{This assumption sacrifices some generality but simplifies the formulas and is sufficient for the purpose of comparison with previous works.  We leave the details of the most general case to the interested reader.}

Define $\wt V = \spec \CC[x_1, \ldots, x_n, p_1, \ldots, p_s]$ by $\wt G = (\CC^*)^s$, where the $j$th factor of $\CC^*$ acts on $\wt V$ with weights $$(c_{1j}, \ldots, c_{nj}, 0 \ldots,  -r_j,  \ldots, 0),$$ where the term $-r_j$ occurs in the $n+j$th coordinate.
Then $[\CC^n /G]$ may be expressed as a GIT stack quotient $[\wt V \sslash_\theta  \wt G]$, where 
$$\theta (\lambda_1, \ldots, \lambda_s) = \left( \prod_{j=1}^s \lambda_j \right)^{-1}.$$  If we define $\wt w$ to be $p_1\cdots p_s w(x_1, \ldots, x_n)$, then $(\wt V, \wt G, \theta, \wt w)$ is a GLSM representing the LG model $(Y, w)$.

Define $\eta_{p_1}\colon \wt G \to \CC^*$ by $\eta_{p_1} (\lambda_1, \ldots, \lambda_s) = \lambda_1^{-r_1}$ and let 
$\bt = t \cdot \eta_{p_1} \in H^*([\wt V/\wt G])$.  In this case the big $I$-function of \cite{CKbig, CCKbig} (as well as the ``S-extended stacky $I$-function'' of \cite{CCIT2}) is:
\begin{align}
\nonumber &\II^Y(\bt, q, z) \\ =& \nonumber
 \sum_{d_1, \ldots, d_s \geq 0} \frac{ e^{td_1} \prod_{j=1}^s q_j^{d_j/r_j}}{ \prod_{j=1}^sd_j!  z^{d_j}} \left( \prod_{i=1}^n \prod_{0< \nu \leq \sum_{j=1}^s c_{ij} d_j/r_j } z( - \sum_{j=1}^s c_{ij} d_j/r_j + \nu)\right) \ii_{J^{d_1} \prod_{j=2}^s \omega_j^{d_j}}.
\end{align}
As before we note that for $(d_1, \ldots, d_s) \neq (0, \ldots, 0)$, the coefficient of $\prod_{j=1}^s q_j^{d_j}$ is zero unless 
$\{i | \sum_{j=1}^s c_{ij} d_j/r_j \in \ZZ \} = \emptyset$, i.e., unless $ \ii_{J^{d_1} \prod_{j=2}^s \omega_j^{d_j}} \in H^*_{CR, ct}(Y)$.  Therefore $z \frac{\partial}{\partial t} \II^Y(q, \bt, z) $ lies in compact type cohomology.

Then by Theorem~\ref{t:dI}, 
a $(0^+, 0^+)$ GLSM $I$-function  is given by $\sigma^w( z \frac{\partial}{\partial t} \II^Y(q, \bt, z) )$:
\begin{align} \nonumber
\II^{(Y, w)}(\bt, q, z) & = 
 \sum_{\substack{ d_1\geq 1 \\ d_2, \ldots, d_s \geq 0 \\ \{i | \sum_{j=1}^s c_{ij} d_j/r_j \in \ZZ \} = \emptyset}} \left( \frac{ 
  e^{td_1} \prod_{j=1}^s q_j^{d_j/r_j}}{ (d_1-1)!z^{d_1 - 1} \prod_{j=2}^s d_j! z^{d_j}} \right.\\
 & \left. \prod_{i=1}^n \prod_{0< \nu \leq \sum_{j=1}^s c_{ij} d_j/r_j } z( - \sum_{j=1}^s c_{ij} d_j/r_j + \nu)\right) \sigma^w\left(  \ii_{J^{d_1} \prod_{j=2}^s \omega_j^{d_j}}\right). \label{e:IGu}
\end{align}
Again this formula agrees with FJRW $I$-functions previously computed in special cases (again up to a simple change of variables and scaling).  For $w$ a Fermat polynomial but $G$ no longer necessarily cyclic, \eqref{e:IGu} appears in \cite{AS1} (based on the work in \cite{LPS}).  
The restriction of \eqref{e:IGu} to $q_2 = \cdots = q_s = 0$ recovers the FJRW $I$-function for \emph{chain} polynomials with $G = G_{max}$ \emph{maximal} as computed by Gu\'er\'e in \cite[Equation~(98)]{Gu}. \footnote{There is a very minor typo in \cite[Equation~(98)]{Gu}, which we have confirmed with the author.  There, the symbol $\delta_j$ should be defined as 
$\delta_j := - \delta_{\left\{ \substack{ N-j \text{ is odd,} \\ \bq_j k \in \ZZ }\right\}}.$  It is with this modification that \eqref{e:IGu} agrees with \cite[Equation~(98)]{Gu}.}

\subsection{Complete intersections}
Let $X$ be a smooth orbifold with projective coarse moduli space arising as a toric GIT stack quotient $X = [V_1 \sslash_\theta G]$.  An $I$-function for $X$, $\II^X(\bt, q, z)$, was given in \eqref{e:IY}.  Choose characters $\tau_j \in \widehat G$ for $1 \leq i \leq n$, and $G$-equivariant functions $f_j(\underline x)\colon V_1 \to \CC_{\tau_j}$.  Assume that the intersection of  $Z(f_1, \ldots, f_n)  \subset V$ with the $\theta$-semistable locus $V_1^{ss}$ is smooth of codimension $n$.  Let $Z$ denote the corresponding complete intersection $[Z (f_1, \ldots, f_n) \cap V_1^{ss} / G]$ in $X$.  Let $L_{\tau_j} \to X$ denote the line bundle associated to the character $\tau_j$.  The quantum Lefschetz hyperplane theorem \cite{KKP, CKM} allows one to obtain an $I$-function for $Z$ from an $I$-function for the ambient space $X$ whenever the line bundles $L_{\tau_j}$ are each \emph{convex}.  If $X$ is a smooth variety, $L_{\tau_j}$ is convex whenever it is \emph{semi-positive}, however this is no longer the case when $X$ is an orbifold.  This failure of convexity has presented a significant obstacle in proving a  quantum Lefschetz statement for orbifold complete intersections \cite{CGIJJM}, however such a statement has recently been proven by Wang \cite{Wang} when $X$ is toric.

As described by Witten \cite{Wi2}, one can associate to the above data a GLSM, $(V, G, \theta, w)$.  Let $V_2 = \spec \CC[p_1, \ldots, p_n]$ be the $n$-dimensional representation of $G$ such that the action of $G$  on the $j$th factor is given by the dual of $\tau_j$.  Let $V = V_1 \times V_2$ and define a potential $w\colon V \to \CC$ by
$$w = \sum_{j=1}^n p_j \cdot f_j(\underline x).$$
By construction $w$ is $G$-invariant.  Let $\CC^*_R$-act on $V$ with weight $0$ on $V_1$ and weight $1$ on $V_2$.  Then $w$ is homogeneous of degree $1$.  Assume that each $L_{\tau_j}$ is semipositive in the sense of \cite[Definition~2.6]{Wang}, and that $V^{ss} = V_1^{ss} \times V_2$, so that $[V \sslash_\theta G] \to [V_1 \sslash_\theta G]$ is a vector bundle.  Then $(V, G, \theta, w)$ is a GLSM which is expected to correspond in some sense to the complete intersection $Z$, in particular the GLSM invariants of $(V, G, \theta, w)$ should agree with the Gromov--Witten/quasimap invariants of $Z$ up to a sign. 

Let $Y = [V \sslash_\theta G]$.  
For $g \in G$ and $\tau_j$ a character, define $\iota_g(\tau_j)$ to be the rational number such that for $l \in \CC_{\tau_j}$, $$g \cdot l = e^{2 \pi i \iota_g(\tau_j)}l$$ and $0 \leq \iota_g(\tau_j) <1$.
By \cite[Lemma~2.14]{HS}, the compact type Gromov--Witten state space $H^*_{CR,ct}(Y)$ is equal to $\op{im}(i_*)$ where $i\colon IX \to IY$ is the inclusion of the zero section, and 
is generated as an $H^*(Y)$-module by classes of the form $e(E^\vee_g) \ii_g$, where $E_g$ denotes the (pullback to $IY$ of the) direct sum 
$$ \bigoplus_{\{j| \iota_g(\tau_j) = 0\}} L_{\tau_j}.$$

Let $j\colon IZ \to IX$ denote the closed immersion of inertia stacks.  Assume that the Chen--Ruan Poincar\'e pairing on the \emph{ambient cohomology}
$$H^*_{CR, amb}(Z) := \op{im}(j^*)$$
is nondegenerate.  This is equivalent to the condition that $H^{\op{even}}(IZ) = \op{ker}(j_*) \oplus \op{im}(j^*)$ \cite{IMM}, and holds, for instance, if each line bundle $L_{\tau_j}$ is ample.  

Under the above assumption, by \cite[Lemmas~6.5, 6.7]{HS} there exists an isomorphism 
$$\wt \Delta\colon H^*_{CR,ct}(Y) \to H^*_{CR, amb}(Z)$$
characterized by the fact that for $\alpha \in H^*(X_g) \subset H^*(IX)$, 
$$\wt \Delta (i_*(\alpha)) = e^{\pi i \sum_{j=1}^n \iota_g(\tau_j)} j^*(\alpha).$$
In this case, the composition $$\wt \Delta \circ \phi^w\colon \cc H_{ct}(Y, w) \to H^*_{CR, amb}(Z)$$ is an isomorphism when restricted to 
$ \cc R_c(Y, w)$.  We will use this state space identification to compare the GLSM $I$-functions obtained by Corollary~\ref{c:Ifctn} with the $I$-functions computed by Wang in \cite[Section 3.1]{Wang}.  

Begin with the $I$-function for $Y$ from \eqref{e:IY}.  One easily checks that 
\begin{align}\label{IYX}
\II^Y(\bt, q, z) =& \sum_{d \in\eff(V, G, \theta)}q^d \II^X_d(\bt, z)
\prod_{j=1}^n  \prod_{ 0< \nu \leq \langle d, \tau_j \rangle } (-\tau_j + ( - \langle d, \tau_j \rangle + \nu)z) \ii_{g_d^{-1}}
\end{align}
where $\II^X_d(\bt, z)$ is the factor in front of $ \ii_{g_d^{-1}}$ in the $q^d$ coefficient of $\II^X(\bt, q, z)$.
By Corollary~\ref{c:Ifctn}, the expression $\II^{(Y, w)}(\bt, q, z) :=
\sigma^w\left(\prod_{j=1}^n (z\partial_{\tau_j})\II^Y(\bt, q, z)\right)$ is an $I$-function for the GLSM $(Y, w)$.

After applying $\prod_{j=1}^n (z\partial_{\tau_j})$ to \eqref{IYX} we obtain a more explicit expression
\begin{align*}\II^{(Y, w)}(\bt, q, z) =&
\sigma^w\left(\sum_{d \in\eff(V, G, \theta)}q^d \II^X_d(\bt, z)
\prod_{j=1}^n  \prod_{ 0\leq \nu \leq \langle d, \tau_j \rangle } (-\tau_j + ( - \langle d, \tau_j \rangle + \nu)z) \ii_{g_d^{-1}}\right) \\
=& \sigma^w\left(\sum_{d \in\eff(V, G, \theta)}q^d \II^X_d(\bt, z)
\prod_{j=1}^n  \prod_{ 0\leq \nu < \langle d, \tau_j \rangle } (-\tau_j + ( - \langle d, \tau_j \rangle + \nu)z) e(E_{g_d^{-1}}^\vee) \ii_{g_d^{-1}}\right) 
\end{align*}
Pulling out the negative signs from each factor, the product $\prod_{ 0\leq \nu < \langle d, \tau_j \rangle } (-\tau_j + ( - \langle d, \tau_j \rangle + \nu)z)$ may be written as
\begin{align*}
 &
\prod_{j=1}^n  \prod_{ 0\leq \nu < \langle d, \tau_j \rangle } (-1)^{\lceil  \langle d, \tau_j \rangle  \rceil } (\tau_j + ( \langle d, \tau_j \rangle - \nu)z)  \\
=& 
\prod_{j=1}^n  \prod_{ 0\leq \nu < \langle d, \tau_j \rangle } e^{-\pi i (  \langle d, \tau_j \rangle   + \iota_{g_d^{-1}}( \tau_j)) } (\tau_j + ( \langle d, \tau_j \rangle - \nu)z) 
\end{align*}
After applying the change of variables $q^d \mapsto q^d e^{\pi i \sum_j\langle d, \tau_j \rangle}$ and the linear map $\wt \Delta \circ \phi^w$,
 we recover
  \begin{align*}&\wt \Delta \circ \phi^w \left(\II^{(Y, w)}(\bt, q, z)\right)|_{q^d \mapsto q^d e^{\pi i \sum_j \langle d, \tau_j \rangle}} \\
  =& 
j^* \circ i^* \left(\sum_{d \in\eff(V, G, \theta)}q^d \II^X_d(\bt, z)
\prod_{j=1}^n  \prod_{ 0\leq \nu < \langle d, \tau_j \rangle }  (\tau_j + ( \langle d, \tau_j \rangle - \nu)z) 
 \ii_{g_d^{-1}}\right).
 \end{align*}
This coincides exactly with the $I$-function for $Z$ obtained by Wang in \cite[Section 3.1]{Wang}.  Thus $\wt \Delta \circ \phi^w|_{\cc R_c(Y, w)}$ identifies $\II^{(Y, w)}(\bt, q, z)$ with the $I$-function for $Z$ after the change of variables $q^d \mapsto q^d e^{\pi i \sum_j\langle d, \tau_j \rangle}$.

\subsection{Hybrid models}
Going beyond Gromov--Witten and FJRW theory, one of the few examples of a class of GLSM where mirror theorems have been proven are so-called \emph{hybrid models}, which simultaneously generalize both the Gromov--Witten theory of hypersurfaces and FJRW theory.  These GLSMs arise when one starts with a GLSM representing a complete intersection as in the previous section, but chooses a new stability condition $\theta_-$.  These were computed by Clader--Ross in \cite{ClRo, CRwall}.

More precisely, let $G = \CC^*$ act on $V = \spec \CC[x_1, \ldots, x_m, p_1, \ldots, p_n] \cong \CC^{m + n}$ with weights $(w_1, \ldots, w_m, -d_1, \ldots, -d_n)$.  Choose polynomials $f_1(\underline x), \ldots, f_n(\underline x)$ such that 
$$Z(f_1, \ldots, f_n) \subset \PP(w_1, \ldots, w_m)$$ 
is a smooth complete intersection.  Define $\theta_{+/-} = (+/- 1) \in \widehat G \cong \ZZ$ and let $w = \sum_{j=1}^n p_j f_j(\underline x)$.  If we let $\CC^*_R$ act with weight 0 on $x_i$ and 1 on $p_j$ for each $1 \leq i \leq m$, $1 \leq j \leq n$, then $w$ is homogeneous of degree 1.
The GLSM $(V, G, \theta_+, w)$ represents the complete intersection $Z(f_1, \ldots, f_n)$ as evidenced above.  The GLSM $(V, G, \theta_-, w)$ is called a hybrid model.  
In this case,
$$Y := [V \sslash_{\theta_-} G] = \bigoplus_{i=1}^m \cc O_{\PP(d_1, \ldots, d_n)}(-w_i).$$

Let $H$ denote the class $c_1(\cc O_{\PP(d_1, \ldots, d_n)}(1))$ supported on the untwisted sector $H^*(Y) \subset H^*_{CR}(Y)$ (here we use  $\cc O_{\PP(d_1, \ldots, d_n)}(1)$ to denote the line bundle $[(V \times \CC_{\theta_-}) \sslash_{\theta_-} G]$ corresponding to character of $\widehat G$ with weight $-1$).  
Let $\eta_{p_j}(\lambda) = \lambda^{-d_j}$ and let $\bt = \sum_{j=1}^n t_j \eta_{p_j} \in H^*([V/G])$.  Let $d = \op{lcm}(d_1, \ldots, d_n)$.
Then the big $I$-function of $Y$ is given by
\begin{align}
\II^Y(\bt, q, z) &=
 \sum_{k \geq 0} q^{k/d}\left( \prod_{j=1}^n e^{t_j(d_j  H/z + d_jk/d)}  \right. \nonumber \\
 &\left. \frac{\prod_{i=1}^m \prod_{0< \nu \leq w_i k/d} \left( -w_i H + ( - w_i k/d + \nu)z\right)} {\prod_{j=1}^n \prod_{0\leq \nu < d_jk/d}. \left( d_j H + (d_jk/d - \nu) z\right)}  \ii_{e^{2 \pi i k/d}}\right).
\end{align}
Because $\CC[x_1, \ldots, x_m]^G = \CC$, the function $$\left(\prod_{j=1}^n z \partial_{\eta_j}\right) \II^Y(\bt, q, z) = \left(\prod_{j=1}^n z \frac{\partial}{\partial t_j} \right) \II^Y(\bt, q, z)$$ lies in $H^*_{CR, ct}(Y)$.  Therefore an $I$-function for $(V, G, \theta_-, w)$ is obtained as  
\begin{align}
\II^{(Y, w)}(\bt, q, z) &:= \sigma^w\left( \left(\prod_{j=1}^n z \frac{\partial}{\partial t_j} \right) \II^Y(\bt, q, z)\right) \nonumber \\
&= \sum_{k \geq 0} q^{k/d}\sigma^w \left( \prod_{j=1}^n e^{t_j(d_j  H/z + d_jk/d)}  \right. \nonumber \\
 &\left. \frac{\prod_{i=1}^m \prod_{0< \nu \leq w_i k/d} \left( -w_i H + ( - w_i k/d + \nu)z\right)} {\prod_{j=1}^n \prod_{0< \nu < d_jk/d}. \left( d_j H + (d_jk/d - \nu) z\right)}  \ii_{e^{2 \pi i k/d}}\right). \label{e:ICR}
\end{align}
This recovers the hybrid-model $I$-function computed by Clader--Ross in \cite{ClRo, CRwall}.  Their $I$-function appears, for instance, as $I^{X_-, W}$ in \cite[Section~7.4]{ClRo}, in the proof of Lemma~7.6.  Specializing $t_1 = \cdots = t_n = 0$ in \eqref{e:ICR} recovers this function.\footnote{There is a very minor typo in the formula for $I^{X_-, W}$ in \cite{ClRo}, the factors $(-bz + w_i H)$ should in fact be $(-bz - w_i H)$.}

There are two assumptions, denoted (A1) and (A2) required for the mirror theorem in \cite{ClRo}.  The first implies a version of concavity for the hybrid model invariants, the second guarantees that the $I$-function is supported in the \emph{narrow} cohomology (a stronger condition than being compact type).  However equation~\eqref{e:ICR} gives a $(0^+, 0^+)$ $I$-function regardless of these assumptions. 

\begin{acknowledgements}
I would like to thank H. Fan, T. Jarvis, and Y. Ruan for explaining the gauged linear sigma model and patiently answering many questions.  Several ideas in this paper are inspired by collaboration with I. Ciocan--Fontanine, J. Gu\'er\'e, D. Favero, and B. Kim. I am very grateful to them for all that I learned during our time working together.  I  also thank K. Aleshkin, E. Clader, J. Knapp, Y.-P. Lee, M. Liu, N. Priddis, M. Romo, D. Ross,  Y. Shen, and G. Xu for helpful conversations on GLSMs and FJRW theory.  Finally, I thank the anonymous referees for their careful reading of the manuscript and many valuable comments.
\end{acknowledgements}

\bibliographystyle{amsalpha}
\bibliography{references2}

\end{document}